\theoremstyle{plain}
\newtheorem{theorem}[equation]{Theorem}
\newtheorem{lemma}[equation]{Lemma}
\theoremstyle{remark}
\newtheorem{remark}[equation]{Remark}
\newtheorem{example}[equation]{Example}
\theoremstyle{definition}
\newtheorem{definition}[equation]{Definition}
\numberwithin{equation}{section}
\newcommand{\N}{\mathbb{N}}
\newcommand{\Z}{\mathbb{Z}}
\newcommand{\R}{\mathbb{R}}
\newcommand{\dd}{\mathscr{D}}
\newcommand{\car}{\mathscr{C}(\mu)}
\DeclareSymbolFont{cyrletters}{OT2}{wncyr}{m}{n}
\DeclareMathSymbol{\Sha}{\mathalpha}{cyrletters}{"58}
\renewcommand{\emptyset}{\mbox{\textup{\O}}}
\DeclareMathOperator{\supp}{supp}
\DeclareMathOperator{\sgn}{sgn}
\newcommand{\BMO}{{\rm BMO}}
\begin{document}

\allowdisplaybreaks

\title[Dyadic harmonic analysis beyond doubling measures]{Dyadic harmonic analysis
\\
beyond doubling measures}

\author[L.D. L\'opez-S\'anchez]{Luis Daniel L\'opez-S\'anchez}
\address{Luis Daniel L\'opez-S\'anchez\\
Instituto de Ciencias Matem\'aticas CSIC-UAM-UC3M-UCM\\
Consejo Superior de Investigaciones Cient{\'\i}ficas\\
C/ Nicol\'as Ca\-brera, 13-15\\
E-28049 Madrid, Spain}

\email{luisd.lopez@icmat.es}

\author[J.M. Martell]{Jos\'e Mar{\'\i}a Martell}

\address{Jos\'e Mar{\'\i}a Martell\\
Instituto de Ciencias Matem\'aticas CSIC-UAM-UC3M-UCM\\
Consejo Superior de Investigaciones Cient{\'\i}ficas\\
C/ Nicol\'as Cabrera, 13-15\\
E-28049 Madrid, Spain}

\email{chema.martell@icmat.es}

\author[J. Parcet]{Javier Parcet}

\address{Javier Parcet\\
Instituto de Ciencias Matem\'aticas CSIC-UAM-UC3M-UCM\\
Consejo Superior de Investigaciones Cient{\'\i}ficas\\
C/ Nicol\'as Cabrera, 13-15\\
E-28049 Madrid, Spain}
\email{javier.parcet@icmat.es}

\

\thanks{The authors are grateful to Jos\'e M. Conde, David Cruz-Uribe, Cristina Pereyra and Carlos P\'erez for discussions related to this paper.
Supported in part by ERC Grant StG-256997-CZOSQP, by MINECO Spanish Grant MTM-2010-16518 and by ICMAT Severo Ochoa project SEV-2011-0087.}

\date{\today}

\date{November 27, 2012. \textit{Revised}: \today}

\subjclass[2010]{42B20, 42B25, 42C40, 42C10}

\keywords{Dyadic cubes, dyadic Hilbert transform, dyadic paraproducts, generalized Haar systems, Haar shift operators, non-doubling measures, Calder\'on-Zygmund decomposition.}

\begin{abstract}
We characterize the Borel measures $\mu$ on $\R$ for which the associated dyadic Hilbert transform, or its adjoint, is of weak-type $(1,1)$ and/or strong-type $(p,p)$ with respect to $\mu$. Surprisingly, the class of such measures is strictly bigger than the traditional class of dyadically doubling measures and strictly smaller than the whole Borel class. In higher dimensions, we provide a complete characterization of the weak-type $(1,1)$ for arbitrary Haar shift operators, cancellative or not, written in terms of two generalized Haar systems and these include the dyadic paraproducts. Our main tool is a new Calder\'on-Zygmund decomposition valid for arbitrary Borel measures which is of independent interest.
\end{abstract}

\maketitle


\section{Introduction}

Dyadic techniques are nowadays fundamental in harmonic analysis. Their origin goes back to Hardy, Littlewood, Paley and Walsh among others. In the context of martingale inequalities, the dyadic maximal and square functions arise as particular cases of Doob's maximal function and Burkholder's square function for martingales associated to a dyadic filtration. Similarly, singular integral operators have been traditionally modeled by martingale transforms or martingale paraproducts. These last operators can be written in terms of martingale differences and conditional expectations, so that the full strength of probability methods applies in the analysis of their boundedness properties. In the Euclidean setting, dyadic martingale differences decompose as a sum of Haar functions and therefore we can obtain expansions using the classical Haar system.

In the last years dyadic operators have attracted a lot of attention related to the so-called $A_2$-conjecture. This seeks to establish that some operators obey an $L^2(w)$ estimate for every $w\in A_2$ with a constant that grows linearly in the $A_2$-characteristic of $w$. For the maximal function this was proved by Buckley \cite{buckley1993}. In \cite{wittwer2000}, Wittwer proved the $A_2$-conjecture for Haar multipliers in one dimension. The Beurling-Ahlfors transform, the Hilbert transform and the Riesz transforms were then considered by Petermichl and
Volberg in \cite{petermichl-volberg2002}, \cite{petermichl2007}, \cite{petermichl2008} (see also \cite{dragicevic-volberg2003}) and the $A_2$-conjecture for them was shown via the representation of these operators as averages of
Haar multipliers and certain dyadic operators called Haar shifts.  Paraproducts were treated in \cite{beznosova}, and with a different approach in \cite{cruzuribe-martell-perez2012}. The final solution to the $A_2$-conjecture for general Calder\'on-Zygmund operators was obtained by Hyt\"onen in his celebrated paper \cite{hytonen2012}. Again, a key ingredient in the proof is that Calder\'on-Zygmund operators can be written as averages of dyadic operators including Haar shift operators, dyadic paraproducts and their adjoints.

The \emph{dyadic Hilbert transform} is given by
\begin{equation*}
H_\dd f(x) = \sum_{I \in \mathscr{D}} \langle f,h_I \rangle\bigl(h_{I_-}(x) - h_{I_+}(x)\bigr).
\end{equation*}
Here $\dd$ denotes some dyadic grid in $\R$ and $h_I$ is the Haar function associated with $I\in\dd$: $h_I=|I|^{-1/2}(1_{I_-}-1_{I_+})$ where $I_-$ and $I_+$ are the left and right dyadic children of $I$. The importance of this operator comes from the fact that the classical Hilbert transform can be obtained via averaging $H_\dd$ over dyadic grids, this was shown by Petermichl \cite{petermichl2000}. That $H_\dd$ is bounded on $L^2(\R)$ follows easily from the orthogonality of the Haar system. Using the standard Calder\'on-Zygmund decomposition one can easily obtain (see for instance \cite{cruzuribe-martell-perez2012}) that $H_\dd$ is of weak-type $(1,1)$ and therefore bounded on $L^p(\R)$ for $1<p<2$. The bounds for $p>2$ can be derived by duality and interpolation from the weak-type $(1,1)$ of the adjoint operator.

Let us consider a Borel measure $\mu$ in $\R$. One can define a Haar system in a similar manner which is now orthonormal in $L^2(\mu)$. Hence, we may consider a dyadic Hilbert transform which we momentarily denote by $H_\dd^\mu$ and ask about its boundedness properties. The boundedness on $L^2(\mu)$ is again automatic by orthogonality.  The standard Calder\'on-Zygmund theory can be easily extended to settings where the underlying measure is doubling. In the present situation, since the operator is dyadic, one could even relax that condition and assume that $\mu$ is dyadically doubling. In such a case, we can almost copy verbatim the standard proof and conclude the weak-type $(1,1)$ (with respect to $\mu$) and therefore obtain the same bounds as before. Suppose next that the measure $\mu$ is not dyadically doubling, and we would like to find the class of measures $\mu$ for which $H_\dd^\mu$ maps continuously $L^1(\mu)$ into $L^{1,\infty}(\mu)$.  Characterizing the class of measures for which a given operator is bounded is in general a hard problem. For instance,  that is the case for the $L^2$ boundedness of the Cauchy integral operator in the plane and the class of linear growth measures obtained by Tolsa \cite{tolsa1999}. This led to non-standard Calder\'on-Zygmund theories (where $\mu$ has some polynomial growth \textit{\'a la} Nazarov-Treil-Volberg and Tolsa) that one could try to apply in the present situation. This would probably require some extra (and \textit{a posteriori} unnecessary) assumptions on $\mu$. On the other hand, let us recall that $H_\dd^\mu$ is a dyadic operator. Sometimes dyadic operators behave well even without assuming doubling: the dyadic Hardy-Littlewood maximal function and the dyadic square function are of weak-type $(1,1)$ for general Borel measures $\mu$,  see respectively \cite{doob} and \cite{burkholder}. In view of that, one could be tempted to conjecture that $H_\dd^\mu$ is of weak-type $(1,1)$ for general measures $\mu$ without assuming any further doubling property (or polynomial growth). One could also ask the same questions for some other dyadic operators: the adjoint of the dyadic Hilbert transform, (cancellative) Haar shift operators, dyadic paraproducts or their adjoints or, more in general, non-cancellative Haar shift operators (we give the precise definitions of these objects below). This motivates one of the main questions we address in this paper:
\begin{quote}
\em Determine the family of measures $\mu$ for which a given dyadic operator (e.g., the dyadic Hilbert transform or its adjoint, a dyadic paraproduct or its adjoint, a cancellative or non-cancellative Haar shift operator)  maps continuously $L^1(\mu)$ into $L^{1,\infty}(\mu)$.
\end{quote}
We know already that if $\mu$ is dyadically doubling these operators satisfy weak-type estimates by a straightforward use of the standard Calder\'on-Zygmund theory. Therefore, it is natural to wonder whether the doubling condition is necessary or it is just convenient. As we will see along this paper there is no universal answer to that question for all the previous operators: the class of measures depends heavily on the operator in question. Let us illustrate this phenomenon with some examples:

\begin{list}{$\bullet$}{\leftmargin=0.4cm\labelwidth=.4cm  \itemsep=0.2cm}
\item \textit{Dyadic paraproducts and $1$-dimensional  Haar multipliers.} We shall see in Theorems \ref{thm:Hil}, \ref{thm:HS-general} and \ref{theor:parap} that these operators are of weak-type $(1,1)$ for every locally finite Borel measure.

\item \textit{The dyadic Hilbert transform and its adjoint.} We shall prove in Theorem \ref{thm:Hil} that each operator gives rise to a family of measures governing the corresponding weak-type $(1,1)$. In Section \ref{section:examples} we shall provide some examples of measures, showing that the two classes (the one for the dyadic Hilbert transform and the one for its adjoint) are different and none of them is contained in the other. Further, the class of dyadically doubling measures is strictly contained in the intersection of the two classes.

\item \textit{Adjoints of dyadic paraproducts.} We shall obtain in Theorem \ref{theor:parap} that the weak-type $(1,1)$ of these operators leads naturally to the  dyadically doubling condition for $\mu$.

\end{list}


Besides these examples, our main results will answer the question above providing a characterization of the measures for which any of the previous operators is of weak-type $(1,1)$. It should be pointed out that the proof of such results are relatively simple, once we have obtained the appropriate Calder\'on-Zygmund decomposition valid for general measures. We propose a new Calder\'on-Zygmund decomposition, interesting on its own right, with a new good part which will be still higher integrable. We need to do this, since the usual ``good part'' in the classical Calder\'on-Zygmund decomposition is no longer good in a general situation: the $L^\infty$ bound (or even any higher integrability) is ruined by the fact that the average of $f$ on a given maximal cube cannot be bounded unless the measure is assumed to be doubling or dyadically doubling. This new good part leads to an additional bad term that needs to be controlled. More precisely, fixed $\lambda>0$, let $\{Q_j\}_j$ be the corresponding family of maximal  dyadic cubes (maximal with respect to the property that $\langle |f|\rangle_Q>\lambda$, see below for notation). Then we write $f=g+b+\beta$ where
\begin{list}{$\bullet$}{\leftmargin=0.4cm\labelwidth=.4cm  \itemsep=0.2cm}
\item $g\in L^p(\mu)$ for every  $1\le p<\infty $ with
$$
\|g\|_{L^p(\mu)}\le C_p\,\lambda^{p-1}\,\|f\|_{L^1(\mu)};
$$

\item $b=\sum_j b_j$, with
$$\supp(b_j) \subset Q_j, \quad \int_{\R^d}b_j(x) \,d\mu(x) = 0, \quad \sum_j \|b_j\|_{L^1(\mu)} \leq 2\,\|f\|_{L^1(\mu)};$$

\item  $\beta =  \sum_j \beta_j$, with
$$
\supp(\beta_j) \subset \widehat{Q}_j, \quad \int_{\R^d} \beta_j(x) \,d\mu(x) =0, \quad \sum_j \|\beta_j\|_{L^1(\mu)} \leq 4\,\|f\|_{L^1(\mu)},
$$
where, for each $j$, we write $\widehat{Q}_j$ to denote the dyadic parent of $Q_j$.
\end{list}
Let us compare this with the classical Calder\'on-Zygmund decomposition. First, we lose the $L^\infty$ bound for the good part, however, for practical purposes this is not a problem since in most of the cases one typically uses the $L^2$ estimate for $g$. We now have two bad terms: the typical one $b$; and the new one $\beta$, whose building blocks are supported in the dyadic cubes $\{\widehat{Q}_j\}_j$, which are not pairwise disjoint, but still possess some cancelation. This new Calder\'on-Zygmund decomposition is key to obtaining the weak-type estimates for the Haar shift operators we consider.

The organization of the paper is as follows. In Section \ref{section:main} we will state our main results and give some applications. Section \ref{section:proofs} contains the proof of our main results. In Section \ref{section:examples} we shall present some examples of measures in $\R$ that are not dyadically doubling (neither have polynomial growth) for which either the dyadic Hilbert transform, its adjoint or both are of weak-type $(1,1)$. In the higher dimensional case we will review some constructions of Haar systems. We shall see that the obtained characterization depends also on the Haar system that we work with. That is, if we take a Haar shift operator (i.e., we fix the family of coefficients) and write it with different
Haar systems, the conditions on the measure for the weak-type $(1,1)$ depend on the chosen Haar system. Finally, in Section \ref{section:further}  we present some further results including non-cancellative Haar shift operators and therefore dyadic paraproducts, and some comments about the relationship between Haar shifts and martingale transforms.

\section{Main results}\label{section:main}

In this paper we study the boundedness behavior of dyadic operators with respect to Borel measures that are not necessarily doubling.  For simplicity we will restrict ourselves to the Euclidean setting with the standard dyadic grid  $\dd $ in $\R^d$. Of course, our results should also hold for other dyadic lattices and, more in general, in the context of geometrically doubling metric spaces in terms of Christ's dyadic cubes \cite{christ1990}, or some other dyadic constructions \cite{david1991}, \cite{hytonen-martikainen2012}. We will use the following notation, for every $Q\in\dd$, we let $\dd_k(Q)$, $k\ge 1$, be the family of dyadic subcubes of side-length $2^{-k}\,\ell(Q)$.
We shall work with Borel measures $\mu$ such that $\mu(Q)<\infty$ for every dyadic cube $Q$ (equivalently, the $\mu$-measure of every compact set is finite). To go beyond the well-known framework of the Calder\'on-Zygmund theory for doubling measures, the first thing we do is to develop a Calder\'on-Zygmund decomposition adapted to $\mu$ and to the associated dyadic maximal function
$$
M_\dd  f(x) =
\sup_{x \in Q \in \dd }  \langle |f| \rangle_Q
=
\sup_{x \in Q \in \dd }
\frac{1}{\mu(Q)} \int_Q |f(x)| \, d\mu(x).
$$
Here we have used the notation $\langle g \rangle_Q$ for the $\mu$-average of $g$ on $Q$ and we set $\langle g \rangle_Q =0$ if $\mu(Q)=0$.
As usual, if $f\in L^1(\mu)$ and $\lambda>0$, we cover $\{ M_\dd  f > \lambda \}$  by the maximal dyadic cubes $\{Q_j\}_j$. In the general setting that we are considering, such maximal cubes exist (for every $\lambda>0$) if the $\mu$-measure of  every $d$-dimensional quadrant is infinity. Otherwise, maximal cubes exist for $\lambda$ large enough. For the sake of clarity in exposition, in the following result we assume that each $d$-dimensional quadrant has infinite $\mu$-measure. The general case will be addressed in Section \ref{section:general-mu} below.

One could try to use the standard Calder\'on-Zygmund decomposition, $f=g+b$ where $g$ and $b$ are respectively the ``good'' and  ``bad'' parts. As usual, in each $Q_j$ the ``good'' part would agree with $\langle f \rangle_{Q_j}$. However, this good part would not be bounded (or even higher integrable) and therefore this decomposition would be of no use. Our new Calder\'on-Zygmund decomposition solves the problem with the ``good'' part  and adds a new ``bad'' part whose building blocks have vanishing integrals and each of them is supported in $\widehat{Q}_j$, the dyadic parent of $Q_j$.

\begin{theorem} \label{theor:CZ-d}
Let $\mu$ be a Borel measure on $\R^d$ satisfying that $\mu(Q)<\infty$ for all $Q\in\dd $ and that each $d$-dimensional quadrant has infinite $\mu$-measure. Given an
integrable function $f \in L^1(\mu)$ and $\lambda>0$, consider the standard covering of $\Omega_\lambda = \{M_\dd  f > \lambda\}$ by maximal dyadic cubes $\{Q_j\}_j$. Then we can write $f = g + b + \beta$ with
\begin{align*}
g(x) &=
f(x)\,1_{\mathbb{R}^d\setminus \Omega_\lambda}(x) +  \sum_j \langle f \rangle_{\widehat{Q}_j}\,1_{Q_j}(x)
\\
&\hskip4cm
+\sum_j \big(\langle f \rangle_{Q_j} - \langle f \rangle_{\widehat{Q}_j}\big) \,\frac{\mu(Q_j)}{\mu(\widehat{Q}_j)}\, 1_{\widehat{Q}_j}(x),
\\[5pt]
b(x) & =  \sum_j b_j(x) \ = \ \sum_j \bigl(f(x) - \langle f \rangle_{Q_j}\bigr) \,1_{Q_j}(x),
\\[5pt]
\beta(x) & =  \sum_j \beta_j(x) \ = \ \sum_j \bigr(\langle f \rangle_{Q_j} - \langle f \rangle_{\widehat{Q}_j}\bigl) \, \Big(1_{Q_j}(x) - \frac{\mu(Q_j)}{\mu(\widehat{Q}_j)} \,1_{\widehat{Q}_j}(x) \Big).
\end{align*}
Then, we have the following properties:
\begin{list}{$(\theenumi)$}{\usecounter{enumi}\leftmargin=.8cm
\labelwidth=.8cm\itemsep=0.2cm\topsep=.1cm
\renewcommand{\theenumi}{\alph{enumi}}}

\item \label{item:good} The function $g$ satisfies $$\|g\|_{L^p(\mu)}^p \, \le \, C_{p} \,\lambda^{p-1} \,\|f\|_{L^1(\mu)} \quad \mbox{for every} \quad 1 \le p < \infty.$$

\item \label{item:bada} The function $b$ decomposes as $b=\sum_j b_j$, where $$\supp(b_j) \subset Q_j, \quad \int_{\R^d}b_j(x) \,d\mu(x) = 0, \quad \sum_j \|b_j\|_{L^1(\mu)} \leq 2\,\|f\|_{L^1(\mu)}.$$

\item  \label{item:badb} The function $\beta$ decomposes as $\beta =  \sum_j \beta_j$, where
$$
\supp(\beta_j) \subset \widehat{Q}_j, \quad \int_{\R^d} \beta_j(x) \,d\mu(x) =0, \quad \sum_j \|\beta_j\|_{L^1(\mu)} \leq 4\,\|f\|_{L^1(\mu)}.
$$
\end{list}
\end{theorem}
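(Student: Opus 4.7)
First I would verify the decomposition identity $f=g+b+\beta$ by a direct pointwise check on three regions: the complement $\R^d\setminus\bigcup_j\widehat{Q}_j$ (all three summands vanish and we are left with $g=f$ via the first term of $g$), the ring $\widehat{Q}_j\setminus\Omega_\lambda$ (here the first sum in $g$ and all of $b$ contribute $0$, and the $\widehat{Q}_j$-term of $g$ is exactly cancelled by the corresponding $\beta_j$-term), and each maximal cube $Q_{j_0}$ (where, after collecting siblings with the same parent, the $\langle f\rangle_{\widehat{Q}_{j_0}}$ and the $(\langle f\rangle_{Q_j}-\langle f\rangle_{\widehat{Q}_j})\mu(Q_j)/\mu(\widehat{Q}_j)$ pieces of $g$ combine with $\beta_{j_0}$ to produce $\langle f\rangle_{Q_{j_0}}$, which is then adjusted by $b_{j_0}$ to recover $f$).

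Properties (b) and (c) are then essentially bookkeeping. For (b), $\supp b_j\subset Q_j$ is built in, $\int b_j\,d\mu=\int_{Q_j}f\,d\mu-\langle f\rangle_{Q_j}\mu(Q_j)=0$, and $\|b_j\|_{L^1(\mu)}\le 2\int_{Q_j}|f|\,d\mu$; summing over the pairwise disjoint $Q_j$'s yields the factor $2$. For (c), writing $\rho_j:=\mu(Q_j)/\mu(\widehat{Q}_j)$, we have $\int(1_{Q_j}-\rho_j 1_{\widehat{Q}_j})\,d\mu=\mu(Q_j)-\rho_j\mu(\widehat{Q}_j)=0$, so $\int\beta_j\,d\mu=0$; the triangle inequality gives $\|\beta_j\|_{L^1(\mu)}\le 2|\langle f\rangle_{Q_j}-\langle f\rangle_{\widehat{Q}_j}|\mu(Q_j)$, and the two crucial bounds $|\langle f\rangle_{Q_j}|\mu(Q_j)\le\int_{Q_j}|f|\,d\mu$ and $|\langle f\rangle_{\widehat{Q}_j}|\mu(Q_j)\le\lambda\mu(Q_j)<\int_{Q_j}|f|\,d\mu$ (the last by maximality of $Q_j$ against its parent $\widehat{Q}_j$) yield $\|\beta_j\|_{L^1(\mu)}\le 4\int_{Q_j}|f|\,d\mu$.

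For property (a), the plan is to interpolate via $\|g\|_{L^p(\mu)}^p\le\|g\|_{L^\infty(\mu)}^{p-1}\|g\|_{L^1(\mu)}$. The $L^1$ bound $\|g\|_{L^1(\mu)}\lesssim\|f\|_{L^1(\mu)}$ follows by splitting $g$ into its three defining pieces and estimating each exactly as in (b) and (c). For the $L^\infty$ bound I would regroup $g$ by collecting the indices $j$ sharing a common parent: letting $\mathcal{P}=\{\widehat{Q}_j:j\}$, $E_P=\bigcup_{\widehat{Q}_j=P}Q_j$ and $\gamma_P=\mu(P)^{-1}\int_{E_P}(f-\langle f\rangle_P)\,d\mu$, one rewrites
\[
g \;=\; f\,1_{\R^d\setminus\Omega_\lambda} \;+\; \sum_{P\in\mathcal P}\langle f\rangle_P 1_{E_P} \;+\; \sum_{P\in\mathcal P}\gamma_P 1_P.
\]
The first two terms have pairwise disjoint supports and modulus at most $\lambda$ (respectively by Lebesgue differentiation on $\R^d\setminus\Omega_\lambda$, which is legal because $\mu(Q)<\infty$ for every $Q\in\dd$, and by maximality $\langle|f|\rangle_P\le\lambda$ for every $P\in\mathcal{P}$). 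The third sum is the delicate one: its building blocks have overlapping supports since the parents $\{\widehat{Q}_j\}_j$ are not disjoint.

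The main obstacle is to control this last piece $g_2=\sum_P\gamma_P 1_P$. At any fixed $x$ the set $\{P\in\mathcal P:x\in P\}$ is a nested chain, and a key geometric observation is that, for any two distinct parents $P\subsetneq P'$ in the chain, $E_{P'}$ is disjoint from $P$ (because the child of $P'$ containing $P$ is necessarily non-maximal, as otherwise it would be a maximal cube strictly containing the maximal cube inside $P$). Using this disjointness and the identity $\gamma_P=(1-\sigma_P)(\langle f\rangle_P-\langle f\rangle_{P\setminus E_P})$ with $\sigma_P=\mu(E_P)/\mu(P)$, together with the uniform bound $\langle|f|\rangle_P\le\lambda$ on every ancestor, one aims to show $|g_2|\le C_d\lambda$ $\mu$-a.e.\ pointwise. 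An alternative route, avoiding the pointwise estimate, is to bound $g_2$ in $L^p(\mu)$ by duality, starting from the identity
\[
\int g_2\,\phi\,d\mu \;=\; \sum_{j}\bigl(\langle f\rangle_{Q_j}-\langle f\rangle_{\widehat{Q}_j}\bigr)\mu(Q_j)\,\langle\phi\rangle_{\widehat{Q}_j},
\]
then using $|\langle\phi\rangle_{\widehat Q_j}|\le M_\dd\phi(y)$ for $y\in Q_j$ to pass to $\int_{\Omega_\lambda}|f|\,M_\dd\phi\,d\mu$, and closing the argument via the $L^{p'}(\mu)$-boundedness of $M_\dd$ (Doob's inequality, valid for any Borel $\mu$). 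Either way, combining the bound on $g_2$ with the disjoint $L^\infty\cap L^1$ control of the other two terms of $g$ yields the interpolation inequality, and hence (a) for all $1\le p<\infty$.
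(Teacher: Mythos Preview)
Your handling of the identity $f=g+b+\beta$ and of parts (b) and (c) is fine and matches the paper. The difficulty is entirely in (a), and there your plan has a real gap.

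You propose to get (a) from $\|g\|_{L^p}^p\le\|g\|_{L^\infty}^{p-1}\|g\|_{L^1}$, and for this you need an $L^\infty$ bound on the overlapping piece $g_2=\sum_{P\in\mathcal P}\gamma_P 1_P$. But this bound is not available in general: the paper says explicitly that for this term ``an $L^\infty$ estimate is not to be expected'', and indeed the constants it eventually obtains, $\|g_3\|_{L^m}^m\le 2^m\,m!\,\lambda^{m-1}\|f\|_{L^1}$, blow up like $(m!)^{1/m}\sim m/e$ as $m\to\infty$, which is incompatible with any uniform $L^\infty$ control. Your sketch (``one aims to show $|g_2|\le C_d\lambda$'') stops short of an argument; the nesting/disjointness observation you make about $E_{P'}\cap P=\emptyset$ is correct and useful, but it does not by itself force the sum $\sum_{P\ni x}\gamma_P$ to be bounded, because the averages $\langle f\rangle_P$ along the chain need not be monotone when $f$ changes sign, and the weights $1-\sigma_P$ prevent a clean telescoping.

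Your alternative duality route also does not close as written. From
\[
\Big|\int g_2\,\phi\,d\mu\Big|\;\le\;2\sum_j\int_{Q_j}|f|\,d\mu\cdot\langle|\phi|\rangle_{\widehat Q_j}\;\le\;2\int_{\Omega_\lambda}|f|\,M_\dd\phi\,d\mu,
\]
you would need $\int_{\Omega_\lambda}|f|\,M_\dd\phi\,d\mu\lesssim\lambda^{1/p'}\|f\|_{L^1}^{1/p}\|\phi\|_{L^{p'}}$. H\"older with exponents $(p,p')$ gives $\|f\|_{L^p}\|M_\dd\phi\|_{L^{p'}}$, but you only know $f\in L^1$; weighting the H\"older by $|f|^{1/p}\cdot|f|^{1/p'}$ produces $\|f\|_{L^1}^{1/p}\big(\int|f|(M_\dd\phi)^{p'}\big)^{1/p'}$, and there is no reason the last factor is controlled by $\lambda^{1/p'}\|\phi\|_{L^{p'}}$ without doubling.

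What the paper actually does for this piece is different and is the missing idea: it dominates $|g_3|\le 2Tf$ with
\[
Tf(x)=\sum_j\Big(\int_{Q_j}|f|\,d\mu\Big)\frac{1_{\widehat Q_j}(x)}{\mu(\widehat Q_j)},
\]
and proves by induction on integers $m\ge1$ that
\[
\|Tf\|_{L^m(\mu)}^m\;\le\;m!\,\Big(\sup_j\langle|f|\rangle_{\widehat Q_j}\Big)^{m-1}\int_{\bigcup_j Q_j}|f|\,d\mu.
\]
The inductive step expands $\|Tf\|_{L^{m+1}}^{m+1}$ as an $(m{+}1)$-fold sum over indices $(j_1,\dots,j_{m+1})$, symmetrizes over which $\widehat Q_{j_k}$ is smallest, and uses that the $Q_j$'s with smallest parent are disjoint inside the intersection of the remaining $\widehat Q_{j_i}$'s. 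Plugging $\sup_j\langle|f|\rangle_{\widehat Q_j}\le\lambda$ gives $\|g_3\|_{L^m}^m\le 2^m m!\,\lambda^{m-1}\|f\|_{L^1}$, and non-integer $p$ is handled by H\"older between $m=1$ and $m=[p]+1$. This replaces the unavailable $L^\infty$ endpoint by direct $L^m$ bounds for every integer $m$.
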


Theorem \ref{theor:CZ-d} is closely related to Gundy's martingale decomposition \cite{gundy1968} and was obtained in the unpublished manuscript \cite{martell-parcetP} (see also \cite{cruzuribe-martell-perez2011}). It is however more flexible because the building blocks are the maximal cubes in place of the martingale differences. This feature is crucial when considering Haar shift operators allowing us to characterize their weak-type $(1,1)$ for general Borel measures.

A baby model of the mentioned characterization ---which will be illustrative for the general statement--- is given by the dyadic Hilbert transform in $\R$ and its adjoint. To define this operator we first need to introduce some notation. First, to simplify the exposition, let us assume that $\mu(I)>0$ for every $I\in \dd$, below we will consider the general case.
Given $I\in\dd$ we write $I_-, I_+$ for the (left and right) dyadic children of $I$, and, as before, $\widehat{I}$ is the dyadic parent of $I$. We set
\begin{equation}\label{def-hI}
h_I \, = \, \sqrt{m(I)} \, \bigg( \frac{1_{I_-}}{\mu(I_-)} - \frac{1_{I_+}}{\mu(I_+)} \bigg), \qquad \mbox{with} \quad m(I) =  \frac{\mu(I_-)\mu(I_+)}{\mu(I)}.
\end{equation}
Let us first observe that the system $\mathcal{H}=\{h_I\}_{I\in\dd}$ is orthonormal. Additionally, for every $I\in\dd$ we have
\begin{equation}\label{hI:1-infty}
\|h_I\|_{L^1(\mu)}=2\sqrt{m(I)},
\qquad\qquad
\|h_I\|_{L^\infty(\mu)}\approx\frac1{\sqrt{m(I)}}.
\end{equation}
Therefore we obtain
the following condition which will become meaningful later
\begin{equation}\label{hI:1-infty-mult}
\sup_{I \in\dd} \|h_I\|_{L^\infty(\mu)}\|h_I\|_{L^1(\mu)}<\infty.
\end{equation}
We define the dyadic Hilbert transform by
$$
H_\dd  f(x) =
\sum_{I \in \dd } \langle f, h_I \rangle \big(h_{I_-}(x) - h_{I_+}(x)\big)
=
\sum_{I \in \dd } \sigma(I)\langle f, h_{\widehat{I}} \rangle h_I(x),
$$
where $\sigma(I)=1$ if $I=(\widehat{I}\,)_-$ and $\sigma(I)=-1$ if $I=(\widehat{I}\,)_+$.
Another toy model in the $1$-dimensional setting is the adjoint of $H_\dd$ which can be written as
$$
H_\dd ^* f(x) \, = \, \sum_{I \in \dd } \sigma(I)\langle f, h_I \rangle h_{\widehat{I}}(x).
$$

We are going to show that the increasing or decreasing properties of $m$ characterize the boundedness of $H_\dd$ and $H_\dd^*$. This motivates the following definition. We say that $\mu$ is \textit{$m$-increasing} if there exists $0<C<\infty$ such that
$$
m(I) \le C\, m(\widehat{I}\,),
\qquad
I\in\dd.
$$
We say that $\mu$ is \textit{$m$-decreasing} if there exists $0<C<\infty$ such that
$$
m(\widehat{I}\,)\le C\, m(I),
\qquad
I\in\dd.
$$
Finally, we say that $\mu$ is \textit{$m$-equilibrated} if $\mu$ is both $m$-increasing and $m$-decreasing.

Let us note that if $\mu$ is the Lebesgue measure, or in general any dyadically doubling measure, we have that $m(I)\approx \mu(I)$ and therefore $\mu$ is $m$-equilibrated. As we will show below, the converse is not true. In general, we observe that $m(I)$ is half the harmonic mean of the measures of the children of $I$ and therefore,
\begin{multline*}
m(I)
=
\left(\frac1{\mu(I_-)}+\frac1{\mu(I_+)}\right)^{-1}
\approx
\left(\max\left\{\frac1{\mu(I_-)},\frac1{\mu(I_+)}\right\}\right)^{-1}
\\[7pt]
=
\min \big\{ \mu(I_-),\mu(I_+) \big\}<\mu(I).
\end{multline*}
Thus, $m$ gives quantitative information about the degeneracy of $\mu$ over $I$: $m(I)/\mu(I)\ll 1$ implies that $\mu$ mostly concentrates on only one child of $I$, and $m(I)/\mu(I)\gtrsim 1$ gives that $\mu(I_-)\approx\mu(I_+)\approx \mu(I)$.

We are ready to state our next result which characterizes the measures for which $H_\dd$ and $H_\dd^*$ are bounded for $p\neq 2$.

\begin{theorem}\label{thm:Hil}
Let $\mu$ be a Borel measure on $\R$ satisfying that $0<\mu(I)<\infty$ for every $I\in\dd $.
\begin{list}{$(\theenumi)$}{\usecounter{enumi}\leftmargin=1cm
\labelwidth=1cm\itemsep=0.2cm\topsep=.1cm
\renewcommand{\theenumi}{\roman{enumi}}}

\item $H_\dd : L^1(\mu) \to L^{1,\infty}(\mu)$ if and only if $\mu$ is $m$-increasing.

\item $H_\dd ^*: L^1(\mu) \to L^{1,\infty}(\mu)$ if and only if $\mu$ is $m$-decreasing.
\end{list}
Moreover, if $1 < p < 2$  we have:
\begin{list}{$(\theenumi)$}{\usecounter{enumi}\leftmargin=1cm
\labelwidth=1cm\itemsep=0.2cm\topsep=.1cm
\renewcommand{\theenumi}{\roman{enumi}}} \setcounter{enumi}{2}

\item $H_\dd : L^p(\mu) \to L^p(\mu)$ if and only if $\mu$ is  $m$-increasing.

\item $H_\dd ^*: L^p(\mu) \to L^p(\mu)$ if and only if $\mu$ is  $m$-decreasing.
\end{list}
If $2 < p < \infty$, by duality, the previous equivalences remain true upon switching the conditions on $\mu$.

 Furthermore, given two non-negative integers $r$, $s$, let $\Sha_{r,s}$ be a Haar shift of complexity $(r,s)$, that is,
\begin{equation}\label{HS-d=1}
\Sha_{r,s} f (x) = \sum_{I \in \dd } \sum_{\begin{subarray}{c} J \in \dd _r(I) \\ K \in \dd _s(I) \end{subarray}} \alpha_{J,K}^I \langle f, h_J \rangle h_K(x) \quad \mbox{with} \  \ \sup_{I,J,K}|\alpha_{J,K}^I| <\infty.
\end{equation}
If $\mu$ is $m$-equilibrated then  $\Sha_{r,s}$ is bounded from $L^1(\mu)$ to $L^{1,\infty}(\mu)$ and from $L^p(\mu)$ to $L^p(\mu)$ for every $1 < p < \infty$.
\end{theorem}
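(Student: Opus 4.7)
Marcinkiewicz interpolation reduces the $L^p$-bounds (iii)--(iv) for $1<p<2$ to the corresponding weak-$(1,1)$ estimates (i)--(ii) together with the automatic $L^2$-boundedness of $H_\dd$ and $H_\dd^*$, both immediate from orthonormality of the Haar system $\mathcal{H}$. The ranges $p>2$ of (iii)--(iv) then follow by duality, which swaps $H_\dd \leftrightarrow H_\dd^*$ and simultaneously swaps $m$-increasing $\leftrightarrow m$-decreasing. Everything therefore reduces to the weak-type characterization; I focus on (i), since (ii) is obtained by transposition.

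\textbf{Sufficiency in (i).} Fix $\lambda>0$ and apply Theorem~\ref{theor:CZ-d} to write $f=g+b+\beta$. I dispose of $H_\dd g$ by Chebyshev and the $L^2$-bound of $H_\dd$, invoking property (a) with $p=2$. For the classical bad part $b=\sum_j b_j$, the vanishing $\mu$-integral of $b_j$ forces $\langle b_j,h_{\widehat I}\rangle=0$ whenever $\widehat I\supsetneq Q_j$, so every nonzero term has $\widehat I\subseteq Q_j$ and hence $I\subsetneq Q_j$; thus $H_\dd b_j$ is supported in $Q_j$, and $\mu(\{|H_\dd b|>\lambda/3\})\leq \mu\bigl(\bigcup_j Q_j\bigr)\leq \|f\|_{L^1(\mu)}/\lambda$. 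The decisive point is the $\beta$-piece. A direct calculation from \eqref{def-hI} yields the algebraic identity
\[
\beta_j \;=\; c_j\,h_{\widehat Q_j}, \qquad c_j\;=\;\sigma(Q_j)\sqrt{m(\widehat Q_j)}\bigl(\langle f\rangle_{Q_j}-\langle f\rangle_{\widehat Q_j}\bigr);
\]
every new bad block is a scalar multiple of a single Haar function. Therefore $H_\dd\beta_j=c_j\bigl(h_{(\widehat Q_j)_-}-h_{(\widehat Q_j)_+}\bigr)$, and by \eqref{hI:1-infty}
\[
\|H_\dd\beta_j\|_{L^1(\mu)} \;=\; 2|c_j|\bigl(\sqrt{m((\widehat Q_j)_-)}+\sqrt{m((\widehat Q_j)_+)}\bigr).
\]
Now the $m$-increasing hypothesis enters crisply: $m((\widehat Q_j)_\pm)\leq C\,m(\widehat Q_j)$, so the right-hand side is $\lesssim m(\widehat Q_j)\,|\langle f\rangle_{Q_j}-\langle f\rangle_{\widehat Q_j}|$. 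Using $m(\widehat Q_j)\leq \mu(Q_j)$ together with the selection criterion $\langle|f|\rangle_{\widehat Q_j}\leq \lambda<\langle|f|\rangle_{Q_j}$ bounds this by $\lesssim \int_{Q_j}|f|\,d\mu$. Summing over $j$ and applying Chebyshev closes the weak-$(1,1)$ estimate. The sufficiency in (ii) is identical upon observing $H_\dd^* h_{\widehat Q_j}=\sigma(\widehat Q_j)\,h_{\widehat{\widehat{Q}_j}}$, a single Haar function at the grandparent level, and using the $m$-decreasing relation $m(\widehat{\widehat{Q}_j})\leq C\,m(\widehat Q_j)$.

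\textbf{Necessity.} I test on $f=h_{\widehat I}$. Orthonormality gives $H_\dd h_{\widehat I}=h_{(\widehat I)_-}-h_{(\widehat I)_+}$, two Haar functions on disjoint supports. The two-valued description of $h_I$ immediately yields $\|h_I\|_{L^1(\mu)}=2\sqrt{m(I)}$ and $\|h_I\|_{L^{1,\infty}(\mu)}\approx \sqrt{m(I)}$, so any weak-$(1,1)$ bound of $H_\dd$ with constant $C_0$ forces $\sqrt{m(I)}\lesssim C_0\sqrt{m(\widehat I)}$ for every $I\in\dd$; that is the $m$-increasing condition. The $L^p$-necessity in (iii) follows the same template using $\|h_I\|_{L^p(\mu)}\approx m(I)^{1/p-1/2}$ and the positivity of $1/p-1/2$ on $(1,2)$. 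Statements (ii) and (iv) are analogous once $H_\dd^* h_I=\sigma(I)h_{\widehat I}$ is used to force the reverse inequality $m(\widehat I)\lesssim m(I)$.

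\textbf{Haar shift addendum and main obstacle.} Under $m$-equilibration the same three-piece scheme controls a general $\Sha_{r,s}$: the $g$-part by the $L^2$-bound (from orthonormality and $\sup|\alpha_{J,K}^I|<\infty$); the $b$-part, whose support lies in the $r$-th dyadic ancestors of the $Q_j$'s, controlled by iterating the $m$-equilibrated relation; and the $\beta$-part via $\Sha_{r,s}\beta_j=c_j\,\Sha_{r,s}h_{\widehat Q_j}$, a bounded combination of Haar functions at scales governed by $r$ and $s$, whose $L^1$-sizes are estimated by iterating $m$-equilibration in both directions. The main obstacle throughout is the $\beta$-piece: the naive $L^2$-bound on $\beta$ \emph{fails} without dyadic doubling because $|\langle f\rangle_{Q_j}|$ is uncontrolled, so the whole argument rests on the algebraic identification of $\beta_j$ as a single Haar function and on the exact matching between the Haar action of the operator and the prescribed $m$-growth hypothesis, which is precisely why the characterization in (i)--(ii) is sharp.
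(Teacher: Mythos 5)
Your route coincides with the paper's own proof: the same Calder\'on--Zygmund decomposition from Theorem \ref{theor:CZ-d}, the identification of each new bad block as a single Haar function, $\beta_j=\sigma(Q_j)\sqrt{m(\widehat{Q}_j)}\bigl(\langle f\rangle_{Q_j}-\langle f\rangle_{\widehat{Q}_j}\bigr)h_{\widehat{Q}_j}$ (this is precisely \eqref{beta-haar}), testing on Haar functions for the necessity via \eqref{L1-weakL1} and \eqref{hI-Lp}, and interpolation plus duality for (iii)--(iv). Part (i), the necessity arguments, and the reduction of the $L^p$ statements are correct as you present them, and your sketch of the Haar shift addendum is in the spirit of Theorem \ref{thm:HS-general}.

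There is, however, a genuine gap in the sufficiency half of (ii), which you declare ``identical'' after adapting only the $\beta$-piece. For $H_\dd^*$ the support localization you used for the classical bad part fails: cancellation still restricts the sum to intervals $I\subseteq Q_j$, but the term $I=Q_j$ contributes $\sigma(Q_j)\langle f,h_{Q_j}\rangle\,h_{\widehat{Q}_j}$, so $H_\dd^*b_j$ lives on all of $\widehat{Q}_j$, not on $Q_j$. The spill over $\widehat{Q}_j\setminus Q_j$ cannot be absorbed by a measure-of-support argument, because for a non-doubling $\mu$ the quantity $\mu(\widehat{Q}_j)$ is not controlled by $\lambda^{-1}\int_{Q_j}|f|\,d\mu$ (the stopping inequality $\langle|f|\rangle_{\widehat{Q}_j}\le\lambda$ bounds it from below, not above). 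One must instead integrate: $\int_{\R\setminus Q_j}|H_\dd^*b_j|\,d\mu\le \|h_{Q_j}\|_{L^\infty(\mu)}\|h_{\widehat{Q}_j}\|_{L^1(\mu)}\int_{Q_j}|f|\,d\mu\approx\sqrt{m(\widehat{Q}_j)/m(Q_j)}\,\int_{Q_j}|f|\,d\mu$, and it is exactly here that the $m$-decreasing hypothesis is used a second time (this is the estimate of $S_3$ in the paper's proof of (ii)). As written, your proof of (ii) omits the very step where half of the hypothesis acts; the repair is of the same nature as your $\beta$-estimate, but it must be made explicit. A smaller point: Theorem \ref{theor:CZ-d} is stated for measures giving infinite mass to each half-line, so for general $\mu$ with $0<\mu(I)<\infty$ you still need the reduction of Section \ref{section:general-mu}, where maximal cubes exist only for $\lambda>\langle|f|\rangle_{\R^{\pm}}$ and the remaining range of $\lambda$ is handled by the trivial bound $\mu(\R^{\pm})\le\lambda^{-1}\|f\|_{L^1(\mu)}$.
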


\medskip

Let us observe that our assumption on the coefficients of the Haar shift operator is not standard, below we shall explain why this is natural (see Theorem \ref{thm:HS-general} and the comment following it).

Let us observe that using the notation in the previous result $H_\dd$ is a Haar shift of complexity $(0,1)$ whereas $H_\dd^*$ is a Haar shift of complexity $(1,0)$. As noted above, dyadically doubling measures are $m$-equilibrated. Therefore, in this case, $H_\dd$, $H_\dd^*$, and all $1$-dimensional Haar shifts $\Sha_{r,s}$ with arbitrary complexity are of weak-type $(1,1)$ and bounded on $L^p(\mu)$ for every $1<p<\infty$.  In Section \ref{section:examples:1D} we shall present examples of measures in $\R$ as follows:
\begin{list}{$\bullet$}{\leftmargin=0.4cm\labelwidth=.4cm  \itemsep=0.2cm}

\item $\mu$ is $m$-equilibrated, but $\mu$  is neither dyadically doubling nor of polynomial growth. Thus, we have an example of a measure that is out of the classical theory for which the dyadic Hilbert transform, its adjoint and any Haar shift is of weak-type $(1,1)$ and bounded on $L^p(\mu)$ for every $1<p<\infty$.

\item $\mu$ is $m$-increasing, but $\mu$ is not $m$-decreasing, not dyadically doubling, not of polynomial growth. Thus, $H_\dd$ is of weak-type $(1,1)$, bounded on $L^p(\mu)$ for every $1<p\le 2$ and unbounded on $L^p(\mu)$ for $2<p<\infty$; $H_\dd^*$ is bounded on $L^p(\mu)$ for $2\le p<\infty$,
    not of weak-type $(1,1)$ and unbounded on $L^p(\mu)$ for every $1<p<2$.

\item $\mu$ is $m$-decreasing, but $\mu$ is not $m$-increasing, not dyadically doubling, not of polynomial growth. Thus, $H_\dd$ is bounded on $L^p(\mu)$ for $2\le p<\infty$,
    not of weak-type $(1,1)$ and unbounded on $L^p(\mu)$ for every $1<p<2$;  $H_\dd^*$ is of weak-type $(1,1)$, bounded on $L^p(\mu)$ for every $1<p\le 2$ and unbounded on $L^p(\mu)$ for $2<p<\infty$.

\item $\mu$ is not $m$-decreasing, not $m$-increasing, not dyadically doubling, but $\mu$ has polynomial growth. Thus, this is an example of a measure \textit{\'a la} Nazarov-Treil-Volberg and Tolsa for which $H_\dd$ and $H_\dd^*$ are bounded on $L^2(\mu)$, unbounded on $L^p(\mu)$ for $1< p<\infty$, $p\neq 2$, and not of weak-type $(1,1)$.

\end{list}

Our next goal is to extend the previous result to higher dimensions. In this case we do not necessarily assume that the measures have full support. The building blocks, that is, the Haar functions are not in one-to-one correspondence to the dyadic cubes: associated to every cube $Q$ we expect to have at most $2^{d}-1$ linearly independent Haar functions. Moreover, there are different ways to construct a Haar system (see Section \ref{section-Haar-systems} below). We next define the Haar systems that we are going to use:

\begin{definition}\label{def:GHS}
Let $\mu$ be a Borel measure on $\R^d$, $d\ge 1$, satisfying that $\mu(Q)<\infty$ for every $Q\in\dd $. We say that $\Phi=\{\phi_Q\}_{Q\in\dd}$ is a \textit{generalized Haar system} in $\R^d$ if the following conditions hold:
\begin{list}{$(\theenumi)$}{\usecounter{enumi}\leftmargin=.8cm
\labelwidth=.8cm\itemsep=0.2cm\topsep=.1cm
\renewcommand{\theenumi}{\alph{enumi}}}

\item For every $Q\in\dd$, $\supp(\phi_Q)\subset Q$.

\item If $Q'$, $Q\in\dd$ and $Q'\subsetneq Q$, then $\phi_Q$ is constant on $Q'$.

\item For every $Q\in\dd$, $\displaystyle\int_{\R^d} \phi_Q(x)\,d\mu(x)=0.$

\item For every $Q\in\dd$, either $\|\phi_Q\|_{L^2(\mu)}=1$ or $\phi_Q\equiv 0$.

\end{list}

\end{definition}

\medskip

\begin{remark}\label{remarks:def-GHS}
The following comments pertain to the previous definition.

\begin{list}{$\bullet$}{\leftmargin=0.4cm\labelwidth=.4cm  \itemsep=0.2cm}

\item Note that $(b)$ implies that $\phi_Q$ is constant on the dyadic children of $Q$. In particular, $\phi_Q$ is a simple function which takes at most $2^d$ different values. 

\item Given a generalized Haar system $\Phi=\{\phi_Q\}_{Q\in\dd}$,  we write $\dd_\Phi$ for the set of dyadic cubes $Q$ for which $\phi_Q\not\equiv 0$. By assumption, we allow $\dd_\Phi$ to be a proper subcollection of $\dd$. Note that $\{\phi_Q\}_{Q\in \dd}$ is an orthogonal system whereas $\{\phi_Q\}_{Q\in \dd_\Phi}$ is orthonormal.

    Let us point out that we allow the measure $\mu$ to vanish in some dyadic cubes. If $\mu(Q)=0$, we must  have $\phi_Q\equiv0$ and therefore $Q\in\dd\setminus\dd_\Phi$. If $\mu(Q)=\mu(Q')$ for some child $Q'$ of $Q$ (i.e., every brother of $Q'$ has null $\mu$-measure) then  $\phi_Q\equiv 0$ and thus $Q\in\dd\setminus\dd_\Phi$. Suppose now that $Q\in\dd_\Phi$ (therefore $\mu(Q)>0$),  by convention, we set $\phi_Q\equiv 0$ in every dyadic child of $Q$ with vanishing measure.

\item Let us suppose that for every $Q\in\dd_{\Phi}$, $\phi_Q$ takes exactly 2 different non-zero values (call $\Phi$ a \textit{$2$-value} generalized Haar system). In view of the previous remark, $\phi_Q$ is ``uniquely'' determined modulo a multiplicative $\pm1$. That is,  we can find $E_Q^+$, $E_Q^-\subset Q$, such that $E_Q^+\cap E_Q^-=\emptyset$, $E_Q^{\pm}$ is comprised of dyadic children of $Q$, $\mu(E_Q^{\pm})> 0$ and
\begin{equation}\label{eqn:2-value-Haar}
\phi_Q
=
\sqrt{m_\Phi(Q)}
\, \bigg( \frac{1_{E_Q^-}}{\mu(E_Q^-)} - \frac{1_{E_Q^+}}{\mu(E_Q^+)} \bigg),
\quad\mbox{with}\ \
m_\Phi(Q)=\frac{\mu(E_Q^-)\mu(E_Q^+)}{\mu(E_Q^-\cup E_Q^+)}.
\end{equation}
Then, for every $Q\in\dd_\Phi$ we have
\begin{equation}\label{eqn:2-value-Haar:1-infty}
\|\phi_Q\|_{L^1(\mu)}=2\sqrt{m_\Phi(Q)},
\qquad\qquad
\|\phi_Q\|_{L^\infty(\mu)}\approx\frac1{\sqrt{m_\Phi(Q)}}.
\end{equation}

\item In dimension $1$,  if we assume as before that $\mu(I)>0$ for every $I\in\dd$, we then have that $\mathcal{H}$ defined above is a generalized Haar system in $\R$ with $\dd_\mathcal{H}=\dd$.  The previous remark and the fact every dyadic interval has two children say that $\mathcal{H}$ is ``unique'' in the following sense: let $\Phi$ be a generalized Haar system in $\R$, then $\phi_I=\pm\,h_I$ for every $I\in\dd_\Phi$. Note that we can now allow the measure to vanish on some dyadic intervals. In such a case we will have that $\phi_I\equiv 0$ for every $I\in\dd$ for which $\mu(I_-)\cdot\mu(I_+)=0$. Also, $\phi_I=\pm h_I$  and $m_\Phi(I)=m(I)$ for every $I\in\dd_\Phi$.

\end{list}
\end{remark}

\medskip

Our main result concerning general Haar shift operators characterizes the weak-type $(1,1)$ in terms of the measure $\mu$ and the generalized Haar systems that define the operator. In Section \ref{section:NC-HS} we shall also consider non-cancellative Haar shift operators where condition $(c)$ in Definition \ref{def:GHS} is dropped for the Haar systems $\Phi$ and $\Psi$. This will allow us to obtain similar results for dyadic paraproducts.

\begin{theorem}\label{thm:HS-general}
Let $\mu$ be a Borel measure on $\R^d$, $d\ge 1$, such that $\mu(Q)<\infty$ for every $Q\in\dd $.
Let $\Phi=\{\phi_Q\}_{Q\in\dd}$ and $\Psi=\{\psi_Q\}_{Q\in\dd}$ be two generalized Haar systems in $\R^d$. Given two non-negative integers $r$, $s$ we
set
$$
\Xi(\Phi,\Psi; r,s)=\sup_{Q\in\dd}\big\{\|\phi_R\|_{L^\infty(\mu)}\|\psi_S\|_{L^1(\mu)}: R \in \dd _r(Q), S \in \dd _s(Q)\big\}.
$$
Let $\Sha_{r,s}$ be a Haar shift of complexity $(r,s)$, that is,
$$
\Sha_{r,s} f (x) = \sum_{Q \in \dd } \sum_{\begin{subarray}{c} R \in \dd _r(Q) \\ S \in \dd _s(Q) \end{subarray}} \alpha_{R,S}^Q \langle f, \phi_R \rangle \psi_S(x) \qquad \mbox{with} \quad \sup_{Q,R,S}|\alpha_{R,S}^Q|<\infty.
$$
If $\Xi(\Phi,\Psi; r,s)<\infty$, then $\Sha_{r,s}$ maps continuously $L^1(\mu)$ into $L^{1,\infty}(\mu)$, and by interpolation $\Sha_{r,s}$ is bounded on $L^p(\mu)$, $1<p\le 2$.

Conversely, let $\Sha_{r,s}$ be a Haar shift of complexity $(r,s)$ satisfying the non-degeneracy condition $\inf_{Q,R,S} |\alpha^{Q}_{R,S}|>0$. If $\Sha_{r,s}$ maps continuously $L^1(\mu)$ into $L^{1,\infty}(\mu)$ then $\Xi(\Phi,\Psi; r,s)<\infty$.
\end{theorem}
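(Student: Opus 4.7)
Fix $f \in L^1(\mu)$ and $\lambda > 0$, decompose $f = g + b + \beta$ via Theorem \ref{theor:CZ-d}, and estimate $\mu\{|\Sha_{r,s} f| > \lambda\}$ on each piece. The $L^2(\mu)$-boundedness of $\Sha_{r,s}$, with constant depending only on $\sup|\alpha^Q_{R,S}|$ and $r,s,d$, follows directly from the orthonormality of $\{\phi_R\}$ and $\{\psi_S\}$ together with $\#\dd_r(Q) = 2^{dr}$; Chebyshev and property (a) in Theorem \ref{theor:CZ-d} then close the $g$-piece. For the classical bad part $b = \sum_j b_j$, discard $\bigcup_j Q_j$ (of $\mu$-measure $\leq \|f\|_{L^1(\mu)}/\lambda$) and apply Markov. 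The mean-zero and support properties of $b_j$ combined with property $(b)$ of Definition \ref{def:GHS} force $\langle b_j, \phi_R\rangle = 0$ unless $R \subseteq Q_j$, and when such $R$ lies at depth strictly greater than $r$ below $Q_j$ its $r$-th dyadic ancestor $Q$ is contained in $Q_j$, so every associated $\psi_S$ is supported in $Q_j$. Hence only $O_{r,s,d}(1)$ triples $(Q,R,S)$ per $j$ contribute outside $Q_j$, each controlled by
\[
|\langle b_j, \phi_R\rangle| \, \|\psi_S\|_{L^1(\mu)} \leq \|b_j\|_{L^1(\mu)} \, \|\phi_R\|_{L^\infty(\mu)} \, \|\psi_S\|_{L^1(\mu)} \leq \Xi \, \|b_j\|_{L^1(\mu)},
\]
and summation over $j$ closes the $b$-estimate.

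\textbf{The crucial new ingredient is the $\beta$-estimate.} Since each $\beta_j$ is constant on $Q_j$, constant on $\widehat{Q}_j \setminus Q_j$, supported in $\widehat{Q}_j$, and has vanishing integral, property $(b)$ of Definition \ref{def:GHS} yields $\langle \beta_j, \phi_R\rangle = 0$ for every $R \neq \widehat{Q}_j$: any $R \subsetneq \widehat{Q}_j$ sits inside a single dyadic child of $\widehat{Q}_j$ on which $\beta_j$ is constant, while for $R \supseteq \widehat{Q}_j$ the function $\phi_R$ is constant on $\widehat{Q}_j$ against the mean-zero $\beta_j$. Letting $\widetilde{Q}_j$ denote the unique dyadic cube with $\widehat{Q}_j \in \dd_r(\widetilde{Q}_j)$,
\[
\Sha_{r,s} \beta_j = \langle \beta_j, \phi_{\widehat{Q}_j}\rangle \sum_{S \in \dd_s(\widetilde{Q}_j)} \alpha^{\widetilde{Q}_j}_{\widehat{Q}_j, S} \, \psi_S
\]
is a sum of at most $2^{ds}$ terms, each of $L^1(\mu)$-norm bounded by $\Xi \, \|\beta_j\|_{L^1(\mu)}$. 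Summing in $j$ and invoking Markov completes the weak-type $(1,1)$ estimate, and Marcinkiewicz interpolation with $L^2(\mu)$ delivers $L^p(\mu)$ for $1 < p \leq 2$. The main obstacle is exactly this collapse $\langle \beta_j, \phi_R\rangle = 0$ for $R \neq \widehat{Q}_j$: the parents $\widehat{Q}_j$ are not pairwise disjoint and $\sum_j \mu(\widehat{Q}_j)$ cannot be controlled in the absence of doubling, so without the collapse the scheme breaks down; this is precisely the reason the third piece in Theorem \ref{theor:CZ-d} was engineered the way it is.

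\textbf{Necessity.} Fix $R_0 \in \dd_r(Q_0)$ and $S_0 \in \dd_s(Q_0)$ with $\phi_{R_0}, \psi_{S_0} \not\equiv 0$. Let $C^+$ be a dyadic child of $R_0$ on which $|\phi_{R_0}|$ attains its $L^\infty(\mu)$-norm, and let $C^-$ be a child on which $\phi_{R_0}$ has the opposite sign (which exists by the mean-zero property of $\phi_{R_0}$). Test with $f = 1_{C^+}/\mu(C^+) - 1_{C^-}/\mu(C^-)$, so that $\|f\|_{L^1(\mu)} = 2$, $\int f\,d\mu = 0$, and $|\langle f, \phi_{R_0}\rangle| \geq \|\phi_{R_0}\|_{L^\infty(\mu)}$; a case analysis identical to the one for $\beta_j$ shows $\langle f, \phi_R\rangle = 0$ for every $R \neq R_0$. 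Consequently $\Sha_{r,s} f = \langle f, \phi_{R_0}\rangle \sum_{S \in \dd_s(Q_0)} \alpha^{Q_0}_{R_0, S} \, \psi_S$, and restricting to $S_0$ (where every other $\psi_S$ vanishes) gives $|\Sha_{r,s} f(x)| \geq c_0 \, \|\phi_{R_0}\|_{L^\infty(\mu)} \, |\psi_{S_0}(x)|$ on $S_0$, where $c_0 = \inf|\alpha^Q_{R,S}| > 0$. The assumed weak-type $(1,1)$ bound applied at level $c_0 \, \|\phi_{R_0}\|_{L^\infty(\mu)} \, t$ then yields $t \, \mu\{x \in S_0 : |\psi_{S_0}(x)| > t\} \lesssim 1/\|\phi_{R_0}\|_{L^\infty(\mu)}$ uniformly in $t > 0$; since $\psi_{S_0}$ is a simple function with at most $2^d$ distinct nonzero values, $\|\psi_{S_0}\|_{L^1(\mu)}$ and $\|\psi_{S_0}\|_{L^{1,\infty}(\mu)}$ are comparable with constants depending only on $d$, and taking the supremum over $R_0, S_0$ yields $\Xi(\Phi, \Psi; r, s) < \infty$.
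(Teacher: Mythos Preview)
Your proof is correct and follows essentially the same route as the paper's: the same Calder\'on--Zygmund decomposition, the same collapse $\langle \beta_j,\phi_R\rangle=0$ for $R\neq\widehat{Q}_j$, and the same simple-function comparison $\|\psi_{S_0}\|_{L^1(\mu)}\lesssim_d\|\psi_{S_0}\|_{L^{1,\infty}(\mu)}$ in the necessity step. The only cosmetic difference is your test function $f=1_{C^+}/\mu(C^+)-1_{C^-}/\mu(C^-)$, whereas the paper uses $\widetilde{\varphi}_{R_0}=\big(\mathrm{sgn}(\phi_{R_0})\,1_{Q_\infty}/\mu(Q_\infty)-\langle\,\cdot\,\rangle_{R_0}\big)1_{R_0}$; both are supported in $R_0$, constant on its children, mean-zero, have $L^1$-norm at most $2$, and pair with $\phi_{R_0}$ to give at least $\|\phi_{R_0}\|_{L^\infty(\mu)}$, so the distinction is immaterial. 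One small omission: Theorem~\ref{theor:CZ-d} is stated under the hypothesis $\mu\in\mathcal{M}_\infty$ (each quadrant has infinite measure), and the paper handles the general $\mu\in\mathcal{M}$ by a separate reduction; you should acknowledge that reduction rather than invoke Theorem~\ref{theor:CZ-d} directly for arbitrary $\mu$.
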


Let us point out that in the Euclidean setting with the Lebesgue measure one typically assumes that $|\alpha_{R,S}^Q|\lesssim (|R|\,|S|)^{1/2}/|Q|$. Our condition, with a general measure, is less restrictive and more natural: having assumed the corresponding condition with respect to $\mu$,  $H_\dd$ and $H_\dd^*$ would not be $1$-dimensional Haar shift operators unless $\mu$ is dyadically doubling.

To illustrate the generality and the applicability of Theorem \ref{thm:HS-general} we consider some examples. Before doing that we need to introduce some notation. Let $\Phi$ be a generalized Haar system in $\R^d$,
we say that $\Phi$ is \textit{standard} if
\begin{equation}\label{def:standard}
\sup_{Q\in\dd}\|\phi_Q\|_{L^1(\mu)}\,\|\phi_Q\|_{L^\infty(\mu)}<\infty.
\end{equation}
Note that we can restrict the supremum to $Q\in\dd_\Phi$. Also, if $Q\in\dd_\Phi$, H\"older's inequality and $(d)$ imply that each term in the supremum is bounded from below by $1$.  Thus, $\Phi$ being standard says that the  previous quantity is bounded from below and from above uniformly for every $Q\in\dd_\Phi$. Notice that in the language of Theorem \ref{thm:HS-general}, $\Phi$ being standard is equivalent to $\Xi(\Phi,\Phi; 0,0)<\infty$.

\begin{remark}
\label{def:standard-2-value}
If $\Phi$ is a $2$-value generalized Haar system, \eqref{eqn:2-value-Haar:1-infty} implies that $\Phi$ is standard. Note that in $\R$ (since every dyadic interval has two children) every generalized Haar system, including $\mathcal{H}$ introduced above,  is of $2$-value type and therefore standard.
\end{remark}

\begin{example}[Haar multipliers]\label{ex:Haar-mult}
Let $\Phi=\{\phi_Q\}_{Q}$ be a generalized Haar system in $\R^d $. We take the Haar shift operator of complexity $(r,s)=(0,0)$, usually referred to as a Haar multiplier,
$$
\Sha_{0,0} f (x) = \sum_{Q \in \dd } \alpha_{Q} \langle f, \phi_Q \rangle \phi_Q(x), \qquad \mbox{with} \quad \sup_Q|\alpha_{Q}|<\infty.
$$
Then $\Xi(\Phi,\Phi; 0,0)<\infty$ is equivalent to the fact that $\Phi$ is standard.  Therefore Theorem \ref{thm:HS-general} says that
$\Sha_{0,0}$ is of weak-type $(1,1)$ provided $\Phi$ is standard. We also have the converse for non-degenerate Haar shifts of complexity $(0,0)$. As a consequence of these we have the following characterization: ``$\Phi$ is standard if and only if all Haar multipliers are of weak-type $(1,1)$''. As observed above this can be applied to any $2$-value generalized Haar system in $\R^d$. In particular, for an arbitrary measure in $\R$ such that $\mu(I)>0$ for every $I\in\dd$, all Haar multipliers of the form
$$
\Sha_{0,0} f (x) = \sum_{I \in \dd } \alpha_{I} \langle f, h_I \rangle h_I(x), \qquad \mbox{with} \quad \sup_I|\alpha_{I}|<\infty,
$$
are of weak-type $(1,1)$. In higher dimensions, taking an arbitrary measure such that $\mu(Q)>0$ for every $Q\in\dd$, any Haar multiplier as above defined in terms of a $2$-value generalized Haar system in $\R^d$ is of weak-type $(1,1)$. We note that we cannot remove the assumption that the system is $2$-value: in Section \ref{section-Haar-systems} we shall give an example of a generalized Haar system that is not standard and a Haar multiplier that is not of weak-type $(1,1)$.
All these comments can be generalized to measures without full support.
\end{example}

\begin{example}[The dyadic Hilbert transform I]
For simplicity, we first suppose that $\mu(I)>0$ for every $I\in\dd$. The dyadic Hilbert transform in $\R$ can be seen as the non-degenerate Haar shift $H_\dd=\Sha_{0, 1}$ with $\alpha_{I,I_{\pm}}^I=\mp 1$. Theorem \ref{thm:HS-general} says that $H_\dd$ is of weak-type $(1,1)$ if and only if $\Xi(\mathcal{H},\mathcal{H}; 0,1)<\infty$, which in view of \eqref{hI:1-infty} is equivalent to the fact that $\mu$  is $m$-increasing. For the adjoint of the dyadic Hilbert transform $H_\dd^*=\Sha_{1, 0}$ with $\alpha_{I_{\pm},I}^I=\mp 1$ and this is a  non-degenerate Haar shift. Again, Theorem \ref{thm:HS-general} characterizes the weak-type $(1,1)$ of $H_\dd^*$ in terms of $\Xi(\mathcal{H},\mathcal{H}; 1,0)<\infty$, which this time rewrites into the property that $\mu$ is $m$-decreasing.
\end{example}

\begin{example}[The dyadic Hilbert transform II]
We now consider the dyadic Hilbert transform but with respect to measures that may vanish. Let $\Phi$ be a generalized Haar system in $\R$ and let $\dd_\Phi$ be as before. By the discussion above we may suppose that $\phi_I=h_I$ for every $I\in\dd_\Phi$. Then, the corresponding dyadic Hilbert transform can be written as
$$
H_{\dd,\Phi}  f =
\sum_{I \in \dd } \langle f, \phi_I \rangle \big(\phi_{I_-} - \phi_{I_+}\big)
=
\!\!
\sum_{I \in \dd_\Phi: \widehat{I}\in\dd_\Phi}
\sigma(I)\langle f, h_{\widehat{I}} \rangle h_I,
$$
where $\sigma(I)=1$ if $I=(\widehat{I}\,)_-$ and $\sigma(I)=-1$ if $I=(\widehat{I}\,)_+$.
As before we have that $H_{\dd,\Phi}=\Sha_{0,1}$ is non-degenerate. Therefore its weak-type $(1,1)$ is characterized in terms of the finiteness of $\Xi(\Phi,\Phi; 0,1)$. Thus, we obtain that
$$
H_{\dd,\Phi}:L^1(\mu)\longrightarrow L^{1,\infty}(\mu)
\quad
\Longleftrightarrow
\quad
m(I)\le
C\,m(\widehat{I}\,),\ I,\widehat{I}\in\dd_\Phi.
$$
Note that the latter condition says that $\mu$ is $m$-increasing on the family $\dd_\Phi$ (so in particular the intervals with zero $\mu$-measure or those with one child of zero $\mu$-measure do not count).

For the adjoint of $H_{\dd,\Phi}$ we have
$$
H_{\dd,\Phi}^*  f(x)
=
\sum_{I \in \dd } \sigma(I)\langle f, \phi_I\rangle \phi_{\widehat{I}}\,
=
\!\!
\sum_{I \in \dd_\Phi: \widehat{I}\in\dd_\Phi}
\sigma(I)\langle f, h_{I} \rangle h_{\widehat{I}}
$$
and we can analogously obtain
$$
H_{\dd,\Phi}^*:L^1(\mu)\longrightarrow L^{1,\infty}(\mu)
\quad
\Longleftrightarrow
\quad
m(\widehat{I}\,)\le
C\,m(I),\ I,\widehat{I}\in\dd_\Phi.
$$
\end{example}

\begin{example}[Haar Shifts in $\R$]\label{ex:HS-1d}
We start with the case $\mu(I)>0$ for every $I\in\dd$. Let us consider $\Sha=\Sha_{r,s}$ as in \eqref{HS-d=1}, that is, a Haar shift operator of complexity $(r,s)$ defined in terms of the system  $\mathcal{H}$. By Theorem \ref{thm:HS-general} we know that $\Xi(\mathcal{H},\mathcal{H}; r,s)<\infty$ is sufficient (and necessary if we knew that $\Sha$ is non-degenerate) for the weak-type $(1,1)$. We can rewrite this condition as follows: $m(K)\lesssim m(J)$ for every $I\in\dd$, $J\in\dd_r(I)$, $K\in\dd_s(I)$. If $\mu$ is $m$-equilibrated then $m(J)\approx m(I)$ and $m(K)\approx m(I)$ for every $I\in\dd$, $J\in\dd_r(I)$, $K\in\dd_s(I)$. All these and \eqref{hI:1-infty-mult} give at once $\Xi(\mathcal{H},\mathcal{H}; r,s)<\infty$  for every $r$, $s\ge 0$. Thus, in dimension $1$, the fact $\mu$ is $m$-equilibrated implies that  every Haar shift operator is of weak-type $(1,1)$. We would like to recall that in Section \ref{section:examples} we shall construct measures that are $m$-equilibrated but are neither dyadically doubling nor of polynomial growth. Thus, Haar shift operators are a large family of (dyadic) Calder\'on-Zygmund operators obeying a weak-type $(1,1)$ bound with underlaying measures that do not satisfy those classical conditions.

For measures vanishing in some cubes, Theorem \ref{thm:HS-general} gives us a sufficient (and often necessary) condition. However, it is not clear whether in such a case one can write that condition in terms of $\mu$ being $m$-equilibrated. We would need to be able to compare $m(K)$ and $m(J)$ for $K$ and $J$ as before with the additional condition that $J$, $K\in\dd_\Phi$. Note that the fact that $\mu$ is $m$-equilibrated gives information about jumps of order $1$ in the generations and it could happen that we cannot ``connect'' $J$ and $K$ with ``1-jumps'' within $\dd_\Phi$. Take for instance $I=[0,1)$, $J=[0,4)$, $d\mu(x)=1_{[0,1)\cup [2,4)}(x)\,dx$, $\Phi=\{h_I, h_J\}$ and $\Sha_{2,0}=\langle f,h_I\rangle h_J$. Then Theorem \ref{thm:HS-general} says that $\Sha_{2,0}$ is of weak-type $(1,1)$ since
$\Xi(\Phi,\Phi; 2,0)=4\,(m[0,4)\cdot m[0,1))^{1/2}=4/\sqrt{6}<\infty$. However, $\dd_\Phi=\{I,J\}$ and these two dyadic intervals are 2-generation separated.
\end{example}

\begin{example}[Haar Shifts in $\R^d$ for $2$-value generalized Haar systems]
Let us suppose that $\Phi$ and $\Psi$ are $2$-value generalized Haar systems. Write $E_Q^{\pm}$ (resp. $F_Q^{\pm}$) for the sets associated with $\phi_Q\in\dd_\Phi$ (resp. $\psi_Q\in\dd_\Psi$), see \eqref{eqn:2-value-Haar}. By \eqref{eqn:2-value-Haar:1-infty} we have that $\Xi(\Phi,\Psi; r,s)<\infty$ if an only if $\mu$ satisfies
\begin{equation}\label{r-s:2-valued}
m_\Psi(S)
=
\frac{\mu(F_S^-)\mu(F_S^+)}{\mu(F_S^-\cup F_S^+)}
\lesssim
\frac{\mu(E_R^-)\mu(E_R^+)}{\mu(E_Q^-\cup E_R^+)}=m_\Phi(R)
\end{equation}
for every $Q\in\dd$, $R\in\dd_r(Q)$, $S\in\dd_s(Q)$, $R\in\dd_\Phi$ and $S\in\dd_\Psi$. Therefore Theorem \ref{thm:HS-general} says that $\Sha_{r,s}$ is of weak-type $(1,1)$ provided $\mu$ satisfies the condition \eqref{r-s:2-valued}. The converse holds provided $\Sha_{r,s}$ is non-degenerated.

\end{example}

\section{Proofs of the main results}\label{section:proofs}

Before proving our main results and for later use, we observe that for any measurable set $E\subset\R^d$ we have $\|1_{E}\|_{L^{1,\infty}(\mu)}=\|1_{E}\|_{L^1(\mu)}=\mu(E)$. This easily implies that if $f$ is a simple function, then
\begin{equation}\label{L1-weakL1}
\|f\|_{L^{1,\infty}(\mu)}
\le
\|f\|_{L^1(\mu)}
\le
\#\{f(x):x\in\R^d\}\,
\|f\|_{L^{1,\infty}(\mu)}.
\end{equation}

\subsection{A new Calder\'on-Zygmund decomposition}\label{section:CZ}

As pointed out before, we shall work with the standard dyadic filtration  $\dd  = \bigcup_{k \in \Z} \dd _k$ in $\R^d$, but all our results hold for any other dyadic lattice. If $k \ge 0$ is a nonnegative integer, we write $\dd_k(Q)$ for the partition of $Q$ into dyadic subcubes of side-length $2^{-k}\ell(Q)$ and $Q^{(k)}$ for its $k$-th dyadic ancestor, i.e., the only cube of side-length $2^k \ell(Q)$ that contains $Q$. The cubes in $\dd _1(Q)$ are called dyadic children of $Q$ and $\widehat{Q} = Q^{(1)}$ is the dyadic parent of $Q$.

By $\mu$ we will denote any positive Borel measure on $\mathbb{R}^d$ such that $\mu(Q) < \infty$ for all $Q \in \dd $. Write $\mathcal{M}$ for the class of such measures.  Once $\mu$ is fixed, we set for $Q \in \dd $
$$
\langle f \rangle_Q \, = \, \frac{1}{\mu(Q)} \int_Q f(x) \,d\mu(x) \qquad \mbox{with} \quad \langle f \rangle_Q = 0 \ \, \mbox{when} \ \, \mu(Q) = 0.
$$ The dyadic maximal operator for $\mu \in \mathcal{M}$ is then $M_\dd f(x)=\sup_{x\in Q\in\dd} \langle |f| \rangle_Q$.

Let us write $\mathbb{R}^d_j$, $1\le j\le 2^{d}$, for the $d$-dimensional quadrants in $\mathbb{R}^d$. It will be convenient to consider temporarily the subclass $\mathcal{M}_\infty$ of measures $\mu \in \mathcal{M}$ such that $\mu(\R_j^d) = \infty$ for all $1 \le j \le 2^d$. We will prove our main results under the assumption that $\mu \in \mathcal{M}_\infty$ and sketch in Section \ref{section:general-mu} the modifications needed to adapt our arguments for any $\mu \in \mathcal{M}$.

Assuming now that $\mu \in \mathcal{M}_\infty$, we know that $\langle |f| \rangle_Q \to 0$ as $\ell({Q}) \to \infty$ whenever $f \in L^1(\mu)$. In particular, given any $\lambda>0$, there exists a collection of disjoint maximal dyadic cubes $\{Q_j\}_j$  such that
$$
\Omega_\lambda = \big\{ x \in \R^d: \ M_{\dd } f(x) >\lambda \big\} \, = \, \bigcup_j Q_j,
$$
where the cubes $\{Q_j\}_j$ are maximal in the sense that for all dyadic cubes $Q\supsetneq Q_j$ we have
\begin{equation}\label{max-CZ}
\langle |f| \rangle_Q \leq \lambda < \langle |f| \rangle_{Q_j},
\end{equation}
Using this covering of the level set $\Omega_\lambda$, we can reproduce the classical estimate to show the weak-type $(1,1)$ boundedness of the dyadic Hardy-Littlewood maximal operator. Note that maximal cubes have positive measure by construction.

\begin{proof}[Proof of Theorem \ref{theor:CZ-d}]

We are currently assuming that $\mu \in \mathcal{M}_\infty$, see Section \ref{section:general-mu} for the modifications needed in the general case. By construction, $f = g + b + \beta$. Moreover, the support and mean-zero conditions for $b_j$ and $\beta_j$ can be easily checked. On the other hand, since the cubes $Q_j$ are pairwise disjoint
$$
\sum_j \|b_j\|_{L^1(\mu)} \le 2\,\sum_j \int_{Q_j} |f(x)| \, d\mu(x) \le 2\,\|f\|_{L^1(\mu)}.
$$
Similarly, by the maximality of the Calder\'on-Zygmund cubes, see \eqref{max-CZ}, we obtain
$$
\sum_j \|\beta_j\|_{L^1(\mu)} \le \sum_j 2 \big( \langle |f| \rangle_{Q_j} + \langle |f| \rangle_{\widehat{Q}_j} \big) \mu(Q_j) \le
4\sum_j \int_{Q_j}\!\! |f| \, d\mu \le 4\|f\|_{L^1(\mu)}.
$$
It remains to prove the norm inequalities for $g$. Write $g_1$, $g_2$ and $g_3$ for each of the terms defining $g$ and let us estimate these in turn. It is immediate that $\|g_1\|_{L^1(\mu)}\leq \|f\|_{L^1(\mu)}$. Since $M_{\dd }$ is of weak-type $(1,1)$, Lebesgue's differentiation theorem yields $\|g_1\|_{L^\infty(\mu)} \le \|M_{\dd } f \cdot 1_{\mathbb{R}^d\setminus \Omega_\lambda}\|_{L^\infty(\mu)} \le \lambda$.  The estimates for $g_2$ are similar.  Since $\langle |f| \rangle_{\widehat{Q}_j} \leq \lambda$, we obtain
$$
\|g_2\|_{L^1(\mu)} \le \lambda\,\mu(\Omega_\lambda) \le \|f\|_{L^1(\mu)} \qquad \mbox{and} \qquad \|g_2\|_{L^\infty(\mu)} \leq \lambda.
$$
These estimates immediately yield the corresponding $L^p(\mu)$-estimates for $g_1$ and $g_2$.

The estimate for $g_3$ is not straightforward: each term in the sum is supported in $\widehat{Q}_j$, and these sets are not pairwise disjoint in general. In particular, an $L^\infty$ estimate is not to be expected. However, we do have that
\begin{multline*}
|g_3(x)| \le \sum_j \big( \langle |f| \rangle_{Q_j} + \langle |f| \rangle_{\widehat{Q}_j} \big) \frac{\mu(Q_j)}{\mu(\widehat{Q}_j)} \,1_{\widehat{Q}_j}(x)
\\
\le
2\,\sum_j \Big(\int_{Q_j} |f(y)| \,d\mu(y)\Big)\,\frac{1}{\mu(\widehat{Q}_j)}  \,1_{\widehat{Q}_j}(x)
=: 2\, Tf(x).
\end{multline*}

The following lemma contains the relevant estimates for $T$:
\begin{lemma}\label{lem:aux}
Let $\{Q_j\}_j$ be a family of pairwise disjoint dyadic cubes and set
$$
Tf(x) = \sum_j \Big(\int_{Q_j} |f(y)|\,d\mu(y)\Big) \frac{1}{\mu(\widehat{Q}_j)}\,1_{\widehat{Q}_j}(x).
$$
For every $m \in \mathbb{N}$, $T$ satisfies the estimate
\begin{align*}
\|T f\|_{L^m(\mu)}^m
&\leq m! \Big(\sup_j \frac{1}{\mu(\widehat{Q}_j)}\int_{\widehat{Q}_j} |f(y)|\,d\mu(y)\Big)^{m-1}\int_{\bigcup_j Q_j} |f(x)|\,d\mu(x)
\end{align*}
\end{lemma}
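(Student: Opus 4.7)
The plan is to reduce the estimate to a dyadic Carleson packing inequality and then iterate it $m-1$ times. First I would regroup the operator by common parent:
$$
Tf(x) \,=\, \sum_{P\in\dd} d_P\,1_P(x), \qquad d_P \,:=\, \frac{1}{\mu(P)}\sum_{j:\,\widehat{Q}_j=P}\int_{Q_j}|f|\,d\mu,
$$
so that $d_P=0$ unless $P=\widehat{Q}_j$ for some $j$. Writing $M$ for the supremum appearing in the statement and using that the $Q_j$'s sharing a common parent $P$ are pairwise disjoint children of $P$, one gets at once $d_P\le\langle|f|\rangle_P\le M$.

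The main obstacle is to establish the Carleson packing bound
$$
\sum_{P\subseteq R} d_P\,\mu(P) \,\le\, M\,\mu(R) \qquad \text{for every } R\in\dd.
$$
The subtlety is that the hypothesis only bounds averages of $|f|$ on the parents $\widehat{Q}_j$, not on arbitrary $R$, so $\int_R|f|\,d\mu/\mu(R)$ need not be dominated by $M$. I would get around this by extracting the $\subseteq$-maximal cubes $\{\widehat{Q}_{j_k}\}_k$ in $\{\widehat{Q}_j:\widehat{Q}_j\subseteq R\}$, which are pairwise disjoint and contained in $R$. Rewriting the left-hand side as $\sum_{j:\widehat{Q}_j\subseteq R}\int_{Q_j}|f|\,d\mu$, grouping the $j$'s according to which maximal ancestor contains $\widehat{Q}_j$, and invoking the disjointness of the $Q_j$'s bounds each block by $\int_{\widehat{Q}_{j_k}}|f|\,d\mu\le M\,\mu(\widehat{Q}_{j_k})$; summing over the disjoint ancestors yields $M\,\mu(R)$.

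Next I would expand the integral multilinearly:
$$
\int(Tf)^m\,d\mu \,=\, \sum_{P_1,\dots,P_m} d_{P_1}\cdots d_{P_m}\,\mu(P_1\cap\cdots\cap P_m).
$$
A nonzero contribution forces the $P_k$'s to form a (possibly degenerate) dyadic chain, in which case the intersection is the smallest member. A standard multiset symmetry---each unordered multiset of $m$ cubes corresponds to at most $m!$ ordered tuples, with equality precisely when all entries are distinct---gives
$$
\int(Tf)^m\,d\mu \,\le\, m!\sum_{P_1\subseteq P_2\subseteq\cdots\subseteq P_m} \mu(P_1)\prod_{k=1}^m d_{P_k}.
$$

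Finally, I would evaluate this iterated sum by peeling off the Carleson bound from the inside out: the innermost slice gives $\sum_{P_1\subseteq P_2}\mu(P_1)\,d_{P_1}\le M\,\mu(P_2)$; feeding this into the next slice yields $\sum_{P_2\subseteq P_3}d_{P_2}\cdot M\,\mu(P_2)\le M^2\,\mu(P_3)$; and an easy induction shows that after $m-1$ peelings one is left with $M^{m-1}\sum_{P_m}d_{P_m}\,\mu(P_m)$. Since $\sum_{P_m}d_{P_m}\,\mu(P_m)=\sum_j\int_{Q_j}|f|\,d\mu=\int_{\bigcup_j Q_j}|f|\,d\mu$, the desired bound $m!\,M^{m-1}\int_{\bigcup_j Q_j}|f|\,d\mu$ follows.
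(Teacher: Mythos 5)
Your proof is correct, and it reaches the estimate by a route that is organized differently from the paper's. The paper argues by induction on $m$: the inductive step uses symmetry to single out the tuple index whose $\widehat{Q}_{j_k}$ is the innermost cube (yielding the factor $m+1$), then invokes pairwise disjointness of the $Q_j$'s contained in the common intersection to contract one factor of $Tf$. You instead regroup the operator by common parent into coefficients $d_P$, isolate a Carleson packing inequality $\sum_{P\subseteq R} d_P\,\mu(P)\le M\,\mu(R)$ as the key lemma, and evaluate the full $m$-fold multilinear expansion in a single pass by peeling chains from the inside out; the $m!$ comes in at once from the multiset-to-ordered-tuple overcounting. Mathematically the two are the same argument --- the paper's inductive step is precisely one application of your packing bound, applied to the frozen intersection $\widehat{Q}_{j_1}\cap\cdots\cap\widehat{Q}_{j_m}$ --- but your formulation has the advantage of making the Carleson embedding visibly the driving mechanism, which is conceptually cleaner and potentially more reusable. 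One small economy you could note: in the peeling step the outer variable $P_{k+1}$ only contributes when $d_{P_{k+1}}>0$, i.e.\ when $P_{k+1}=\widehat{Q}_{j'}$ for some $j'$, in which case $\int_{P_{k+1}}|f|\,d\mu\le M\,\mu(P_{k+1})$ holds directly and the maximal-cube extraction in your proof of the packing bound can be bypassed; proving it for general dyadic $R$ is correct but slightly more than the argument requires.
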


Assume this result momentarily. The case $m=1$ implies $\|g_3\|_{L^1(\mu)} \le 2\,\|f\|_{L^1(\mu)}$. On the other hand, applying it for a general integer $m$, we get by \eqref{max-CZ}
$$\|g_3\|_{L^m(\mu)}^m \, \le \, 2^m\,m!\,\lambda^{m-1}\,\|f\|_{L^1(\mu)}.$$ Now, if $1 < p < \infty$ is not an integer, we take $m = [p]+1$ and let $0 < \theta < 1$ be such that $p = \theta + (1 - \theta)m$. Then, by H\"older's inequality with indices $\frac{1}{\theta}$ and $\frac{1}{1-\theta}$, we obtain as desired
$$
\|g_3\|_{L^p(\mu)}^p
\le \|g_3\|_{L^1(\mu)}^\theta \|g_3\|_{L^m(\mu)}^{(1 - \theta)m}
\le 2^p (m!)^{\frac{p - 1}{m-1}} \lambda^{p - 1}\|f\|_{L^1(\mu)}.
$$
\end{proof}

\begin{proof}[Proof of Lemma \ref{lem:aux}]
The case $m = 1$ is trivial. Let us proceed by induction and assume that the estimate for $m$ holds. Write $\varphi_j = \frac{1}{\mu(\widehat{Q}_j)} \int_{Q_j} |f|\, d\mu$ and define the sets
\begin{multline*}
\Lambda_k
=
\Big \{ (j_1, j_2, \ldots , j_{m+1}) \in \N^{m+1}: \widehat{Q}_{j_k} = \widehat{Q}_{j_1} \cap \widehat{Q}_{j_2} \cap \cdots \cap \widehat{Q}_{j_{m+1}} \Big\}
\\
=
\Big \{ (j_1, j_2, \ldots , j_{m+1}) \in \N^{m+1}:  \widehat{Q}_{j_k} \subset \widehat{Q}_{j_1}, \dots, \widehat{Q}_{j_{m+1}}\Big\}.
\end{multline*}
By symmetry we obtain
\begin{align*}
&\|T f\|_{L^{m+1}(\mu)}^{m+1}
\le
\sum_{k = 1}^{m+1} \sum_{\Lambda_{k}} \varphi_{j_1} \cdots \varphi_{j_{m + 1}} \, \mu(\widehat{Q}_{j_1} \cap \cdots \cap \widehat{Q}_{j_{m+1}})
\\
&
\quad=(m+1)
\sum_{\Lambda_{m+1}} \varphi_{j_1} \cdots \varphi_{j_m} \int_{Q_{j_{m+1}}}|f(x)|\,d\mu(x)
\\&
\quad= (m+1)\sum_{j_1, \ldots, j_m} \, \varphi_{j_1} \cdots \varphi_{j_m} \,\sum_{j_{m+1}: (j_1,\dots,j_{m+1})\in\Lambda_{m+1}} \int_{Q_{j_{m+1}}}|f(x)|\,d\mu(x).
\end{align*}
Notice that for a fixed $m$-tuple $(j_1,\dots,j_m)$, it follows that
$$
\bigcup_{j_{m+1}: (j_1,\dots,j_{m+1})\in\Lambda_{m+1}} Q_{j_{m+1}}
\subset
\bigcup_{j_{m+1}: (j_1,\dots,j_{m+1})\in\Lambda_{m+1}} \widehat{Q}_{j_{m+1}}
\subset
\widehat{Q}_{j_1} \cap \cdots \cap \widehat{Q}_{j_m},
$$
and, moreover, the cubes in the first union are pairwise disjoint.
Thus, the fact that $\widehat{Q}_{j_1} \cap \cdots \cap \widehat{Q}_{j_m}=\widehat{Q}_{j_i}$, for some $1\le i\le m$, gives
\begin{multline*}
\|T f\|_{L^{m+1}(\mu)}^{m+1}
\le (m+1)\sum_{j_1, \ldots, j_m} \, \varphi_{j_1} \cdots \varphi_{j_m} \, \int_{\widehat{Q}_{j_1} \cap \cdots \cap \widehat{Q}_{j_m}}|f(x)|\,d\mu(x)
\\
\leq
(m+1)
\Big( \sup_j \frac{1}{\mu(\widehat{Q}_j)} \int_{\widehat{Q}_j} |f| \, d\mu \Big)\sum_{j_1, \ldots, j_m } \varphi_{j_1} \cdots \varphi_{j_m} \, \mu(\widehat{Q}_{j_1} \cap \cdots \cap \widehat{Q}_{j_m})
\\
=
(m+1)
\Big( \sup_j \frac{1}{\mu(\widehat{Q}_j)} \int_{\widehat{Q}_j} |f| \, d\mu \Big) \|T f\|_{L^{m}(\mu)}^{m}.
\end{multline*}
This and the induction hypothesis yield at once the desired estimate and the proof is complete. 
\end{proof}

The new Calder\'on-Zygmund decomposition in Theorem \ref{theor:CZ-d} can be used to obtain that some classical operators are of weak-type $(1,1)$ for general Borel measures: the $\ell^q$-valued dyadic Hardy-Littlewood maximal function with $1<q<\infty$, the dyadic square function, and $1$-dimensional Haar multipliers. For the first operator,  one needs a straightforward sequence-valued extension of the new Calder\'on-Zygmund decomposition and the reader is referred to \cite{cruzuribe-martell-perez2011}. Let us then look at the dyadic square function
$$
\mathcal{S}f(x) = \bigg( \sum_{Q \in \dd} \big|\langle f \rangle_Q - \langle f \rangle_{\widehat{Q}}\big|^2 1_Q(x) \bigg)^{1/2}.
$$
It is well-known that $\mathcal{S}$ is bounded from $L^1(\mu)$ to $L^{1,\infty}(\mu)$ with a proof adopting a probabilistic point of view. However, using our Calder\'on-Zygmund decomposition one can reprove this result using harmonic analysis techniques as follows. We decompose $f=g+b+\beta$ as in Theorem \ref{theor:CZ-d}. The estimate for the good part is standard using that $\mathcal{S}$ is bounded on $L^2(\mu)$ and $(a)$ in Theorem \ref{theor:CZ-d}. For the bad terms, using the weak-type $(1,1)$ of $M_\dd$, it suffices to restrict the level set to $\R^d\setminus\Omega_\lambda$. Theorem \ref{theor:CZ-d} parts $(b)$ and $(c)$ yield respectively that $(\mathcal{S} b_j)\,1_{\R^d\setminus Q_j}\equiv 0$  and $(\mathcal{S} \beta_j)\,1_{\R^d\setminus \widehat{Q}_j}\equiv 0$. Thus everything is reduced to the following
\begin{multline*}
\mu\{x\in\R^d\setminus \Omega_\lambda: \mathcal{S}\beta(x)>\lambda/2\}
\le
\frac2{\lambda}\,\sum_j \int_{\widehat{Q}_j\setminus Q_j}|\mathcal{S}\beta_j|\,d\mu
\\
=
\frac2{\lambda}\,\sum_j |\langle f \rangle_{Q_j} - \langle f \rangle_{\widehat{Q}_j}|\frac{\mu(Q_j)}{\mu(\widehat{Q}_j)} \mu(\widehat{Q}_j \setminus Q_j)
\le
\frac4{\lambda}\,\sum_j \int_{Q_j}|f|\,d\mu
\le
\frac4{\lambda}\|f\|_{L^1(\mu)}.
\end{multline*}
All these ingredients allow one to conclude that $\mathcal{S}$ is of weak-type $(1,1)$. Details are left to the reader

Finally, under the assumption that $0<\mu(I)<\infty$ for all $I\in\dd$, we consider the $1$-dimensional Haar multipliers  defined as
$$
T_\alpha f(x)=\sum_{I \in \dd} \alpha_I \langle f,h_I \rangle  h_I(x),
\qquad
\sup_I |\alpha_I|<\infty.
$$
This operator is bounded on $L^2(\mu)$ by orthonormality. A probabilistic point of view, see Section \ref{Paragraph-Probability},  yields that $T_\alpha$ is a dyadic martingale transform and therefore of weak-type $(1,1)$. Again, our new decomposition gives a proof with a ``harmonic analysis'' flavor. We first observe that $T_\alpha b_j(x)=0$ for every $x\in \R\setminus Q_j$. Therefore, using Theorem \ref{theor:CZ-d} and proceeding as above everything reduces to the following estimate
\begin{multline*}
\mu\{x\in\R: |T_\alpha\beta(x)|>\lambda/2\}
\le
\frac2\lambda\sum_j |\alpha_{\widehat{I}_j}|\,\big|\langle f \rangle_{I_j} - \langle f \rangle_{\widehat{I}_j}\big| \sqrt{m(\widehat{I}_j)}\|h_{\widehat{I}_j}\|_{L^1(\mu)}
\\
\le
\sup_I|\alpha_I|\frac8\lambda\sum_j \langle |f| \rangle_{I_j}m(\widehat{I}_j)
\le
\sup_I|\alpha_I| \frac8{\lambda}\|f\|_{L^1(\mu)},
\end{multline*}
where we have used \eqref{beta-haar} below, \eqref{max-CZ}, \eqref{hI:1-infty} and that $m(\widehat{I}_j)<\mu(I_j)$ .

\subsection{The dyadic Hilbert transform}

In this section we prove Theorem \ref{thm:Hil}. Although the estimates for $H_\dd$ and $H_\dd^*$ follow from Theorem \ref{thm:HS-general} as explained above, we believe that it is worth giving the argument: the proofs for our toy models $H_\dd$ and $H_\dd^*$ are much simpler and have motivated our general result. We will skip, however, the last statement in the result since it follows from Theorem \ref{thm:HS-general}, as explained in Example \ref{ex:HS-1d}, and interpolation.

Before starting the proof we observe that by the orthonormality of the system $\mathcal{H}$ we have
\begin{equation}
\label{eq:dhill2}
\|H_\dd  f\|_{L^2(\mu)}^2
=
\sum_{I \in \dd } |\langle f, h_{\widehat{I}} \rangle |^2
\le 2\|f\|_{L^2(\mu)}^2.
\end{equation}
Thus, $H_\dd$ and $H_\dd^*$ are bounded on $L^2(\mu)$.

\begin{proof}[Proof of Theorem \ref{thm:Hil}, part $(i)$]
We first prove the necessity of $\mu$ being $m$-increasing. Take $f = h_I$ so that $H_\dd  f = h_{I_-} - h_{I_+}$. Using that $h_I$ is constant on dyadic subintervals of $I$, \eqref{L1-weakL1} and that $H_\dd $ is of weak-type $(1,1)$ we obtain that $\mu$ is $m$-increasing:
\begin{multline*}
\Bigl(\sqrt{m(I_-)} + \sqrt{m(I_+)}\,\Bigr)
\approx
\|h_{I_-}\|_{L^1(\mu)} + \|h_{I_+}\|_{L^1(\mu)}
\\
\approx
\|H_\dd  h_I\|_{L^{1,\infty}(\mu)}
\lesssim \|h_I\|_{L^1(\mu)}  \approx \sqrt{m(I)}.
\end{multline*}

Next we obtain that if $\mu$ is $m$-increasing then $H_\dd$ is of weak-type $(1,1)$. In order to use Theorem \ref{theor:CZ-d}, we shall assume that $\mu\in\mathcal{M}_\infty$, that is, $\mu[0,\infty)=\mu(-\infty,0)=\infty$. The general case will be considered in Section \ref{section:general-mu} below. Fix $\lambda>0$ and decompose $f$ by means of the Calder\'on-Zygmund decomposition in Theorem \ref{theor:CZ-d}. Hence,
\begin{multline*}
\mu \{x \in \mathbb{R}: |H_\dd  f(x)| > \lambda\}
\leq \mu\{ x\in \mathbb{R} : |H_\dd  g(x)| >
\lambda/3\} + \mu(\Omega_\lambda)
\\
+ \mu\{ x\in \mathbb{R} \setminus \Omega_\lambda: |H_\dd  b(x)| > \lambda/3\}
+ \mu\{ x\in \mathbb{R} : |H_\dd  \beta(x)| > \lambda/3\}
\\
= S_1 + S_2 +S_3 + S_4.
\end{multline*}
Using the weak-type $(1,1)$ for $M_\dd$, Theorem \ref{theor:CZ-d} part $(a)$ and \eqref{eq:dhill2} it is standard to check that $S_1 + S_2  \leq (C/\lambda) \|f\|_{L^1(\mu)}$.
Using that each $b_j$ has vanishing integral and that $h_I$ is constant on each $I_{\pm}$ it is easy to see that $H_\dd  b_j(x)=0$ whenever $x\in \mathbb{R}\setminus I_j$ and thus $S_3 = 0$. To estimate $S_4$ we first observe that \begin{equation}\label{beta-haar}
\langle \beta_j, h_I\rangle
=
\sigma(I_j)(\langle f \rangle_{I_j} - \langle f \rangle_{\widehat{I}_j}\bigr)\sqrt{m(\widehat{I}_j)}\,\delta_{\widehat{I}_j,I}.
\end{equation}
This can be easily obtained using that $\beta_j$ and $h_I$ have vanishing integrals; that $\beta_j$ is supported on $\widehat{I}_j$ and constant on each dyadic children of $\widehat{I}_j$; and that $h_I$ is supported on $I$. Thus,
$$
H_\dd  \beta_j
=
\sum_{I \in \dd } \sigma(I) \langle \beta_j,h_{\widehat{I}} \rangle h_I
=
(\langle f \rangle_{I_j} - \langle f \rangle_{\widehat{I}_j}\bigr)\sqrt{m(\widehat{I}_j)}\ (h_{I_j} - h_{I_j^b}),
$$
where $I^b = \widehat{I} \setminus I \in \dd $ is the dyadic brother of $I \in \dd $. Using \eqref{max-CZ}, \eqref{hI:1-infty}, the assumption that $\mu$ is $m$-increasing and the fact that $m(\widehat{I}\,) \leq \mu(I)$ for every $I \in \dd $ we conclude as desired
\begin{align*}
S_4
\leq \frac{3}{\lambda}\sum_j
\|H_\dd  \beta_j\|_{L^1(\mu)}
\lesssim
\frac{1}{\lambda}\sum_j\langle |f| \rangle_{I_j} m(\widehat{I}_j)
\lesssim \frac{1}{\lambda} \sum_j\int_{I_j} |f|\,d\mu
\leq \frac{1}{\lambda} \|f\|_{L^1(\mu)}.
\end{align*}
This completes the proof of $(i)$.
\end{proof}

\begin{proof}[Proof of Theorem \ref{thm:Hil}, part $(ii)$]
Take $f = h_I$ so that $H_\dd^*f = \sigma(I)\,h_{\widehat{I}}$.  Assuming that $H_\dd^*$ is of weak-type $(1,1)$ we obtain by \eqref{L1-weakL1} that $\mu$ is $m$-decreasing:
$$
2\sqrt{m(\widehat{I}\,)}
=
\|h_{\widehat{I}}\|_{L^1(\mu)}
\approx
\|h_{\widehat{I}}\|_{L^{1,\infty}(\mu)}
=
\|H_\dd^*f \|_{L^{1,\infty}(\mu)}
\lesssim
\|h_I\|_{L^1(\mu)} \approx\sqrt{m(I)}.
$$

To prove the converse we proceed as above. We shall assume that $\mu\in\mathcal{M}_\infty$, the general case will be considered in Section \ref{section:general-mu} below. The estimates for
$S_1$ and $S_2$ are standard (since $H_\dd^*$ is bounded on $L^2(\mu)$). For $S_3$ we first observe that if $x\in\R\setminus I_j$
$$
H_\dd^*b_j(x)
\!=\! \sum_{I \in \dd } \sigma(I) \langle b_j,h_I \rangle h_{\widehat{I}}(x)
= \sigma(I_j)\langle b_j, h_{I_j} \rangle h_{\widehat{I}_j}(x)
= \sigma(I_j)\langle f, h_{I_j} \rangle h_{\widehat{I}_j}(x).
$$
We use this expression, \eqref{hI:1-infty} and that $\mu$ is $m$-decreasing:
\begin{multline*}
S_3 \leq \frac{3}{\lambda} \sum_j \int_{\mathbb{R} \setminus I_j}|H_\dd ^* b_j(x)| \,d\mu(x)
\le
\frac{3}{\lambda} \sum_j
\|h_{I_j}\|_{L^\infty(\mu)}\|h_{\widehat{I}_j}\|_{L^1(\mu)} \int_{I_j}|f|\,d\mu
\\
\approx \frac{1}{\lambda} \sum_j \sqrt{\frac{m(\widehat{I}_j)}{m(I_j)}}\int_{I_j}|f|\,d\mu
\lesssim
\frac{1}{\lambda} \sum_j \int_{I_j}|f|\,d\mu
\le \frac1{\lambda}\|f\|_{L^1(\mu)}.
\end{multline*}
To estimate $S_4$ we use \eqref{beta-haar},
$$
H_\dd ^*\beta_j
= \sum_{I \in \dd } \sigma(I) \langle \beta_j,h_I \rangle h_{\widehat{I}}
= \sigma(\widehat{I}_j)\sigma(I_j)\bigl(\langle f \rangle_{I_j} - \langle f \rangle_{\widehat{I}_j}\bigr)\sqrt{m(\widehat{I}_j)} \, h_{I_j^{(2)}},
$$
where we recall that $I_j^{(2)}$ is the 2nd-dyadic ancestor of $I_j$.
We use that $\mu$ is $m$-decreasing and $m(\widehat{I}\,) \le \mu(I)$ to conclude that
\begin{multline*}
S_4 \leq \frac{3}{\lambda}\sum_j \|H_\dd ^* \beta_j\|_{L^1(\mu)}
\le
\frac{12}{\lambda}\sum_j \langle |f| \rangle_{I_j}\sqrt{m(\widehat{I}_j) m\bigl(I_j^{(2)}\bigr)}
\\
\lesssim
\frac{1}{\lambda}\sum_j \langle |f| \rangle_{I_j} m(\widehat{I}_j)
\lesssim \frac{1}{\lambda} \sum_j\int_{I_j} |f|\,d\mu
\leq \frac{1}{\lambda} \|f\|_{L^1(\mu)}.
\end{multline*}
This completes the proof of $(ii)$.
\end{proof}

\begin{proof}[Proof of Theorem \ref{thm:Hil}, part $(iii)$]
If  $\mu$ is $m$-increasing we can use $(i)$ to interpolate with the $L^2(\mu)$ bound to conclude estimates on $L^p(\mu)$ for every $1<p<2$. Conversely, we note that
\begin{equation}\label{hI-Lp}
\|h_I\|_{L^p(\mu)}
=
\sqrt{m(I)}\Biggl( \frac{1}{\mu(I_-)^{p-1}} + \frac{1}{\mu(I_+)^{p-1}}\Biggr)^{\frac1p}
\approx
m(I)^{\frac12 - \frac1{p'}}.
\end{equation}
On the other hand, if we then assume that $H_\dd$ is bounded on $L^p(\mu)$ we conclude that
\begin{multline*}
m(I_-)^{\frac12 - \frac1{p'}} + m(I_+)^{\frac12 - \frac1{p'}}
\approx
\|h_{I_-} - h_{I_+}\|_{L^{p}(\mu)}
=
\|H_\dd  h_I\|_{L^{p}(\mu)}
\\
\lesssim
\|h_I\|_{L^p(\mu)}
\approx m(I)^{\frac12 - \frac1{p'}}.
\end{multline*}
This and the fact that $1<p < 2$ imply that $\mu$ is $m$-increasing.
\end{proof}

\begin{proof}[Proof of Theorem \ref{thm:Hil}, part $(iv)$]
For $H_\dd ^*$ we can proceed in the same way. By interpolation and $(ii)$, $\mu$ being $m$-decreasing gives boundedness on $L^p(\mu)$ for $1<p<2$. Conversely,
if $H_\dd ^*$ is bounded on $L^p(\mu)$ for some $1 < p < 2$, then
\begin{align*}
m(\widehat{I}\,)^{\frac12 - \frac1{p'}} \approx \|h_{\widehat{I}}\|_{L^{p}(\mu)} = \|H_\dd ^* h_I\|_{L^{p}(\mu)} \lesssim \|h_I\|_{L^p(\mu)} \approx m(I)^{\frac12 - \frac1{p'}},
\end{align*}
and therefore $\mu$ is $m$-decreasing.
\end{proof}


\subsection{Haar shift operators in higher dimensions}\label{section:HS-general}

We first see that $\Sha_{r,s}$ is a bounded operator on $L^2(\mu)$.
Following  \cite{hytonen2011p} or \cite{hytonen2012}, we write
$$
\Sha_{r,s} f (x) =
\sum_{Q \in \dd } \Biggl(\sum_{\begin{subarray}{c} R \in \dd _r(Q) \\ S \in \dd _s(Q) \end{subarray}} \alpha_{R,S}^Q \langle f, \phi_R \rangle \psi_S(x)\Biggr)
=:
\sum_{Q \in \dd } A_Q f(x)
$$
As observed before, $\Phi$ and $\Psi$ are orthogonal systems. This implies
\begin{align}\label{L2-AQ}
\|A_Qf\|_{L^2(\mu)}^2
&=
\sum_{S \in \dd_s(Q)} \Bigg| \Biggl\langle f, \sum_{R \in \dd_r(Q)} \alpha_{R,S}^Q \phi_R\Biggr\rangle \Bigg|^2 \|\psi_S\|_{L^2(\mu)}^2
\\ \nonumber
&\le \|f\|_{L^2(\mu)}^2\,\sum_{\begin{subarray}{c} R \in \dd _r(Q) \\ S \in \dd _s(Q) \end{subarray}} \big|\alpha_{R,S}^Q\big|^2 \|\phi_R\|_{L^2(\mu)}^2
\\ \nonumber
&\le
2^{(r+s)d}\,\big(\sup_{Q,R, S} \big|\alpha_{R,S}^Q\big|^2\big)\|f\|_{L^2(\mu)}^2.
\end{align}

For $Q \in \dd$ and non-negative integer $r,s$, we write $P_{\Phi,Q}^r$ and $P_{\Psi,Q}^s$ for the projections
$$
P_{\Phi,Q}^r f = \sum_{R \in \dd_r(Q)} \langle f,\phi_R \rangle \phi_R,
\qquad
P_{\Psi,Q}^s f = \sum_{S \in \dd_s(Q)} \langle f,\psi_S \rangle \psi_S.
$$
We then have
\begin{multline*}
P_{Q,\Psi}^sA_QP_{Q,\Phi}^rf
=
\sum_{\begin{subarray}{c} R \in \dd _r(Q) \\ S \in \dd _s(Q) \end{subarray}} \langle A_Q(\phi_R), \psi_S \rangle \,\langle f, \phi_R \rangle \psi_S
\\
=
\sum_{\begin{subarray}{c} R \in \dd _r(Q) \\ S \in \dd _s(Q) \end{subarray}}\!\! \|\phi_R\|_{L^2(\mu)}^2\|\psi_S\|_{L^2(\mu)}^2\alpha_{R,S}^Q \langle f, \phi_R \rangle \psi_S
=
A_Q f.
\end{multline*}
Fixed $r$ and $s$, we notice that the projections $P_{\Phi,Q}^r$ are orthogonal on the index $Q$ and the same occurs with $P_{\Psi,Q}^s$. Hence, by \eqref{L2-AQ} and orthogonality
\begin{multline*}
\|\Sha_{r,s} f\|_{L^2(\mu)}^2
=
\sum_{Q \in \dd}\big\|P_{Q,\Psi}^sA_QP_{Q,\Phi}^rf\big\|_{L^2(\mu)}^2
\le
C\,
\sum_{Q \in \dd}\big\|P_{Q,\Phi}^rf\big\|_{L^2(\mu)}^2
\\
\le
C\,
\sum_{Q \in \dd}\sum_{R\in\dd_r(Q)} \big|\langle f,\phi_R \rangle\big|^2
 \leq
C\,\|f\|_{L^2(\mu)}^2,
\end{multline*}
and this shows that $\Sha_{r,s}$ is bounded on $L^2(\mu)$.

\begin{proof}[Proof of Theorem \ref{thm:HS-general}]
We first show that $\Xi(\Phi,\Psi; r,s)<\infty$ implies that $\Sha_{r,s}$  is of weak-type $(1,1)$. We shall assume that $\mu\in\mathcal{M}_\infty$ and the general case will be considered in Section \ref{section:general-mu} below. Let $\lambda>0$ be fixed and perform the Calder\'on-Zygmund decomposition in Theorem \ref{theor:CZ-d}.
Then,
\begin{align*}
\mu \{x \in \R^d: |\Sha_{r,s}  f(x)| > \lambda\} & \leq \mu\{ x\in \R^d : |\Sha_{r,s}  g(x)| >
\lambda/3\} + \mu(\Omega_\lambda) \\
&\quad\qquad + \mu\{ x\in \R^d\setminus \Omega_\lambda: |\Sha_{r,s}  b(x)| > \lambda/3\}  \\
&\quad\qquad+ \mu\{ x\in \R^d : |\Sha_{r,s}  \beta(x)| > \lambda/3\}\\
&= S_1 + S_2 +S_3 + S_4.
\end{align*}
Using the weak-type $(1,1)$ for $M_\dd$, Theorem \ref{theor:CZ-d} part $(a)$ and that $\Sha_{r,s}$ is bounded on $L^2(\mu)$ it is standard to check that
$$
S_1 + S_2  \leq \frac{C_{r,s}}{\lambda} \|f\|_{L^1(\mu)}.
$$
We next consider $S_3$. Let $x\in\R^d\setminus Q_j$ and observe that
\begin{multline}\label{eqn:Sha-bj}
|\Sha_{r,s} b_j(x)|
\le
\sup_{Q,R,S}|\alpha_{R,S}^Q|\sum_{Q\in\dd}\sum_{\begin{subarray}{c} R \in \dd _r(Q) \\ S \in \dd _s(Q) \end{subarray}} \big|\langle b_j,\phi_R \rangle\big| \,|\psi_S(x)|
\\
\lesssim
\sum_{Q_j\subsetneq Q\subset Q_j^{(r)}}\sum_{\begin{subarray}{c} R \in \dd _r(Q), R\subset Q_j \\ S \in \dd _s(Q) \end{subarray}} \big|\langle b_j,\phi_R \rangle\big| \,|\psi_S(x)|
.
\end{multline}
In the last inequality we have used that each non-vanishing term leads  to $Q_j\subsetneq Q\subset Q_j^{(r)}$ and $R\subset Q_j$ since $\phi_R$ is supported in $R$ and constant on the children of $R$, $b_j$ is supported in $Q_j$ and has vanishing integral, and $\psi_S$ is supported in $S$.  This, Chebyshev's inequality and Theorem \ref{theor:CZ-d} imply
\begin{multline*}
S_3
\le
\frac{3}{\lambda}\sum_j\int_{\R^d\setminus Q_j} |\Sha_{r,s}b_j|\,d\mu
\\
\lesssim
\frac{1}{\lambda}\sum_j\sum_{Q_j\subsetneq Q\subset Q_j^{(r)}}\sum_{\begin{subarray}{c} R \in \dd _r(Q), R\subset Q_j \\ S \in \dd _s(Q) \end{subarray}} \|b_j\|_{L^1(\mu)}\,\|\phi_R\|_{L^{\infty}(\mu)}\|\psi_S\|_{L^1(\mu)}
\\
\le
\frac{2^{(r+s)\,d}\,r}{\lambda}\, \Xi(\Phi,\Psi; r,s)\sum_j \|b_j\|_{L^1(\mu)}
\le
\frac{C_{r,s}}{\lambda}\,\|f\|_{L^1(\mu)}.
\end{multline*}

We finally estimate $S_4$. Let us observe that $\beta_j$ and $\phi_R$ have vanishing integral. Besides, $\beta_j$ is supported in $\widehat{Q}_j$ and constant on each dyadic child of $\widehat{Q}_j$, and $\phi_R$ is supported in $R$ and constant on each dyadic child of $R$. All these imply that $\langle \beta_j,\phi_R \rangle=0$ unless $R=\widehat{Q}_j$. Then,
\begin{multline}\label{eqn:Sha-betaj}
|\Sha_{r,s}\beta_j(x)|
\le
\sup_{Q,R,S}|\alpha_{R,S}^Q|\sum_{S\in\dd_s(Q_j^{(r+1)})} \big|\langle \beta_j,\phi_{\widehat{Q}_j} \rangle\big| \,|\psi_S(x)|
\\
\lesssim
\|\beta_j\|_{L^1(\mu)}\sum_{S\in\dd_s(Q_j^{(r+1)})} \|\phi_{\widehat{Q}_j}\|_{L^{\infty}(\mu)}\,|\psi_S(x)|.
\end{multline}
Therefore, Chebyshev's inequality and Theorem \ref{theor:CZ-d} imply
\begin{multline*}
S_4
\le
\frac{3}{\lambda}\sum_j\|\Sha_{r,s} \beta_j\|_{L^1(\mu)}
\\
\lesssim
\frac{1}{\lambda}\sum_j \|\beta_j\|_{L^1(\mu)}\!\!\!\sum_{S\in\dd_s(Q_j^{(r+1)})} \|\phi_{\widehat{Q}_j} \|_{L^\infty(\mu)}\|\psi_S\|_{L^1(\mu)}
\\
\le
\frac{2^{sd}}{\lambda}\Xi(\Phi,\Psi; r,s)
\sum_j \|\beta_j\|_{L^1(\mu)}
\le
\frac{C_{r,s}}{\lambda}\,\|f\|_{L^1(\mu)}.
\end{multline*}
Gathering the obtained estimates this part of the proof is complete.

We now turn to the converse, that is, we show that if a non-degenerate Haar shift $\Sha_{r,s}$ is of weak-type $(1,1)$ then $\Xi(\Phi,\Psi; r,s)<\infty$. For every $Q\in\dd_\Phi$, we pick $Q_\infty\in\dd_1(Q)$ such that $\phi_Q$ (which we recall that is constant on the dyadic children of $Q$) attains its maximum in $Q_\infty$. Define
$$
\widetilde{\varphi}_Q(x)
=
\big(\varphi_Q(x)-\langle \varphi_Q\rangle_Q\big)\,1_Q(x),
\quad
\varphi_Q(x)
=
\sgn\big(\phi_Q(x)\big)\frac{1_{Q_\infty}(x)}{\mu(Q_\infty)},
$$
where $\sgn(t)=t/|t|$ if $t\neq 0$ and $\sgn(0)=0$. We note that by construction $\widetilde{\varphi}_Q$  is supported on $Q$, constant on dyadic children of $Q$ and has vanishing integral. These imply that $\langle \widetilde{\varphi}_Q,\phi_{R}\rangle=0$ if $Q\neq R $. Also,
$$
\langle \widetilde{\varphi}_Q,\phi_Q\rangle
=
\langle \varphi_Q,\phi_Q\rangle
=
\frac{1}{\mu(Q_\infty)}\int_{Q_\infty} |\phi_Q(x)|\,d\mu(x)
=
\|\phi_Q\|_{L^\infty(\mu)},
$$
where we have used that $\phi_Q$ has vanishing integral and is constant on the dyadic children of $Q$. On the other hand,
$$
\|\widetilde{\varphi}_Q\|_{L^1(\mu)}
\le
2\int_{Q} |\varphi_Q(x)|\,d\mu(x)
=2.
$$

Let us now obtain that  $\Xi(\Phi,\Psi; r,s)<\infty$. In the definition of $\Xi(\Phi,\Psi; r,s)$ we may clearly assume that $R\in\dd_\Phi$ and $S\in\dd_\Psi$. Thus, we fix $Q_0\in\dd$, $R_0\in\dd_r(Q_0)$ and $S_0\in\dd_s(Q_0)$ with $\|\phi_{R_0}\|_{L^2(\mu)}=1$ and $\|\psi_{S_0}\|_{L^2(\mu)}=1$. We use the properties of the function $\widetilde{\varphi}_{R_0}$ just defined and the non-degeneracy of $\Sha_{r,s}$ to obtain that for every $x\in\R^d$
\begin{multline*}
|\Sha_{r,s} \widetilde{\varphi}_{R_0}(x)|
=
\Big|\sum_{Q \in \dd } \sum_{\begin{subarray}{c} R \in \dd _r(Q) \\ S \in \dd _s(Q) \end{subarray}} \alpha_{R,S}^Q\, \langle \widetilde{\varphi}_{R_0},\phi_{R}\rangle \psi_S(x)\Big|
\\
=
\|\phi_{R_0}\|_{L^\infty(\mu)}\,
\Big|\sum_{S \in \dd _s(Q_0) } \alpha_{R_0,S}^{Q_0}\psi_S(x)\Big|
\ge
\inf_{Q,R,S}|\alpha_{R,S}^{Q}| \,\|\phi_{R_0}\|_{L^\infty(\mu)}\, |\psi_{S_0}(x)|,
\end{multline*}
where we have used that $\dd _s(Q_0)$ is comprised of pairwise disjoint cubes. 
Using that $\Sha_{r,s}$ is of weak-type $(1,1)$ and that $\psi_{S_0}$ is constant on dyadic children of $S_0$, and \eqref{L1-weakL1} we obtain
\begin{multline*}
\|\phi_{R_0}\|_{L^\infty(\mu)}\,
\|\psi_{S_0}\|_{L^1(\mu)}
\approx
\big\|
\|\phi_{R_0}\|_{L^\infty(\mu)}\,\psi_{S_0}\big\|_{L^{1,\infty}(\mu)}
\\
\lesssim
\|\Sha_{r,s} \widetilde{\varphi}_{R_0}\|_{L^{1,\infty}(\mu)}
\lesssim
\|\widetilde{\varphi}_{R_0}\|_{L^1(\mu)}
\le 2.
\end{multline*}
This immediately implies that $\Xi(\Phi,\Psi; r,s)<\infty$.
\end{proof}

\begin{remark}\label{remark:HS-constant}
From the previous proof and a standard homogeneity argument on the parameter $\|\Sha_{r,s}\|_{L^2(\mu)\to L^{2}(\mu)}$ we obtain that, under the conditions of Theorem \ref{thm:HS-general},
\begin{multline*}
\|\Sha_{r,s}\|_{L^1(\mu)\to L^{1,\infty}(\mu)}\!
\le
C_0\,\big(\|\Sha_{r,s}\|_{L^2(\mu)\to L^{2}(\mu)}
\\
+
2^{s\,d}\,(r\,2^{r\,d}+1)\,\Xi(\Phi,\Psi; r,s)\sup_{Q,R,S} |\alpha_{R,S}^Q|\big),
\end{multline*}
where $C_0$ is a universal constant (independent of the dimension, for instance, in the previous argument one can safely take $C_0\le 217$.)
\end{remark}

\begin{remark}\label{remark:2-valued-Lp}
One can obtain an analog of Theorem \ref{thm:Hil} parts $(iii)$, $(iv)$ for non-degenerate Haar shift operators defined in terms of $2$-value Haar systems $\Phi$ and $\Psi$. To be more precise, let $\Sha_{r,s}$ be a non-degenerate Haar shift of complexity $(r,s)$ associated to two 2-value generalized Haar systems.  If $\Sha_{r,s}$ is of weak-type $(p,p)$ for some $1<p<2$ then $\Xi(\Phi,\Psi;r,s)<\infty$. The proof is very similar to what we did for the dyadic Hilbert transform. Fix $Q_0\in\dd$, $R_0\in\dd_r(Q_0)$, $S_0\in\dd_s(Q_0)$. Then, using that the cubes in $\dd _s(Q_0)$ are pairwise disjoint, 
\begin{multline*}
|\Sha_{r,s} \phi_{R_0}(x)|
=
\Big|\sum_{Q \in \dd } \sum_{\begin{subarray}{c} R \in \dd _r(Q) \\ S \in \dd _s(Q) \end{subarray}} \alpha_{R,S}^Q\, \delta_{R_0,R}\,\psi_S(x)\Big|
\\
=
\Big|\sum_{S \in \dd _s(Q_0) } \alpha_{R_0,S}^{Q_0}\psi_S(x)\Big|
\ge
\inf_{Q,R,S}|\alpha_{R,S}^{Q}|\, |\psi_{S_0}(x)|.
\end{multline*}
Using that $\Sha_{r,s}$ is of weak-type $(p,p)$ and that $\psi_{S_0}$ is constant on dyadic children of $S_0$ we obtain
$$
\|\psi_{S_0}\|_{L^p(\mu)}
\lesssim
\|\Sha_{r,s} \phi_{R_0}\|_{L^{p,\infty}(\mu)}
\lesssim
\|\phi_{R_0}\|_{L^p(\mu)}.
$$
Also, by \eqref{eqn:2-value-Haar}, \eqref{eqn:2-value-Haar:1-infty} and proceeding as in  \eqref{hI-Lp} we obtain
\begin{multline*}
\|\psi_{S_0}\|_{L^1(\mu)}^{1-\frac2{p'}}
\approx
m_\Psi(S_0)^{\frac12-\frac1{p'}}
\approx
\|\psi_{S_0}\|_{L^p(\mu)}
\\
\lesssim
\|\phi_{R_0}\|_{L^p(\mu)}
\approx
m_\Phi(R_0)^{\frac12-\frac1{p'}}
\approx
\|\phi_{R_0}\|_{L^\infty(\mu)}^{-(1-\frac2{p'})}.
\end{multline*}
This easily implies that $\Xi(\Phi,\Psi; r,s)<\infty$.
\end{remark}

\subsection{The case $\mu\in\mathcal{M}\setminus\mathcal{M}_\infty$}\label{section:general-mu}

The Calder\'on-Zygmund decomposition in Theorem \ref{theor:CZ-d} has been obtained under the assumption that every $d$-dimen\-sional quadrant has infinite $\mu$-measure, $\mu\in\mathcal{M}_\infty$ in the language of Section \ref{section:CZ}. Also, Theorems \ref{thm:Hil} and \ref{thm:HS-general} have been proved under this assumption. Here we discuss how to remove this constraint and work with arbitrary measures in $\mathcal{M}$.

Due to the nature of the standard dyadic grid, $\R^d$ splits naturally in $2^d$ components each of them being a $d$-dimensional quadrant.
Let $\R^d_k$, $1\le k\le 2^d$, denote the $d$-dimensional quadrants in $\R^d$: that is, the sets $\R^{\pm}\times\dots\times \R^{\pm}$ where $\R^+=[0,\infty)$ and $\R^-=(-\infty,0)$. Let $\dd^k$ be the collection of dyadic cubes contained in $\R^d_k$. We set
$$
M_{\dd^k} f(x) = \sup_{x\in Q\in \dd^k} \frac1{|Q|}\int_Q |f(y)| \,d\mu(y) = M_{\dd } \big(f\,1_{\mathbb{R}^d_k}\big)(x)\, 1_{\mathbb{R}^d_k}(x).
$$
Hence, given a function $f$  we have that
$$
f(x)=
\sum_{k=1}^{2^d} f(x)\,1_{\R^d_k}(x)
,\qquad
M_\dd f(x)
=
\sum_{k=1}^{2^d} M_{\dd^k}f(x)\,1_{\R^d_k}(x),
$$
and in each sum there is at most only one non-zero term. Because of this
decomposition, to extend our results it will suffice  to assume
that $f$ is supported in some $\R^d_k$ and obtain
the corresponding decompositions and estimates in $\R^d_k$.

Notice that if  $f$ is supported in $\R^d_k$, $M_\dd f=M_{\dd^k} f$ and this function is
supported in $\R^d_k$.   In particular, for any $\lambda>0$,
$$
\Omega_\lambda
=
\{x\in\R^d: M_\dd f(x)>\lambda\}
=
\{x\in\R^d_k: M_{\dd^k} f(x)>\lambda\},
$$
and so any decomposition of this set will consist of cubes in
$\dd^k$.    We modify our notation and define
$\langle f \rangle_{\R^d_k}=\frac1{\mu(\R^d_k)}\,\int_{\R^d_k}f\,d\mu$
if $\mu(\R^d_k)<\infty$ and
 $\langle f \rangle_{\R^d_k}=0$ if $\mu(\R^d_k)=\infty$.

The following result is the analog of Theorem \ref{theor:CZ-d}.
\begin{theorem}\label{theor:CZ-d:gral}
Given $1\le k\le 2^d$, $\mu\in\mathcal{M}$ and $f\in L^1(\mu)$ with $\supp f\subset\R^d_k$, so that for every $\lambda>\langle |f| \rangle_{\R^d_k}$ there exists a covering of $\Omega_\lambda = \{M_\dd  f > \lambda\}$ by maximal dyadic cubes $\{Q_j\}_j\subset\dd^k$. Then, we may find a decomposition $f = g + b + \beta$ with $g$, $b$ and $\beta$ as defined in Theorem \ref{theor:CZ-d} and satisfying the very same properties.
\end{theorem}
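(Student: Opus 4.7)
The plan is to observe that, once the existence of the maximal cubes is secured, the proof of Theorem \ref{theor:CZ-d} transcribes almost verbatim. The only role played in the original argument by the assumption $\mu\in\mathcal{M}_\infty$ is to guarantee, via $\langle |f|\rangle_Q\to 0$ as $\ell(Q)\to\infty$, that each $x\in\Omega_\lambda$ sits inside a dyadic cube which is maximal with respect to $\langle |f|\rangle>\lambda$. Since $f$ is supported in $\R^d_k$, it suffices to produce such cubes inside $\dd^k$. Fix $x\in\R^d_k$ and let $\{Q_n(x)\}_{n\ge 0}\subset\dd^k$ be its nested dyadic ancestors with $\ell(Q_n(x))=2^n$. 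Monotone convergence gives $Q_n(x)\uparrow\R^d_k$, whence $\mu(Q_n(x))\uparrow\mu(\R^d_k)$ and $\int_{Q_n(x)}|f|\,d\mu\uparrow\|f\|_{L^1(\mu)}$. Therefore $\langle |f|\rangle_{Q_n(x)}\to\langle |f|\rangle_{\R^d_k}$, with the convention that the limit is $0$ when $\mu(\R^d_k)=\infty$. For $\lambda>\langle |f|\rangle_{\R^d_k}$ and $x\in\Omega_\lambda$, the averages on the dyadic ancestors of $x$ are eventually strictly below $\lambda$, so the dyadic cube containing $x$ which is maximal among those satisfying $\langle |f|\rangle>\lambda$ is well-defined; these cubes form the disjoint collection $\{Q_j\}_j\subset\dd^k$ covering $\Omega_\lambda$.

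With the maximal cubes in hand, I would define $g$, $b$ and $\beta$ by the exact same formulas as in Theorem \ref{theor:CZ-d}. The support and mean-zero properties of each $b_j$ and $\beta_j$, as well as the summed bounds $\sum_j\|b_j\|_{L^1(\mu)}\le 2\|f\|_{L^1(\mu)}$ and $\sum_j\|\beta_j\|_{L^1(\mu)}\le 4\|f\|_{L^1(\mu)}$, use only the disjointness of $\{Q_j\}_j$ and the maximality condition $\langle |f|\rangle_{\widehat{Q}_j}\le\lambda<\langle |f|\rangle_{Q_j}$, both of which remain valid here. For the good part, the bound $\|g_1\|_{L^\infty(\mu)}\le\lambda$ follows from the dyadic Lebesgue differentiation theorem, which is available for any Borel measure on $\R^d$; the bound $\|g_2\|_{L^\infty(\mu)}\le\lambda$ is a direct consequence of maximality; and the $L^m(\mu)$ estimates for $g_3$ rest on Lemma \ref{lem:aux}, whose statement and proof make no use of any quadrant-measure hypothesis. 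Interpolation in $p$ then proceeds exactly as before.

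The main (modest) obstacle is truly just the existence of maximal cubes when $\mu(\R^d_k)<\infty$: the threshold $\lambda>\langle |f|\rangle_{\R^d_k}$ is precisely what rules out the pathological scenario in which $\langle |f|\rangle_Q>\lambda$ holds on an unbounded chain of ancestors of some $x\in\Omega_\lambda$, leaving that point not contained in any maximal cube. Once this threshold is imposed, the rest of the argument is a direct copy of the proof of Theorem \ref{theor:CZ-d}, modulo the purely notational replacement of $\dd$ by $\dd^k$ throughout.
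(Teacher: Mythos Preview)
Your proposal is correct and follows essentially the same approach as the paper: both arguments observe that the only role of the hypothesis $\mu\in\mathcal{M}_\infty$ is to guarantee the existence of maximal cubes, verify that $\langle |f|\rangle_Q\to\langle |f|\rangle_{\R^d_k}$ as $\ell(Q)\to\infty$ so that maximality is restored once $\lambda>\langle |f|\rangle_{\R^d_k}$, and then note that the remainder of the proof of Theorem~\ref{theor:CZ-d} carries over verbatim. Your write-up is in fact somewhat more explicit than the paper's in checking that each ingredient (disjointness, maximality, Lemma~\ref{lem:aux}) is insensitive to the quadrant-measure assumption.
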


\begin{proof}
If $\mu(\R^d_k)=\infty$, then the proof given above goes through without change.
If $\mu(\R^d_k)<\infty$, then in the notation used above, $\langle |f|\rangle_Q\rightarrow \langle |f|\rangle_{\R^d_k}<\lambda$ as $\ell(Q)\to\infty$ for $Q\in\dd^k$.  Hence, if $Q\in \dd^k$ is such that $\langle |f|\rangle_Q>\lambda$, then $Q$ must
be contained in a maximal cube with the same property.  Hence, we can easily form the collection of maximal cubes $\{Q_j\}_j\subset\dd^k$. We observe that this covering gives the right estimate for the level sets of $M_\dd f= M_{\dd^k} f$
if $\lambda>\langle |f|\rangle_{\R^d_k}$. For $0<\lambda\le \langle |f|\rangle_{\R^d_k}$ we immediately have
$$
\mu(\Omega_\lambda)
\le
\mu(\R^d_k)
\le
\frac1\lambda\,\int_{\R^d_k} |f(x)|\,d\mu(x).
$$
These in turn imply that $M_{\dd^j}$ is of weak-type $(1,1)$. From here we repeat the arguments in the proof Theorem \ref{theor:CZ-d} to complete the proof without change.
\end{proof}

\begin{proof}[Proof of Theorems \ref{thm:Hil} and \ref{thm:HS-general} for $\mu\in\mathcal{M}$]
We obtain the weak-type $(1,1)$ estimate for $\Sha_{r,s}$, the arguments for $H_\dd$ and $H_\dd^*$  are identical.

Suppose first that  $\supp f\subset\R^d_k$ with $1\le k\le 2^d$. If $\mu(\R^d_k)=\infty$, then the arguments above go through without change.
Assume otherwise that $\mu(\R^d_k)<\infty$. If $\lambda>\langle |f| \rangle_{\R^d_k}$ then we repeat the same proof using Theorem \ref{theor:CZ-d:gral} in place of Theorem \ref{theor:CZ-d}. If $0<\lambda\le \langle |f| \rangle_{\R^d_k}$ we cannot form the Calder\'on-Zygmund decomposition. Nevertheless, the estimate is immediate after observing that by construction $\Sha_{r,s} f$ is supported in $\R^d_k$ since so is $f$. Then,
$$
\mu(\{x\in\R^d: |\Sha_{r,s} f(x)|> \lambda\})
\le
\mu(\R^d_k)
\le
\frac1{\lambda}\int_{\R^d_k}|f(x)|\,d\mu(x).
$$

To prove the weak-type estimate in the general case, fix $f$ and write $f=\sum_{k=1}^{2^d} f\, 1_{\R^d_k}$. By construction we then have
$$
\Sha_{r,s} f(x)=
\sum_{k=1}^{2^d} \Sha_{r,s} (f\,1_{\R^d_k})(x)\,1_{\R^d_k}(x).
$$
Therefore, by the above argument applied to each $\R^d_k$, we conclude as desired
\begin{multline*}
\mu(\{x\in\R^d: |\Sha_{r,s} f(x)|> \lambda\})
=
\sum_{k=1}^{2^d} \mu(\{x\in\R^d_k: |\Sha_{r,s} (f\,1_{\R^d_k})(x)|> \lambda\})
\\
\lesssim
\frac1{\lambda}\sum_{k=1}^{2^d}\int_{\R^d_k}|f(x)|\,d\mu(x)
=
\frac1{\lambda} \int_{\R^d}|f(x)|\,d\mu(x).
\end{multline*}
\end{proof}

\begin{remark}
As explained above, the standard dyadic grid splits $\R^d$  in $2^d$ components, each of them being a $d$-dimensional quadrant. These components are defined with respect to the property that if a given cube is in a fixed component, all of its relatives (ascendants and descendants) remain in the same component. This connectivity property depends on the dyadic grid chosen, and one can find other dyadic grids with other number of components. Let us work for simplicity in $\R$ and suppose that we want to find dyadic grids ``generated'' by $I_0=[0,1)$. We need to give the ascendants of $I_0$, say $I_{k}$, $k\le -1$. Once we have them, we translate each $I_k$ by $j\,2^{-k}$ with $j\in\Z$ and these define the cubes of the fixed generation $2^{-k}$ for $k\le 0$. The small cubes are obtained by subdivision. Hence, in the present scenario, we only need to define the $I_k$'s. Let us start by finding the parent of $I_0$: we just have two choices $[0,2)$ or $[-1,1)$, and once we choose one, which we call $I_{-1}$, we need to pass to the next level and decide which is the parent of $I_{-1}$, for which again we have two choices. Continuing this we have a sequence of cubes $I_k$, $k\le 0$, which determines the dyadic grid. In the classical dyadic grid one always choose the parent of $I_k$ ``to the right'', that is, so that $I_k$ is the left half of $I_{k-1}$. This eventually gives two components. One way to obtain a dyadic grid with one component is to alternatively take parents ``to the left'' and ``to the right''. That is, if we take $I_0=[0,1)$, $I_{-1}=[-1,1)$, $I_{-2}=[-1,3)$, $I_{-3}=[-5,3)$, \dots. we obtain one component. More precisely, take the family of intervals $I_k=[0, 2^{-k})$ for $k\ge 0$ and for $k\le -1$ let $I_k=[a_k-2^{-k},a_k)$ with $a_k=(2^{-k}+1)/3$ if $-k$ is odd and $a_k=(2^{-k+1}+1)/3$ if $-k$ is even. Notice that $\{I_k\}_{k\in\Z}$ is a decreasing family of intervals of dyadic side-length. Notice that each $I_k$ is one of the halves of $I_{k-1}$. Using $I_k$ we generate the dyadic cubes of generation $2^{-k}$ by taking the intervals $I_{j,k}=j\,2^{-k}+I_k$ with $j\in\Z$. Finally we set $\widetilde{\mathscr{D}}=\{I_{j,k}: j,k\in\Z\}$. This is clearly a dyadic grid in $\R$. Let us observe that $a_k\to \infty$ and $a_k-2^{-k}\to-\infty$ as $k\to -\infty$ and therefore $I_k\nearrow\R$ as $k\to-\infty$. This means that this dyadic grid induces  just one component (in the sense described above) since for any $I_1$, $I_2\in\widetilde{\mathscr{D}}$ we can find a large $k$ such that both $I_1$ and $I_2$ are contained in $I_{-k}\in \widetilde{\mathscr{D}}$.  We finally observe that the dyadic grids with one component occur more often than those with two,  as the classical dyadic grid. Indeed, if at each generation we select randomly the parent (among the possibilities ``to the left'' and ``to the right''), the probability of ending with a system with one component is $1$.
\end{remark}

\section{Examples of measures and Haar systems}
\label{section:examples}

\subsection{The $1$-dimensional case}\label{section:examples:1D}

As we have seen above the $1$-dimensional case is somehow special since the Haar system is ``uniquely'' determined. Let us work with the measures in Theorem \ref{thm:Hil}, that is, $\mu$ is a Borel measure in $\R$ with $0<\mu(I)<\infty$ for every $I\in\dd$. As we have seen in that result, $m$-increasing, $m$-decreasing and $m$-equilibrated measures are the ones governing the boundedness of $H_\dd$, $H_\dd^*$ and Haar shift operators. We are going to describe some examples of non-standard measures satisfying those conditions.

We can easily obtain examples of $m$-equilibrated measures. Let $\mu$ be a dyadically doubling measure, i.e., $\mu(\widehat{I}\,) \lesssim \mu(I)$ for all $I \in \dd$ where $\widehat{I}$ is the dyadic parent of $I$. Then, $m(I) \approx \mu(I)$ and clearly $\mu$ is $m$-equilibrated. This applies straightforwardly to the Lebesgue measure.

We next construct some measures that are $m$-increasing, $m$-decreasing or $m$-equilibrated without being dyadically doubling or of polynomial growth.
Set $d\nu = dx 1_{\mathbb{R}\setminus [0,1)} + d\mu$, where $\mu$ is a measure supported on the interval $[0,1)$ defined as follows. Let $\{I_k\}_{k \geq 0}$ be the decreasing sequence of dyadic intervals $I_k = [0,2^{-k})$ and let $\{a_k\}_{k \geq 1}$ be such that $0 < a_k< 1$ and $a_1 = 1/2$. Set $b_k=1-a_k$.  Define $\mu$ recursively by setting $\mu(I_0) = 1$ and
\begin{equation}
\label{eq:lasymm}
\mu(I_k) = a_k \mu(\widehat{I}_k)=a_k\mu(I_{k-1}) \quad \mbox{and} \quad \mu(I_k^b) = b_k\mu(\widehat{I}_k)=b_k\mu(I_{k-1})\,,
\end{equation}
for $k \geq 1$, where we recall that $I_k^b=[2^{-k}, 2^{-k+1})$ is the dyadic brother of $I_k$. On $I_k^b$, $\mu$ is taken to be uniform, i.e., $\mu(J) = \mu(I_k^b)\,|J|/|I_k^b|$ for any $J \in \dd$, $J\subset I_k^b$. We illustrate this procedure in Figure \ref{figure:mu}.

\begin{figure}
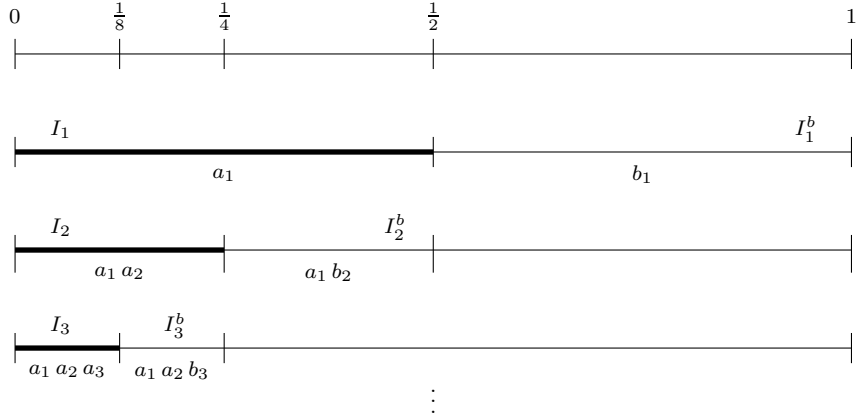

\begin{pgfpicture}{-6cm}{-6.5cm}{6cm}{0cm}


\pgfline{\pgfxy(-5.5,-1.5)}{\pgfxy(5.5,-1.5)}

\pgfline{\pgfxy(-5.5,-2.8)}{\pgfxy(5.5,-2.8)}

\pgfline{\pgfxy(-5.5,-4.1)}{\pgfxy(5.5,-4.1)}

\pgfline{\pgfxy(-5.5,-5.4)}{\pgfxy(5.5,-5.4)}


\pgfline{\pgfxy(-5.5,-1.7)}{\pgfxy(-5.5,-1.3)}
\pgfline{\pgfxy(-4.125,-1.7)}{\pgfxy(-4.125,-1.3)}
\pgfline{\pgfxy(-2.75,-1.7)}{\pgfxy(-2.75,-1.3)}
\pgfline{\pgfxy(0,-1.7)}{\pgfxy(0,-1.3)}
\pgfline{\pgfxy(5.5,-1.7)}{\pgfxy(5.5,-1.3)}


\pgfline{\pgfxy(-5.5,-3)}{\pgfxy(-5.5,-2.6)}
\pgfline{\pgfxy(0,-3)}{\pgfxy(0,-2.6)}
\pgfline{\pgfxy(5.5,-3)}{\pgfxy(5.5,-2.6)}


\pgfline{\pgfxy(-5.5,-4.4)}{\pgfxy(-5.5,-3.9)}
\pgfline{\pgfxy(-2.75,-4.4)}{\pgfxy(-2.75,-3.9)}
\pgfline{\pgfxy(0,-4.4)}{\pgfxy(0,-3.9)}
\pgfline{\pgfxy(5.5,-4.4)}{\pgfxy(5.5,-3.9)}


\pgfline{\pgfxy(-5.5,-5.6)}{\pgfxy(-5.5,-5.2)}
\pgfline{\pgfxy(-4.125,-5.6)}{\pgfxy(-4.125,-5.2)}
\pgfline{\pgfxy(-2.75,-5.6)}{\pgfxy(-2.75,-5.2)}
\pgfline{\pgfxy(5.5,-5.6)}{\pgfxy(5.5,-5.2)}


\pgfputat{\pgfxy(-5.5,-1)}{\pgfbox[center,center]{\scriptsize$0$}}
\pgfputat{\pgfxy(-4.125,-1)}{\pgfbox[center,center]{\scriptsize$\frac18$}}
\pgfputat{\pgfxy(-2.75,-1)}{\pgfbox[center,center]{\scriptsize$\frac14$}}
\pgfputat{\pgfxy(0,-1)}{\pgfbox[center,center]{\scriptsize$\frac12$}}
\pgfputat{\pgfxy(5.5,-1)}{\pgfbox[center,center]{\scriptsize$1$}}

\pgfputat{\pgfxy(-2.75,-3.1)}{\pgfbox[center,center]{\scriptsize$a_1$}}
\pgfputat{\pgfxy(2.75,-3.1)}{\pgfbox[center,center]{\scriptsize$b_1$}}

\pgfputat{\pgfxy(-4.9,-2.5)}{\pgfbox[center,center]{\scriptsize$I_1$}}
\pgfputat{\pgfxy(4.9,-2.5)}{\pgfbox[center,center]{\scriptsize$I_1^b$}}

\pgfputat{\pgfxy(-4.125,-4.4)}{\pgfbox[center,center]{\scriptsize$a_1\,a_2$}}
\pgfputat{\pgfxy(-1.375,-4.4)}{\pgfbox[center,center]{\scriptsize$a_1\,b_2$}}

\pgfputat{\pgfxy(-4.9,-3.8)}{\pgfbox[center,center]{\scriptsize$I_2$}}
\pgfputat{\pgfxy(-0.5,-3.8)}{\pgfbox[center,center]{\scriptsize$I_2^b$}}

\pgfputat{\pgfxy(-4.8125,-5.7)}{\pgfbox[center,center]{\scriptsize$a_1\,a_2\,a_3$}}
\pgfputat{\pgfxy(-3.4375,-5.7)}{\pgfbox[center,center]{\scriptsize$a_1\,a_2\,b_3$}}

\pgfputat{\pgfxy(-4.9,-5.1)}{\pgfbox[center,center]{\scriptsize$I_3$}}
\pgfputat{\pgfxy(-3.4,-5.1)}{\pgfbox[center,center]{\scriptsize$I_3^b$}}

\pgfputat{\pgfxy(0,-6)}{\pgfbox[center,center]{\scriptsize$\vdots$}}

\pgfsetlinewidth{2pt}

\pgfline{\pgfxy(-5.5,-2.8)}{\pgfxy(0,-2.8)}

\pgfline{\pgfxy(-5.5,-4.1)}{\pgfxy(-2.75,-4.1)}

\pgfline{\pgfxy(-5.5,-5.4)}{\pgfxy(-4.125,-5.4)}
\end{pgfpicture}
\\
\caption{Construction of $\mu$}\label{figure:mu}
\end{figure}

By construction, if  $I\cap I_0=\emptyset$ or $I_0\subset I$ we have
$$
\frac{m(I)}{m(\widehat{I}\,)}
=
\frac{|I|/4}{|\widehat{I}|/4}
=
\frac12
$$
Also, if  $I \in \dd$ and $\widehat{I}\subset I_k^b$ for some $k\ge 1$ then
$$
\frac{m(I)}{m(\widehat{I}\,)}
=
\frac
{\frac{\mu(I_k^b)\,|I|}{4\,|I_k^b|}}{\frac{\mu(I_k^b)\,|\widehat{I}|}{4\,|I_k^b|}}
=
\frac12
$$
In the remainder cases we always have that $\widehat{I}=\widehat{I}_k$ for some $k\ge 1$ and $I$ is either $I_k$ or $I_k^b$. Note that by \eqref{eq:lasymm} we get
\begin{align*}
m(I_k^b) &=
\frac{\mu\big((I_{k}^b)_-\big)\mu\big((I_{k}^b)_+\big)}{\mu(I_k^b)} = \frac{\mu(I_k^b)}{4}= \frac{1}{4}\,b_k\,\mu(\widehat{I}_k),
\\[4pt]
m(I_k)
&=
\frac{\mu\big((I_{k})_-\big)\mu\big((I_{k})_+\big)}{\mu(I_k)}
=
\frac{\mu(I_{k+1})\mu(I_{k+1}^b)}{\mu(I_k)}
=
a_{k+1}\,b_{k+1}\,a_k\,\mu(\widehat{I}_k),
\\[4pt]
m(\widehat{I}_k) &= \frac{\mu(I_k)\mu(I_k^b)}{\mu(\widehat{I}_k)} = a_k\, b_k\, \mu(\widehat{I}_k).
\end{align*}
Hence,
\begin{equation}
\label{eq:ems}
\frac{m(I_k)}{m(\widehat{I}_k)} = \frac{a_{k+1}b_{k+1}}{b_k} \qquad \textrm{and} \qquad \frac{m(I_k^b)}{m(\widehat{I}_k)} = \frac{1}{4a_k}.
\end{equation}

We now proceed to study the previous ratios associated to measures given by particular choices of the defining sequences $\{a_k\}_k$ and $\{b_k\}_k$. We shall construct three non-dyadically doubling and of non-polynomial growth  measures. In the first example $\mu$ is $m$-equilibrated, in the second $\mu$ is $m$-increasing and is not $m$-decreasing, in the third $\mu$ is $m$-decreasing and is not $m$-increasing. Finally, in the last example we give a measure $\mu$ which is of polynomial growth but is neither dyadically doubling, nor $m$-increasing, nor $m$-decreasing.

\begin{list}{$(\theenumi)$}{\usecounter{enumi}\leftmargin=.8cm
\labelwidth=.8cm\itemsep=0.2cm\topsep=.1cm
\renewcommand{\theenumi}{\alph{enumi}}}

\item  Let $b_k = \frac{1}{k}$ for $k\geq 2$. The measure $\mu$ is \textbf{non-dyadically doubling} since by \eqref{eq:lasymm}, if $k \ge 2$
$$
\frac{\mu(\widehat{I}_k)}{\mu(I_k^b)} = \frac1{b_k}=k \underset{k \to \infty}{\longrightarrow} \infty.
$$
From substituting  $a_k$ and $b_k$ in \eqref{eq:ems} we get that,
$$
\frac{m(I_k)}{m(\widehat{I}_k)} = \Bigl(1 - \frac{1}{k+1}\Bigr) \frac{k}{k+1}, \qquad\quad \frac{m(I_k^b)}{m(\widehat{I}_k)} = \frac{1}{4 \Bigl(1 - \frac{1}{k}\Bigr)}.
$$
Both sequences are bounded from above and from below, which implies that  $\mu$ is \textbf{$m$-equilibrated}. Besides, for $0 < t < \infty$
$$
\frac{\mu(I_k)}{|I_k|^t} = \frac{a_1\ldots a_k}{2^{-kt}} = \frac{1}{2}\frac{2^{kt}}{k}  \underset{k \to \infty}{\longrightarrow} \infty.
$$
Thus, $\mu$ does \textbf{not have polynomial growth}.

\item  Set $b_k = 2^{-k^2}$. In this case $\mu$ is \textbf{non-dyadically doubling}, since by \eqref{eq:lasymm}
$$
\frac{\mu(\widehat{I}_k)}{\mu(I_k^b)} =2^{k^2} \underset{k \to \infty}{\longrightarrow} \infty.
$$
Since $\frac{1}{2} \leq a_k < 1$, by \eqref{eq:ems} we get that $m(\widehat{I}_k) \approx m(I_k^b)$. However,
$$
4 < \frac{m(\widehat{I}_k)}{m(I_k)} = \frac{2^{-k^2}}{\bigl(1 - 2^{-(k + 1)^2}\bigr) 2^{- (k + 1)^2}} \underset{k \to \infty}{\longrightarrow} \infty.
$$
Thus, $\mu$ is \textbf{$m$-increasing} but is \textbf{not $m$-decreasing}. Notice that for $t > 1$,
$$
\frac{\mu(I_k)}{|I_k|^t} = \frac{a_1\ldots a_k}{2^{-kt}} = 2^{kt}\prod_{j=1}^k \bigl(1 - 2^{-j^2}\bigr) \geq 2^{kt}\Bigl(1 - \frac{1}{2}\Bigr)^k = 2^{k(t-1)}  \underset{k \to \infty}{\longrightarrow} \infty.
$$
For $0 < t \leq 1$, let $n$ and $m$ be positive integers such that $\frac{1}{n + 1} < t \leq \frac{1}{n}$ and $k = 2(n + 1)m$. Then, $2^{kt} > 2^{2m}$ and
\begin{multline*}
\frac{\mu(I_k)}{|I_k|^t}
\ge \Big(2^m\prod_{j=1}^m \bigl(1 - 2^{-j^2}\bigr)\Big)\cdot  \Big(2^m\prod_{j=m+1}^k \bigl(1 - 2^{-j^2}\bigr)\Big)
\ge
2^m\prod_{j=m+1}^k \bigl(1 - 2^{-j^2}\bigr)
\\
\ge
2^m\bigl(1 - 2^{-m^2}\bigr)^{k - m}
= \Bigl(2\bigl(1 - 2^{-m^2}\bigr)^{(2(n+1) - 1)}\Bigr)^m\underset{m \to \infty}{\longrightarrow} \infty
\end{multline*}
Thus, $\mu$ does \textbf{not have polynomial growth}.

\item Let $n \in \mathbb{N}$ and set $f(n) = \frac{n(n+1)}{2}$.
For $k \geq 2$ define
$$
b_k = \frac{1}{2}\,\frac{1}{k - f(n-1)},
$$
where $n\ge 2$ is such that $f(n-1) < k \leq f(n)$. Fix $n \geq 2$ and $f(n-1) < k \leq f(n)$. Then $k = f(n-1) + r$, with $1 \leq r \leq f(n) - f(n-1) = n$ and $b_k = 1/(2r)$. Hence,
$$
\frac{1}{2n} \leq b_k \leq \frac{1}{2}.
$$
and $\liminf_{k \to \infty} b_k = 0$. By \eqref{eq:lasymm} this choice of $b_k$ defines a \textbf{non-doubling} measure. Since $\frac{1}{2} \leq a_k < 1$, by \eqref{eq:ems} we get that $m(\widehat{I}_k) \approx m(I_k^b)$ for every $k$. On the other hand,
$$
\frac{b_{k+1}}{b_k}
=
\begin{cases}
\displaystyle \frac{k - f(n-1)}{k+1 - f(n-1)} = \frac{r}{r+1} \approx 1, & \textrm{if $k < f(n)$;} \\[12pt]
\displaystyle \frac{k - f(n-1)}{k+1 - f(n)} = n \to \infty, & \textrm{if $k = f(n)$.}
\end{cases}
$$
Hence, by \eqref{eq:ems} $\mu$ is \textbf{not $m$-increasing}. However, $\mu$ is \textbf{$m$-decreasing} since ${b_k}/{b_{k+1}} \leq 2$.

We finally see that $\mu$ has no polynomial growth. We start with the case $t > 1$. For $s,j \geq 2$ such that $f(s-1) < j = f(s-1) + r \leq f(s)$  with $1\le r\le s$, we have that $a_j = \frac{2r-1}{2r}$. Then, if $k=f(n)$
\begin{multline*}
\frac{\mu(I_k)}{|I_k|^t} = \frac{a_1\ldots a_k}{2^{-kt}}
=
2^{kt}
\prod_{s=1}^n \prod_{r=1}^s \frac{2r-1}{2r}
=
2^{k(t-1)} \prod_{s=1}^n \prod_{r=1}^s \frac{2r-1}{r} \geq 2^{k(t-1)}  \\
=
2^{f(n)\,(t-1)}
\underset{n \to \infty}{\longrightarrow} \infty.
\end{multline*}
Consider now $0 < t \leq 1$ and let $m \geq 2$ be the unique integer such that $\frac{2}{f(m)} < t \le \frac{2}{f(m-1)}$. Let $k=f(n)$ with $n$ large enough  so that $k \geq f(m)^2$. Then $2^{kt} \geq 2^{2f(m)}$ and
\begin{align*}
\frac{\mu(I_k)}{|I_k|^t} &
\ge
\Big(2^{f(m)}\prod_{s=1}^m \prod_{r=1}^s \frac{2r-1}{2r}\Big)
\cdot
\Big(2^{f(m)}\prod_{s=m+1}^n \prod_{r=1}^s \frac{2r-1}{2r}\Big)
\\
&\geq 2^{f(m)}\prod_{s=m+1}^n \prod_{r=1}^s \frac{2r-1}{2r} \\
&= 2^{f(m)}\prod_{s=m+1}^n \frac{(2s)!}{2^{2s}(s!)^2} \\
&= 2^{f(m)}2^{-2(f(n) - f(m))}\prod_{s=m+1}^n \frac{(2s)!}{(s!)^2} \\
&\geq 2^{f(m)}2^{-2(f(n) - f(m))}2^{3(f(n) - f(m))}
= 2^{f(n)}\underset{n \to \infty}{\longrightarrow} \infty,
\end{align*}
where in the last inequality we have used that $(2s)!/(s!)^2$ is increasing and therefore bounded from below by 8.
Thus, $\mu$ does \textbf{not have polynomial growth}.

\item Let $b_2=b_3=1/2$, and for every $k\ge 2$, $b_{2k}=1/k$, $b_{2k+1}=1-1/k$. The measure $\mu$ is \textbf{non-dyadically doubling} since by \eqref{eq:lasymm}, if $k \ge 2$, then
$$
\frac{\mu(\widehat{I}_{2k})}{\mu(I_{2k}^b)} = \frac1{b_{2k}}=k \underset{k \to \infty}{\longrightarrow} \infty.
$$
From substituting  $a_k$ and $b_k$ in \eqref{eq:ems} we get that,
$$
\frac{m(I_{2k+1}^b)}{m(\widehat{I}_{{2k+1}})} = \frac{1}{4 a_{2k+1}}=\frac{k}{4}\underset{k \to \infty}{\longrightarrow} \infty,
$$
which implies that $\mu$ is \textbf{not $m$-increasing}. Also,
$$
\frac{m(\widehat{I}_{{2k+1}})}{m(I_{2k+1})} = \frac{b_{2k+1}}{a_{2(k+1)}\,b_{2(k+1)}}=\frac{(k+1)^2\,(k-1)}{k^2}\underset{k \to \infty}{\longrightarrow} \infty,
$$
which implies that $\mu$ is \textbf{not $m$-decreasing}.

We finally see that \textbf{$\mu$ has linear growth}, that is, $\mu(I)/|I|\le C$ for every $I$. We first notice that it suffices to consider $I\in\dd$ since  any arbitrary interval $J$ can be covered by a bounded number of $I\in\dd$ with $|I|\approx|J|$. Let us now fix $I\in\dd$. The cases $I\cap[0,1)=\emptyset$ or $[0,1)\subset I$ are trivial since $\mu(I)=|I|$. Suppose next that $I\subsetneq [0,1)$. Then, either $I=I_k$ or $I\subset I_k^b$ for some $k\ge 1$. In the latter scenario we have that by construction $\mu(I)/|I|=\mu(I_k^b)/|I_k^b|$, therefore we only have to consider  $I=I_k$ or $I=I_k^b$ for $k$ large. Let us fix $k\ge 6$. Notice that
$$
\frac{\mu(I_k^b)}{|I_k^b|}
=
\frac{\mu(I_k)}{|I_k|}\,\frac{b_k}{a_k}
=
\frac{\mu(I_{k-1})}{|I_{k-1}|}\,2\,b_k.
$$
Thus,
$$
\frac{\mu(I_{2k}^b)}{|I_{2k}^b|}
=
\frac{\mu(I_{2k})}{|I_{2k}|}\,\frac{b_{2k}}{a_{2k}}
\le
\frac{\mu(I_{2k})}{|I_{2k}|},
\qquad
\frac{\mu(I_{2k+1}^b)}{|I_{2k+1}^b|}
=
\frac{\mu(I_{2k})}{|I_{2k}|}\,2\,b_{2k+1}
\le
2
\frac{\mu(I_{2k})}{|I_{2k}|}.
$$
Additionally,
$$
\frac{\mu(I_{2k+1})}{|I_{2k+1}|}
=
\frac{\mu(I_{2k})}{|I_{2k}|}\,2\,a_{2k+1}
\le
2\,\frac{\mu(I_{2k})}{|I_{2k}|}.
$$
All these together show that it suffices to bound $\mu(I_{2k})/|I_{2k}|$ for $k\ge 3$. Let $k\ge 3$, then we obtain  as desired
$$
\mu(I_{2k})
=
\prod_{j=1}^{2k}a_j
=
2^{-3}\Big(\prod_{j=2}^k a_{2j}\Big)\,\Big(\prod_{j=2}^{k-1} a_{2j+1}\Big)
=
2^{-3}\frac1{k!}\le
\frac43\, 2^{-2k}
=
\frac43\, |I_{2k}|.
$$

\end{list}

\subsection{The higher dimensional case: specific Haar system constructions}\label{section-Haar-systems}

As we have shown in Theorem \ref{thm:HS-general}, the weak-type $(1,1)$ estimate for Haar shifts is governed by the finiteness of the quantities $\Xi(\Phi, \Psi;r,s)$.
In the $1$-dimensional case, these can be written only in terms of the measure $\mu$ since the Haar system $\mathcal{H}$ is ``unique'' (see Remark \ref{remarks:def-GHS}). However in higher dimensions
we have different choices of the Haar system and each of them may lead to a different condition. Therefore, before getting into that let us construct some specific Haar systems.


Among the $\mu$-Haar systems in higher dimensions, two of them are relatively easy to construct: Wilson's Haar system and Mitrea's Haar system \cite{wilson2002}, \cite{dragicevic-petermichle-volberg2006}, \cite{chung2011}, \cite{mitrea1994}, \cite{hytonen2012a}. Following \cite{girardi-sweldens1997}, we present a simplified way of obtaining this two $\mu$-Haar systems for measures $\mu \in \mathcal{M}$.

To construct Wilson's Haar system, start with some enumeration $(Q_j)_{j=1}^{2^d}$ of the dyadic children of $Q$ and build a dyadic (or logarithmic) partition tree on it. The partition is given as follows: set $\mathscr{W}_0(Q) = \{\{1, 2, \dots, 2^d \}\}$ and let $\mathscr{W}_1(Q) = \{ \{1,\dots, 2^{d-1}\},\{2^{d-1}+1,\dots, 2^d\}\}$. Proceed recursively to get the partition $\mathscr{W}_{k}(Q)$, obtained upon halving the elements of $\mathscr{W}_{k-1}(Q)$ and ending up with $\mathscr{W}_d(Q) = \{\{1\}, \{2\}, \ldots, \{2^d \}\}$. Set
$$
E_Q^\omega = \bigcup_{j \in \omega} Q_j
\qquad\qquad\mbox{with}\qquad\qquad
\omega\in \mathscr{W}(Q) = \bigcup_{k=0}^{d-1} \mathscr{W}_{k}(Q).
$$

We are going to see that the family of sets $\{E_Q^{\omega}\}_{\omega\in \mathscr{W}(Q)}$ behaves like a $1$-dimensional dyadic grid. Form construction, any $\omega \in \mathscr{W}_{k-1}(Q)$, $1 \leq k \leq d$,  has two disjoint children $\omega_-, \omega_+ \in \mathscr{W}_{k}(Q)$ such that $\omega = \omega_- \cup\, \omega_+$. Thus, following the notation of the $1$-dimensional case, we write $(E_Q^\omega)_-=E_Q^{\omega_-}$ and $(E_Q^\omega)_+=E_Q^{\omega_+}$. Note that these two sets are disjoint and $E_Q^\omega = (E_Q^{\omega})_- \cup (E_Q^{\omega})_+$. We call $(E_Q^{\omega})_-$ and $(E_Q^{\omega})_+$ the dyadic children of $E_Q^\omega$.
Besides, for every $\omega \in \mathscr{W}_k(Q)$, $1\le k\le d$, there exists a unique $\widehat{\omega} \in \mathscr{W}_{k-1}(Q)$ such that $\widehat{\omega} \supset \omega$ and thus $E_Q^\omega\subset \widehat{E_Q^\omega}=E_Q^{\widehat{\omega}}$. We call $\widehat{E_Q^\omega}$ the dyadic parent of $E_Q^\omega$.  Moreover, $E_Q^{\omega}$ and $E_Q^{\omega'}$ are either disjoint or one is contained in the other.

We define the Haar functions adapted to the family of sets $\{E_Q^{\omega}\}_{\omega\in \mathscr{W}(Q)}$: for every $\omega\in \mathscr{W}(Q)$ we set
$$
h_Q^\omega = \sqrt{m(E_Q^w)}\Biggl(\frac{1_{(E_Q^\omega)_-}}{\mu((E_Q^\omega)_-)} - \frac{1_{(E_Q^\omega)_+}}{\mu((E_Q^\omega)_+)}\Biggr),
$$
where
\begin{multline*}
m(E_Q^\omega)
=
\frac{\mu((E_Q^\omega)_-)\,\mu((E_Q^\omega)_+)}{\mu(E_Q^\omega)}
=
\Biggl(\frac{1}{\mu((E_Q^\omega)_-)} + \frac{1}{\mu((E_Q^\omega)_+)}\Biggr)^{-1}
\\
\approx
\min\big\{\mu((E_Q^\omega)_-), \mu((E_Q^\omega)_+) \big\}.
\end{multline*}
Note that this makes sense provided $\mu((E_Q^\omega)_-)\,\mu((E_Q^\omega)_+)>0$. For otherwise, we set $h_Q^\omega\equiv 0$.

Note that for a fixed $Q\in\dd$  and  $\omega\in \mathscr{W}(Q)$, one can easily verify that $h_Q^\omega$ satisfies the properties $(a)$--(d) in Definition \ref{def:GHS}. Let us further observe that $h_Q^\omega$ is orthogonal to $h_Q^{\omega'}$ for $\omega\neq\omega'$. We would like to emphasize that here we have $2^{d}-1$ generalized Haar functions associated to each $Q$ (one for each $\omega\in \mathscr{W}(Q)$). In this way, if for every $Q$ we pick $\omega_Q\in\mathscr{W}(Q)$, we have that $\{h_{Q}^{\omega_Q}\}_{Q\in\dd}$ is a $2$-value generalized Haar system in $\R^d$ (see Definition \ref{def:GHS} and Remark \ref{remarks:def-GHS}) and therefore standard (see \eqref{def:standard}).

\smallskip

Mitrea's Haar system is constructed in the following way. Let us fix an enumeration $(Q_j)_{j=1}^{2^d}$ of the dyadic children of $Q$. For every $2\le j \le 2^d$ we set $\widetilde{Q}_j=\cup_{k=j}^{2^d} Q_k$. We define Mitrea's Haar system as follows: for every $1\le j\le 2^{d}-1$ we set
$$
H_Q^j  = \sqrt{m(Q_j)}\Biggl(\frac{1_{Q_j}}{\mu(Q_j)} - \frac{1_{\widetilde{Q}_{j+1}}}{\mu(\widetilde{Q}_{j+1})}\Biggr),
$$
where
$$
m(Q_j)
=
\frac{\mu(Q_j)\,\mu(\widetilde{Q}_{j+1})}{\mu(\widetilde{Q}_{j})}
=
\Biggl(
\frac{1}{\mu(Q_j)} + \frac{1}{\mu(\widetilde{Q}_{j+1})}\Biggr)^{-1}
\!\approx \min\{\mu(Q_j),\mu(\widetilde{Q}_{j+1})\}.
$$
This definition makes sense provided $\mu(Q_j)\,\mu(\widetilde{Q}_{j+1})>0$. For otherwise, we set $H_Q^j\equiv 0$.

Again, for a fixed $1\le j\le 2^d-1$ and  $Q\in\dd$, one can easily verify that $H_Q^j$ satisfies the properties $(a)$--$(d)$ in Definition \ref{def:GHS} and also that $H_Q^j$ is orthogonal to $H_Q^{j'}$ for $j\neq j'$. As before, we have $2^{d}-1$ generalized Haar functions associated to each $Q$ (one for each $j$). Hence,  if for every $Q$ we pick $j_Q$, $1\le j_Q\le 2^d-1$, we have that $\{H_{Q}^{j_Q}\}_{Q\in\dd}$ is a $2$-value generalized Haar system in $\R^d$ (see Definition \ref{def:GHS} and Remark \ref{remarks:def-GHS}) and therefore standard (see \eqref{def:standard}).

We finally present another way to construct Haar systems in the spirit of the wavelet construction. For this example, we assume that $\mu$ is a product measure, that is, $\mu=\mu_1\times\dots\times\mu_d$ where $\mu_1,\dots,\mu_d$ are Borel measures in $\R$ satisfying $\mu_j(I)<\infty$ for every $I\in\dd$. We will use the following notation, given $Q\in\dd(\R^d)$ we have that $Q=I_1^Q\times\dots\times I_d^Q$ with $I_j^Q\in\dd(\R)$. Hence, $\mu(Q)=\prod_{j=1}^d \mu_j(I_j^Q)$. Associated to each $\mu_j$ we consider a $\mu_j$-generalized Haar system $\Phi_{j}=\{\phi_{j,I}^1\}_{I\in\dd(\R)}$. For every $I\in\dd(\R)$ with $\mu_j(I)>0$ we set $\phi_{j, I}^{0}=1_{I}/\mu_j(I)^{\frac12}$ and $\phi_{j, I}^{0}\equiv 0$  otherwise. For every $\epsilon=(\epsilon_1,\dots,\epsilon_d)\in\{0,1\}^d\setminus\{0\}^d$ and $Q\in\dd(\R^d)$ we define
$$
\phi_Q^\epsilon(x)
=
\prod_{j=1}^d \phi_{j, I_j^Q}^{\epsilon_j}(x_j).
$$
We have that each $\phi_Q^\epsilon$ satisfies the properties $(a)$--$(d)$ in Definition \ref{def:GHS} and also that $\phi_Q^\epsilon$ is orthogonal to $\phi_Q^{\epsilon'}$ for $\epsilon\neq \epsilon'$. Hence,  if for every $Q$ we pick $\epsilon_Q$,  as above, we have that $\{\phi_{Q}^{\epsilon_Q}\}_{Q\in\dd}$ is a generalized Haar system in $\R^d$, see Definition \ref{def:GHS}. Note that Remark \ref{remarks:def-GHS} says each $\Phi_j$ is a $2$-value generalized Haar system in $\R$. However, unless some further condition is imposed in each measure $\mu_j$, one has that $\phi_Q^\epsilon$ may take more than 2 non-vanishing values (this is quite easy if we take $\epsilon=\{1\}^d$). Nevertheless, if $Q\in\dd_\Phi$ then
$$
\|\phi_Q^\epsilon\|_{L^1(\mu)}
=
\prod_{j=1}^d \|\phi_{j, I_j^Q}^{\epsilon_j}\|_{L^1(\mu_j)},
\qquad
\|\phi_Q^\epsilon\|_{L^\infty(\mu)}
=
\prod_{j=1}^d \|\phi_{j, I_j^Q}^{\epsilon_j}\|_{L^\infty(\mu_j)}.
$$
Let $m_j(I)=\mu_j(I_-)\mu_j(I_+)/\mu_j(I)$ for $I\in\dd_{\Phi_j}$. Then we have that, for every $I\in\dd_{\Phi_j}$,
$$
\|\phi_{j, I}^{0}\|_{L^1(\mu_j)}=\sqrt{\mu_j(I)},\qquad \quad
\|\phi_{j, I}^{0}\|_{L^\infty(\mu_j)}=\frac1{\sqrt{\mu_j(I)}},
$$
and, as in \ref{hI:1-infty},
$$
\|\phi_{j, I}^{1}\|_{L^1(\mu_j)}=2\sqrt{m_j(I)},\qquad\quad
\|\phi_{j, I}^{1}\|_{L^\infty(\mu_j)}\approx \frac1{\sqrt{m_j(I)}}.
$$
Thus, despite the fact that $\Phi$ is not a $2$-value generalized Haar system in general, we obtain that $\Phi$ is standard.

To conclude this section we observe that although the generalized Haar systems we have constructed above are all standard, this is not the case in general. We work in $\R^2$ and for $k\ge 2$ we let
$Q_k=[k,k+1)\times [k,k+1)$. Fix an enumeration $Q_k^1, Q_k^2, Q_k^3, Q_k^4$ of the dyadic children of $Q_k$. Define $F(x)\equiv 1$ if $x\notin \cup_{k\ge 2}Q_k$ and elsewhere
$$
F(x)=
\sum_{k=2}^\infty \Big(
\frac4{k^2}\,\big(1_{Q_k^1}(x)+1_{Q_k^2}(x)\big)
+
\frac{2(k^2-2)}{k^2}\,\big(1_{Q_k^3}(x)+\,1_{Q_k^4}(x)\big)\Big).
$$
We consider $d\mu(x)=F(x)\,dx$ which is a Borel measure such that $0<\mu(Q)<\infty$ for every $Q\in\dd$. By construction we have
$$
\mu(Q_k^1)=\mu(Q_k^2)=\frac1{k^2},
\qquad
\mu(Q_k^3)=\mu(Q_k^4)=\frac{k^2-2}{2k^2},
\qquad
\mu(Q_k)=1.
$$
Next we consider the system $\Phi=\{\phi_{Q_k}\}_{k\ge 2}$ with
\begin{multline*}
\phi_{Q_k}
=
\frac1{2k}\,\Big(\frac{1_{Q_k^1}}{\mu(Q_k^1)}-\frac{1_{Q_k^2}}{\mu(Q_k^2)}\Big)
+
\sqrt{\frac{k^2-2}{8\,k^2}}
\,\Big(\frac{1_{Q_k^3}}{\mu(Q_k^3)}-\frac{1_{Q_k^4}}{\mu(Q_k^4)}\Big)
\\
=
\frac{k}{2}\,\big(1_{Q_k^1}-1_{Q_k^2}\big)
+
\sqrt{\frac{k^2}{2\,(k^2-2)}}
\,\big(1_{Q_k^3}-1_{Q_k^4}\big).
\end{multline*}
By construction each $\phi_{Q_k}$ satisfies $(a)$--$(d)$ in Definition \ref{def:GHS} where we observe that in $(d)$ we have $\|\phi_{Q_k}\|_{L^2(\mu)}=1$. Thus, $\Phi$ is a generalized Haar system in $\R^2$. On the other hand,
\begin{multline*}
\|\phi_{Q_k}\|_{L^1(\mu)}\|\phi_{Q_k}\|_{L^\infty(\mu)}
=
\Bigg(\frac1k+\sqrt{\frac{k^2-2}{2\,k^2}}\Bigg)
\max
\Bigg\{
\frac{k}{2},\sqrt{\frac{k^2}{2\,(k^2-2)}}\,
\Bigg\}
\\
\ge
\sqrt{\frac{k^2-2}{2\,k^2}}\,\frac{k}{2}
=
\frac{\sqrt{k^2-2}}{2\,\sqrt{2}}\underset{k \to \infty}{\longrightarrow} \infty.
\end{multline*}
Therefore, $\Phi$ is not standard. We note that in view of Example \ref{ex:Haar-mult} we have that the Haar multiplier
\begin{equation}\label{HM-non-stan}
T_\epsilon f
=
\sum_{Q\in\dd} \epsilon_Q\,\langle f,\phi_{Q}\rangle\,\phi_{Q},
\qquad
\epsilon_Q=\pm 1
\end{equation}
is not of weak-type $(1,1)$. We can obtain this from Theorem \ref{thm:HS-general}. However, here the situation is very simple: we just take $\varphi_{Q_k}=1_{Q_k^1}/\mu(Q_k^1)$ and obtain that
$$
T_\epsilon \varphi_{Q_k}
=
\epsilon_{Q_k}\,\langle \varphi_{Q_k},\phi_{Q_k}\rangle\,\phi_{Q_k}
=
\epsilon_{Q_k}\,\frac{k}2\,\phi_{Q_k}.
$$
Thus, by \eqref{L1-weakL1},
$$
\frac{\|T_\epsilon \varphi_{Q_k}\|_{L^{1,\infty}(\mu)}}{\|\varphi_{Q_k}\|_{L^1(\mu)}}
\approx
\frac{k\,\|\phi_{Q_k}\|_{L^{1}(\mu)}}{2}
=
\frac{k}{2}\,\Big(\frac1k+\sqrt{\frac{k^2-2}{2\,k^2}}\Big)\underset{k \to \infty}{\longrightarrow} \infty,
$$
and therefore $T_\epsilon$ is not of weak-type $(1,1)$.

Let us finally point out that in the classical situation (i.e., when $\mu$ is the Lebesgue measure and we take a standard Haar system) these operators are usually referred to as a martingale transforms. As it is well known, martingale transforms are of weak-type $(1,1)$ for any measure $\mu$ by the use of  probability methods. Surprisingly, $T_\epsilon$ is not of weak-type $(1,1)$ and therefore $T_\epsilon$ cannot be written as a ``martingale transform'' operator in terms of martingale differences (see \eqref{eqn:martin-transform-hI} below for further details).

\subsection{Examples of measures in higher dimensions}

Taking into account the previous constructions, we are going to give some examples of non trivial measures so that the conditions in Theorem \ref{thm:HS-general} hold. We first notice that if $\mu$ is dyadically doubling then $\mu(Q)\approx \mu(Q')$ for every dyadic children $Q'$ of $Q$. In particular, for any generalized Haar system $\Phi$, one can show that
$\|\Phi_Q\|_{L^1(\mu)}\approx \mu(Q)^{1/2}$ and $\|\Phi_Q\|_{L^\infty(\mu)}\approx \mu(Q)^{-1/2}$ for every $Q\in\mathscr{D}_\Phi$. This clearly implies that we always have that $\Xi(\Phi,\Psi;r,s)\le C_{r,s}$ for any choices of generalized Haar systems.
Thus, the problem becomes interesting when $\mu$ is not dyadically doubling. The general case admits too many choices, and we just want to give an illustration of the kind of issues that one can find. Therefore we are going to restrict ourselves to dimension $d=2$ with $0<\mu(Q)<\infty$ for every $Q\in\dd(\R^2)$ and $\Phi=\Psi$ with $\dd_\Phi=\dd$. We are going to consider the complexities $(1,0)$ and $(0,1)$ (since these are related to the model operators $H_\dd$ and $H_\dd^*$ in $1$-dimension).

We consider Wilson's construction. We halve each $Q$ horizontally and write $Q_N$ for the northern ``hemisphere'' and $Q_S$ the southern ``hemisphere''. If for every cube $Q$ we take the anti-clockwise enumeration starting with the west-south corner then $Q_S=E_Q^{\{1,2\}}$ and $Q_N=E_Q^{\{3,4\}}$. We now take Wilson's system $\Phi=\{h_Q^{\{1,2,3,4\}}\}_{Q\in\dd}$, that is,
$$
h_Q^{\{1,2,3,4\}}
=
\sqrt{m_{N,S}(Q)}\,\Big(\frac{1_{Q_S}}{\mu(Q_S)}-\frac{1_{Q_N}}{\mu(Q_N)}\Big),
\qquad
m_{N,S}(Q)=\frac{\mu(Q_S)\,\mu(Q_N)}{\mu(Q)}.
$$

Suppose that $d\mu(x,y)=dx\,d\nu(y)$ then $\mu$ is dyadically doubling iff $\nu$ is dyadically doubling.
If $Q=I\times J$ then
$$
m_{N,S}(Q)
=
|I|\,m_{\nu}(J)
=
|I|\,\frac{\nu(J_-)\,\nu(J_+)}{\nu(J)}.
$$
Then $\Xi(\Phi,\Phi;0,1)<\infty$ if and only if $\nu$ is $m_\nu$-increasing and $\Xi(\Phi,\Phi;1,0)<\infty$ if and only if $\nu$ is $m_\nu$-decreasing. Using the examples we constructed above we find measures $\mu$ in $\R^2$ which are non-dyadically doubling but they satisfy one (or both) conditions.

However if we use another Haar system we get a different behavior. Suppose now that our enumeration is clockwise  and starts with the west-south corner then $Q_W=E_Q^{\{1,2\}}$ and $Q_E=E_Q^{\{3,4\}}$ are respectively the western and eastern ``hemispheres''. If now take Wilson's system $\Phi=\{h_Q^{\{1,2,3,4\}}\}$ then we get the same definitions as before replacing $Q_S$ by $Q_W$ and $Q_N$ by $Q_E$. In particular,
$$
m_{E,W}(Q)
=
\frac{|I|}{4}\,\nu(J)
$$
Then we always have $\Xi(\Phi,\Phi;0,1)\le 1/\sqrt{2}<\infty$, whereas $\Xi(\Phi,\Phi;1,0)<\infty$ if and only if $\nu$ is dyadically doubling.

Similar examples can be constructed using Mitrea's Haar shifts.

We finally look at the Haar system using the wavelet construction. If our system is comprised of $\phi_Q^{i,1}(x,y)=\phi_{1,I}^i(x)\phi_{2,J}^1(y)$ with $i=0$ or $1$ we obtain
$$
\|\phi_{Q}^{i,1}\|_{L^1(dx\times d\nu)}
=
2\,\sqrt{|I|\,m_\nu(J)},
\qquad
\|\phi_{Q}^{i,1}\|_{L^\infty(dx\times d\nu)}
\approx
\frac1{\sqrt{|I|\,m_\nu(J)}},
$$
and then we have the same behavior as before: $\Xi(\Phi,\Phi;0,1)<\infty$ if and only if $\nu$ is $m_\nu$-increasing and $\Xi(\Phi,\Phi;1,0)<\infty$ if and only if $\nu$ is $m_\nu$-decreasing.
On the other hand, if we take $\phi_Q^{1,0}(x,y)=\phi_{1,I}^1(x)\phi_{2,J}^0(y)$ and obtain
$$
\|\phi_{Q}^{1,0}\|_{L^1(dx\times d\nu)}
=
\sqrt{|I|\,\nu(J)},
\qquad
\|\phi_{Q}^{1,0}\|_{L^\infty(dx\times d\nu)}
=
\frac1{\sqrt{|I|\,\nu(J)}}.
$$
Then we always have $\Xi(\Phi,\Phi;0,1)\le 1/\sqrt{2}<\infty$, whereas $\Xi(\Phi,\Phi;1,0)<\infty$ if and only if $\nu$ is dyadically doubling.

\section{Further Results}\label{section:further}

\subsection{Non-cancellative Haar shift operators}\label{section:NC-HS}
One can consider Haar shift operators defined in terms of generalized Haar systems that are not required to satisfy the vanishing integral condition. To elaborate on this, let us first consider the case of the dyadic paraproducts and their adjoints.  The space $\BMO_\dd(\mu)$ is the space of locally integrable functions $\rho$ such that
$$
\|\rho\|_{\BMO_\dd(\mu)} = \sup_{Q \in \dd} \Big(\frac1{\mu(Q)}\int_Q \big|\rho(x) - \langle \rho \rangle_Q \big|^2 \,d\mu(x)\Big)^{\frac12} < \infty,
$$
where as usual the terms where $\mu(Q)=0$ are assumed to be 0.
Given $\rho\in\BMO_\dd(\mu)$, and $\Theta=\{\theta_Q\}_{Q\in\dd}$, $\Psi=\{\psi_Q\}_{Q\in\dd}$, two (cancellative) generalized Haar systems, we define the \emph{dyadic paraproduct} $\Pi_\rho$:
$$
\Pi_\rho f(x)
=
\sum_{Q \in \dd}\langle \rho,\theta_Q \rangle \langle f \rangle_Q \psi_{Q}(x).
$$
Note that for each cube $Q$,  $\theta_Q$ and $\psi_Q$ are cancellative generalized Haar functions. However, the term $\langle f \rangle_Q$ can be viewed, after renormalization,  as $f$ paired with the non-cancellative generalized Haar function $1_Q/\mu(Q)^{1/2}$. That is the reason why we call this operator a non-cancellative Haar shift, see below for further details.

Alternatively, one can consider dyadic paraproducts by incorporating $\mu$-Carleson sequences. Given a sequence $\gamma=\{\gamma_Q\}_{Q\in\dd}$, we say that $\gamma$ is a $\mu$-Carleson sequence, which is denoted by $\gamma\in\car$, if for every $Q\in\dd$ we have that $\gamma_Q=0$ if $\mu(Q)=0$ and
$$
\|\gamma\|_{\car}
=
\sup_{Q\in\dd,\, \mu(Q)>0} \left(\frac{\sum_{Q'\in\dd(Q)}|\gamma_{Q'}|^2}{\mu(Q)}\right)^{\frac12}<\infty.
$$
Typical examples of $\mu$-Carleson sequences are given by $\BMO(\mu)$ functions. Indeed if $\rho\in\BMO_\dd(\mu)$, $\Theta=\{\theta_Q\}_{Q\in\dd}$ is a generalized Haar system and we set $\gamma_Q=\langle\rho,\theta_Q \rangle$ we have that $\gamma$ is $\mu$-Carleson measure: if $Q_0\in\dd$ such that $\mu(Q_0)>0$, we have by orthogonality
\begin{multline*}
\sum_{Q\in\dd(Q_0)}|\gamma_Q|^2
=
\sum_{Q \in \dd(Q_0)} \big|\big\langle (\rho-\langle \rho\rangle_{Q_0})1_{Q_0},\theta_Q \big\rangle\big|^2
\\
\le
\|(\rho-\langle \rho\rangle_{Q_0})1_{Q_0}\|_{L^2(\mu)}^2
\le
\|\rho\|_{\BMO(\mu)}^2\,
\mu(Q_0)
\end{multline*}
and therefore $\|\gamma\|_{\car}\le \|\rho\|_{\BMO(\mu)}$. One can also reverse this procedure. Indeed, given $\gamma\in\car$ and a generalized Haar system $\Theta=\{\theta_Q\}_{Q\in\dd}$  we can define a function $\rho$ which is a Haar expansion using $\Theta$ with the coefficients  given by the sequence $\gamma$ as follows. It suffices to consider the function $\rho$ in any $d$-dimensional quadrant, say for simplicity that we are in $\R_1^d=[0,\infty)^d$. Let $Q_k=[0,2^{-k})^d$ and set
$$
\rho(x)=\sum_{k\in\Z}\Big(\sum_{Q\in\dd(Q_k)\setminus\dd(Q_{k+1})}\gamma_Q\,\theta_Q(x)\Big)1_{Q_k\setminus Q_{k+1}}(x).
$$
Note that for every $x\in\R^d_1$, the sum in $k$ contains only one non-vanishing term. From orthogonality and the Carleson condition it follows that for every $k_0\in\Z$,
\begin{multline}\label{eqn:norm-L2-rho}
\|\rho\|_{L^2(Q_{k_0})}^2
\le
\sum_{k\ge k_0}
\sum_{Q\in\dd(Q_k)\setminus\dd(Q_{k+1})}|\gamma_Q|^2
\\
=
\sum_{Q\in\dd(Q_{k_0})} |\gamma_Q|^2
\le
\|\gamma\|_{\car}^2\,
\mu(Q_{k_0}).
\end{multline}
In particular $\rho$ is locally integrable. We next take an arbitrary $R\in\dd$, $R\subset\R_1^d$. Assume first that $R=Q_{k_0}$ for some $k_0\in\Z$. Then easy calculations and \eqref{eqn:norm-L2-rho} lead to
\begin{multline*}
\frac1{\mu(R)}\int_R \big|\rho(x) - \langle \rho \rangle_R \big|^2 \,d\mu(x)
=
\langle |\rho|^2 \rangle_{Q_{k_0}}-\big|\langle \rho \rangle_{Q_{k_0}}|^2
\\
\le
\langle |\rho|^2 \rangle_{Q_{k_0}}
=
\mu(Q_{k_0})^{-1}\,\|\rho\|_{L^2(Q_{k_0})}^2
\le
\|\gamma\|_{\car}^2.
\end{multline*}
On the other hand if $R\notin \{Q_k\}_k$, then there exists a unique $k$ such that $R\subset Q_k\setminus Q_{k+1}$. Then for every $x\in R$ we have
\begin{multline*}
\rho(x)
=
\sum_{Q\in\dd(Q_k)\setminus\dd(Q_{k+1})}\gamma_Q\,\theta_Q(x)
\\
=
\sum_{Q\in\dd(R)}\gamma_Q\,\theta_Q(x)+\sum_{\begin{subarray}{c}
Q\in\dd(Q_k)\setminus\dd(Q_{k+1})
\\
R\subsetneq Q
\end{subarray}}\gamma_Q\,\theta_Q(x)
=I(x)+II.
\end{multline*}
Note that $II$ is constant and that $\int_R I(x)\,d\mu(x)=0$ then
\begin{multline*}
\frac1{\mu(R)} \int_{R} |\rho(x)-\langle\rho\rangle_R|^2\,d\mu(x)
=
\frac1{\mu(R)}\int_{R} \Big|\sum_{Q\in\dd(R)}\gamma_Q\,\theta_Q(x)\Big|^2\,d\mu(x)
\\
\le
\frac1{\mu(R)}\sum_{Q\in\dd(R)}|\gamma_Q|^2
\le
\|\gamma\|_{\car}^2.
\end{multline*}
Gathering the two cases it follows that $\rho\in \BMO(\mu)$ with $\|\rho\|_{\BMO(\mu)}\le \|\gamma\|_{\car}$. Further details are left to the reader.

Given $\gamma$ a $\mu$-Carleson sequence and $\Psi=\{\psi_Q\}_{Q\in\dd}$ a generalized Haar system we define the \emph{dyadic paraproduct} $\Pi_\gamma$ as follows
$$
\Pi_\gamma f(x)
=
\sum_{Q \in \dd}\gamma_Q\, \langle f \rangle_Q \psi_{Q}(x).
$$
If we set $\widetilde{\phi}_Q=1_{Q}/\mu(Q)^{1/2}$ if $\mu(Q)>0$ and $\widetilde{\phi}_Q\equiv 0$ otherwise we have that $\widetilde{\Phi}=\{\widetilde{\phi}_Q\}_{Q\in\dd}$ satisfies $(a)$, $(b)$ and $(d)$ in Definition \ref{def:GHS}. Since $(c)$ does not hold we call $\widetilde{\Phi}$ a non-cancellative generalized Haar system. In such a way we can write
$$
\Pi_\gamma f(x)
=
\sum_{Q \in \dd}\alpha_Q\,\langle f,\widetilde{\phi}_Q \rangle \psi_{Q}(x),
\qquad
\alpha_Q=\frac{\gamma_Q}{\mu(Q)^{\frac12}}.
$$
Note that
$$
|\alpha_Q|
\le
\left(\frac{\sum_{Q'\in\dd(Q)}|\gamma_{Q'}|^2}{\mu(Q)}\right)^{\frac12}
\le
\|\gamma\|_{\car}.
$$
Thus, we can see $\Pi_\gamma$ as a Haar shift of complexity $(0,0)$ with respect to the non-cancellative generalized Haar system $\widetilde{\Phi}$ and
the (cancellative) generalized Haar system $\Psi$. Notice that the adjoint of the paraproduct can be written as
$$
\Pi_\gamma^* f(x)
=
\sum_{Q \in \dd} \gamma_Q\,\langle f, \psi_{Q}\rangle\,\frac{1_Q(x)}{\mu(Q)}
=
\sum_{Q \in \dd}\alpha_Q\,\langle f,\psi_Q\rangle \widetilde{\phi}_Q (x).
$$
Again $\Pi_\gamma^*$ is a Haar shift of complexity $(0,0)$ with respect to a (cancellative) generalized Haar system $\Psi$ and the non-cancellative generalized Haar system $\widetilde{\Phi}$. This motivates the definition of a non-cancellative Haar shift operator:
\begin{equation}\label{non-cancel-HS}
\widetilde{\Sha}_{r,s} f(x)
=
\sum_{Q \in \dd } \sum_{\begin{subarray}{c} R \in \dd _r(Q) \\ S \in \dd _s(Q) \end{subarray}} \alpha_{R,S}^Q \langle f, \widetilde{\phi}_R \rangle \widetilde{\psi}_S(x), \qquad \sup_{Q,R,S}|\alpha_{R,S}^Q|<\infty,
\end{equation}
with $\widetilde{\Phi}=\{\widetilde{\phi}_Q\}_{Q\in\dd}$ and $\widetilde{\Psi}=\{\widetilde{\psi}_Q\}_{Q\in\dd}$  being two non-cancellative generalized Haar systems, i.e., both of them satisfies  $(a)$, $(b)$ and $(d)$ in Definition \ref{def:GHS}. We would like to stress that $\widetilde{\Phi}$ and $\widetilde{\Psi}$ do not necessarily satisfy $(c)$, therefore the $L^2(\mu)$ boundedness does not automatically follow from the assumed conditions. Thus, is natural to impose that $\widetilde{\Sha}_{r,s}$ is bounded on $L^2(\mu)$ along with some local boundedness property and these condition will be checked in any specific situation.

\begin{theorem}\label{thm:HS-general:non-cancel}
Let $\mu$ be a Borel measure on $\R^d$, $d\ge 1$, satisfying that $\mu(Q)<\infty$ for every $Q\in\dd $.
Let $\widetilde{\Phi}=\{\widetilde{\phi}_Q\}_{Q\in\dd}$ and $\widetilde{\Psi}=\{\widetilde{\psi}_Q\}_{Q\in\dd}$ be two non-cancellative generalized Haar systems in $\R^d$. Let $r$, $s$ be two non-negative integers
and consider $\widetilde{\Sha}_{r,s}$ as in \eqref{non-cancel-HS}. Assume that $\widetilde{\Sha}_{r,s}$ is bounded on $L^2(\mu)$ and also that $\widetilde{\Sha}_{r,s}$ satisfies the following restricted local $L^2(\mu)$ boundedness: for every $Q_0\in\dd$ we have that
\begin{equation}\label{non-cancel-HS-local-L2}
\|\widetilde{\Sha}_{r,s}^{Q_0}(1_{Q_0})\|_{L^2(\mu)}
\lesssim
\mu(Q_0)^{\frac12},
\end{equation}
where the constant is uniform on $Q_0$ and
$$
\widetilde{\Sha}_{r,s}^{Q_0} f(x)
=
\sum_{Q \in \dd(Q_0) } \sum_{\begin{subarray}{c} R \in \dd _r(Q) \\ S \in \dd _s(Q) \end{subarray}} \alpha_{R,S}^Q \langle f, \widetilde{\phi}_R \rangle \widetilde{\psi}_S(x).
$$
If $\Xi(\widetilde{\Phi},\widetilde{\Psi}; r,s)<\infty$, then $\widetilde{\Sha}_{r,s}$ maps continuously $L^1(\mu)$ into $L^{1,\infty}(\mu)$.
\end{theorem}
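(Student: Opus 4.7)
My plan is to follow the proof of Theorem \ref{thm:HS-general}: fix $\lambda>0$, apply the Calder\'on--Zygmund decomposition $f=g+b+\beta$ of Theorem \ref{theor:CZ-d}, and split $\mu\{|\widetilde{\Sha}_{r,s}f|>\lambda\}\le S_1+S_2+S_3+S_4$ as in that argument. The piece $S_1+S_2$ is absorbed into $\|f\|_{L^1(\mu)}/\lambda$ by combining Theorem \ref{theor:CZ-d}(a) (with $p=2$) with the assumed $L^2(\mu)$ boundedness of $\widetilde{\Sha}_{r,s}$. For $S_3$, the calculation of \eqref{eqn:Sha-bj} remains valid in the non-cancellative setting: although $\widetilde{\phi}_R$ has no mean zero, the vanishing integral of $b_j$ still forces $\langle b_j,\widetilde{\phi}_R\rangle=0$ whenever $R\supsetneq Q_j$ (because $\widetilde{\phi}_R$ is constant on the child of $R$ containing $Q_j$), while the restriction to $\R^d\setminus\Omega_\lambda$ kills every term with $Q\subseteq Q_j$. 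Hence the bound via $\Xi(\widetilde{\Phi},\widetilde{\Psi};r,s)$ goes through unchanged and yields $S_3\lesssim\Xi\|f\|_{L^1(\mu)}/\lambda$.

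The genuinely new case is $S_4$, where the sharp reduction \eqref{eqn:Sha-betaj} to a single $R=\widehat{Q}_j$ relied on $\int\phi_R=0$ and fails here. I would split $\widetilde{\Sha}_{r,s}\beta_j=A_j+B_j$, with $A_j:=\widetilde{\Sha}_{r,s}^{\widehat{Q}_j}\beta_j$ collecting the terms with $Q\in\dd(\widehat{Q}_j)$ (so $A_j$ is supported in $\widehat{Q}_j$) and $B_j$ the remaining terms, which all satisfy $Q\supsetneq\widehat{Q}_j$. For $B_j$, only $\widehat{Q}_j\subsetneq Q\subseteq\widehat{Q}_j^{(r)}$ contribute, the corresponding $R$ lies in $\widehat{Q}_j$, and the estimate $|\langle\beta_j,\widetilde{\phi}_R\rangle|\le\|\beta_j\|_{L^1(\mu)}\|\widetilde{\phi}_R\|_{L^\infty(\mu)}$ combined with $\Xi(\widetilde{\Phi},\widetilde{\Psi};r,s)$ and $\sum_j\|\beta_j\|_{L^1(\mu)}\le 4\|f\|_{L^1(\mu)}$ reproduces the cancellative estimate.

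To handle $\sum_j A_j$ I will invoke the restricted local hypothesis \eqref{non-cancel-HS-local-L2}. Using the explicit expression $\beta_j=c_j(1_{Q_j}-p_j 1_{\widehat{Q}_j})$ with $c_j=\langle f\rangle_{Q_j}-\langle f\rangle_{\widehat{Q}_j}$ and $p_j=\mu(Q_j)/\mu(\widehat{Q}_j)$, plus the maximality consequence $|c_j|\mu(Q_j)\le 2\int_{Q_j}|f|\,d\mu$, I would write
$$A_j=c_j\,\widetilde{\Sha}_{r,s}^{Q_j}(1_{Q_j})+c_j\,\mathrm{Rem}_j-c_j p_j\,\widetilde{\Sha}_{r,s}^{\widehat{Q}_j}(1_{\widehat{Q}_j}),$$
where $\mathrm{Rem}_j:=\widetilde{\Sha}_{r,s}^{\widehat{Q}_j}(1_{Q_j})-\widetilde{\Sha}_{r,s}^{Q_j}(1_{Q_j})$. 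The first and third summands are bounded in $L^2(\mu)$ by \eqref{non-cancel-HS-local-L2} applied at $Q_0=Q_j$ and $Q_0=\widehat{Q}_j$ respectively; Cauchy--Schwarz against the supports $Q_j$, $\widehat{Q}_j$ upgrades these to $L^1$-bounds of order $|c_j|\mu(Q_j)$, the key arithmetic being $p_j\mu(\widehat{Q}_j)=\mu(Q_j)$. The remainder $\mathrm{Rem}_j$ receives contributions only from $Q=\widehat{Q}_j$ (any $Q\subsetneq\widehat{Q}_j$ with $Q\not\subseteq Q_j$ forces $R\cap Q_j=\emptyset$), so $R\in\dd_r(\widehat{Q}_j)$; the estimate $|\langle 1_{Q_j},\widetilde{\phi}_R\rangle|\le\mu(R\cap Q_j)\|\widetilde{\phi}_R\|_{L^\infty(\mu)}$, the identity $\sum_{R\in\dd_r(\widehat{Q}_j)}\mu(R\cap Q_j)=\mu(Q_j)$, and $\Xi(\widetilde{\Phi},\widetilde{\Psi};r,s)$ then give $\|\mathrm{Rem}_j\|_{L^1(\mu)}\lesssim\Xi\mu(Q_j)$. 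Summing in $j$ and applying Chebyshev closes $S_4$. The main obstacle is precisely identifying this three-term split of $A_j$ so that each summand is controlled by exactly one of the two available hypotheses; once this decomposition is in hand, the rest is the same machinery as in Theorem \ref{thm:HS-general}.
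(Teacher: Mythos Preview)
Your argument is correct and follows the paper's overall architecture (same Calder\'on--Zygmund decomposition, identical treatment of $S_1$, $S_2$, $S_3$, and the same $\Xi$-based control of the ``outer'' contributions in $S_4$). The one place where you diverge from the paper is in how you dispose of the ``inner'' part of $\widetilde{\Sha}_{r,s}\beta_j$.

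The paper does not bound $\|A_j\|_{L^1(\mu)}$ directly. Instead it first subtracts another copy of $\mu(\Omega_\lambda)$ and integrates $|\widetilde{\Sha}_{r,s}\beta_j|$ only over $\widehat{Q}_j\setminus Q_j$; on that region every term with $Q\subseteq Q_j$ vanishes (because $\widetilde{\psi}_S$ is supported in $Q$), and for the surviving $Q\subsetneq\widehat{Q}_j$ one has $R\subset\widehat{Q}_j\setminus Q_j$, where $\beta_j$ is the constant $-c_jp_j$. Grouping by siblings $Q'\in\dd_1(\widehat{Q}_j)\setminus\{Q_j\}$ then yields exactly $|c_j|p_j\,|\widetilde{\Sha}_{r,s}^{Q'}(1_{Q'})|$, and \eqref{non-cancel-HS-local-L2} is invoked at each sibling $Q'$. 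Your three-term split of $A_j$ avoids the extra restriction to $\R^d\setminus\Omega_\lambda$ at the cost of invoking \eqref{non-cancel-HS-local-L2} at both scales $Q_j$ and $\widehat{Q}_j$, and of isolating the single-generation remainder $\mathrm{Rem}_j$ (which you correctly reduce to $Q=\widehat{Q}_j$ and bound via $\Xi$). Both routes give the same final constants up to harmless dimensional factors; yours is arguably a bit more direct, while the paper's makes slightly more economical use of the local hypothesis.
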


\begin{remark}\label{remark:truncation}
Let us observe that $\widetilde{\Sha}_{r,s}^{Q_0}$ is the non-cancellative Haar shift operator associated with the sequence $\gamma^Q_{R,S}=\alpha^Q_{R,S}$ for $Q\in\dd(Q_0)$, $R \in \dd _r(Q)$,  $S \in \dd _s(Q)$; and $\gamma^Q_{R,S}=0$ otherwise. Also,  the $L^2(\mu)$ boundedness of $\widetilde{\Sha}_{r,s}^{Q_0}$  clearly implies \eqref{non-cancel-HS-local-L2}.
\end{remark}

\begin{remark}
Notice that if we further assume that both Haar systems $\widetilde{\Phi}$ and $\widetilde{\Psi}$ are cancellative, then we automatically obtain \eqref{non-cancel-HS-local-L2} and the $L^2(\mu)$ boundedness of $\widetilde{\Sha}_{r,s}^{Q_0}$ (see Section \ref{section:HS-general}). In such a case Theorem \ref{thm:HS-general:non-cancel} becomes Theorem \ref{thm:HS-general}
\end{remark}

\begin{proof}
The proof is similar to that of Theorem \ref{thm:HS-general}, therefore we only give the parts of the argument that are different. Again we may assume that $\mu\in\mathcal{M}_\infty$, the general case follows as before. Follow the proof of Theorem \ref{thm:HS-general}. For $S_1$ we use our assumption that $\widetilde{\Sha}_{r,s}$ is bounded on $L^2(\mu)$. The estimate for $S_2$ is the same. Let us observe that the estimate for $S_3$ is entirely analogous since in \eqref{eqn:Sha-bj} we have not used the vanishing integral of $\phi_Q$. We are then left with estimating $S_4$, for which we first observe that
\begin{multline*}
S_4
\le
\mu(\Omega_\lambda)+\mu\{ x\in \R^d\setminus\Omega_\lambda : |\widetilde{\Sha}_{r,s}  \beta(x)| > \lambda/3\}
\\
\le
\frac1{\lambda}\,\|f\|_{L^1(\mu)}
+
\frac{3}{\lambda}\sum_j\Big( \int_{\R^d\setminus \widehat{Q}_j} |\widetilde{\Sha}_{r,s}  \beta_j| d\mu + \int_{\widehat{Q}_j\setminus Q_j} |\widetilde{\Sha}_{r,s}  \beta_j| d\mu\Big)
\end{multline*}
and we estimate each term in the interior sum. Proceeding as in \eqref{eqn:Sha-bj} and using Theorem \ref{theor:CZ-d}  we can analogously obtain
\begin{multline*}
\sum_j\int_{\R^d\setminus  \widehat{Q}_j} |\widetilde{\Sha}_{r,s}\beta_j|\,d\mu
\\
\lesssim
\sum_j
\sum_{\widehat{Q}_j\subsetneq Q\subset Q_j^{(r+1)}}\sum_{\begin{subarray}{c} R \in \dd _r(Q), R\subset \widehat{Q}_j \\ S \in \dd _s(Q) \end{subarray}} \|\beta_j\|_{L^1(\mu)}\,\|\widetilde{\phi}_R\|_{L^{\infty}(\mu)}\|\widetilde{\psi}_S\|_{L^1(\mu)}
\\
\le
2^{2+(r+s)\,d}\,r\, \Xi(\widetilde{\Phi},\widetilde{\Psi}; r,s)\|f\|_{L^1(\mu)}.
\end{multline*}
On the other hand, for every $x\in \widehat{Q}_j\setminus Q_j$ we have
$$
|\widetilde{\Sha}_{r,s}\beta_j(x)|
\le
\bigg|\sum_{Q\subsetneq\widehat{Q}_j}\sum_{\begin{subarray}{c} R \in \dd _r(Q)\\ S \in \dd _s(Q) \end{subarray}} \dots \bigg|
+
\bigg|\sum_{\widehat{Q}_j\subset Q}\sum_{\begin{subarray}{c} R \in \dd _r(Q)\\ S \in \dd _s(Q) \end{subarray}} \dots \bigg|
=
F_j(x)
+
G_j(x)
$$
and we estimate each function in turn. For $F_j(x)$ we note that the terms $Q\subset Q_j$ vanish and therefore $R\subset Q\subset \widehat{Q}_j\setminus Q_j$. Thus $\beta_j$ is constant on $R$ and then
\begin{align*}
F_j(x)
&=
\bigg|
\sum_{Q\subsetneq \widehat{Q}_j\setminus Q_j}\sum_{\begin{subarray}{c} R \in \dd _r(Q)\\ S \in \dd _s(Q) \end{subarray}} \alpha_{R,S}^Q\langle \beta_j, \widetilde{\phi}_R\rangle\, \widetilde{\psi}_S(x)\bigg|
\\
&=
\big|\langle f \rangle_{Q_j} - \langle f \rangle_{\widehat{Q}_j}\bigl| \,\frac{\mu(Q_j)}{\mu(\widehat{Q}_j)}\,
\bigg|
\sum_{Q\subsetneq \widehat{Q}_j\setminus Q_j}\sum_{\begin{subarray}{c} R \in \dd _r(Q)\\ S \in \dd _s(Q) \end{subarray}} \alpha_{R,S}^Q\langle 1_{\widehat{Q}_j\setminus Q_j}, \widetilde{\phi}_R\rangle\, \widetilde{\psi}_S(x)\bigg|
\\
&\le
2\,\langle |f| \rangle_{Q_j}\,
\frac{\mu(Q_j)}{\mu(\widehat{Q}_j)}\,
\sum_{\begin{subarray}{c}Q'\in\dd_1(\widehat{Q}_j) \\ Q'\neq Q_j \end{subarray}}
\bigg|
\sum_{Q\in\dd(Q')}\sum_{\begin{subarray}{c} R \in \dd _r(Q)\\ S \in \dd _s(Q) \end{subarray}} \alpha_{R,S}^Q\langle 1_{Q'}, \widetilde{\phi}_R\rangle\, \widetilde{\psi}_S(x)\bigg|
\\
&=
2\,\langle |f| \rangle_{Q_j}\,
\frac{\mu(Q_j)}{\mu(\widehat{Q}_j)}\,
\sum_{\begin{subarray}{c}Q'\in\dd_1(\widehat{Q}_j) \\ Q'\neq Q_j \end{subarray}} \big|\widetilde{\Sha}_{r,s}^{Q'}(1_{Q'})(x)\big|.
\end{align*}
This, the fact that $\supp \widetilde{\Sha}_{r,s}^{Q'}(1_{Q'})\subset Q'$ and  that these cubes and pairwise disjoint, and  \eqref{non-cancel-HS-local-L2} yield
\begin{multline*}
\int_{\widehat{Q}_j\setminus Q_j} F_j\,d\mu
\le
2\,\langle |f| \rangle_{Q_j}\,
\bigg(\frac{\mu(Q_j)}{\mu(\widehat{Q}_j)}\bigg)\,
\sum_{\begin{subarray}{c}Q'\in\dd_1(\widehat{Q}_j) \\ Q'\neq Q_j \end{subarray}} \int_{Q'} \big|\widetilde{\Sha}_{r,s}^{Q'}(1_{Q'})\big|d\mu
\\
\le
2\,\langle |f| \rangle_{Q_j}\,
\bigg(\frac{\mu(Q_j)}{\mu(\widehat{Q}_j)}\bigg)\,
\sum_{\begin{subarray}{c}Q'\in\dd_1(\widehat{Q}_j) \\ Q'\neq Q_j \end{subarray}} \big\|\widetilde{\Sha}_{r,s}^{Q'}(1_{Q'})\big\|_{L^2(\mu)}\,\mu(Q')^{\frac{1}{2}}
\lesssim
\int_{Q_j} |f|\,d\mu.
\end{multline*}
For $G_j$ we proceed as before
\begin{multline*}
\int_{\widehat{Q}_j\setminus Q_j} G_j\,d\mu
\lesssim
\sum_{\widehat{Q}_j\subset Q\subset Q_j^{(r+1)}}\sum_{\begin{subarray}{c} R \in \dd _r(Q), R\subset \widehat{Q}_j \\ S \in \dd _s(Q) \end{subarray}} \|\beta_j\|_{L^1(\mu)}\,\|\widetilde{\phi}_R\|_{L^{\infty}(\mu)}\|\widetilde{\psi}_S\|_{L^1(\mu)}
\\
\le
2^{(r+s)\,d}\,r\, \Xi(\widetilde{\Phi},\widetilde{\Psi}; r,s)\|\beta_j\|_{L^1(\mu)}.
\end{multline*}
Gathering the previous estimates we conclude that
\begin{multline*}
\sum_j\int_{\widehat{Q}_j\setminus Q_j} |\widetilde{\Sha}_{r,s}\beta_j|\,d\mu
\le
\sum_j\int_{\widehat{Q}_j\setminus Q_j} (F_j+ G_j)\,d\mu
\\
\lesssim
\sum_j \int_{Q_j} |f|\,d\mu + \sum_j \|\beta_j\|_{L^1(\mu)}
\lesssim
\|f\|_{L^1(\mu)}.
\end{multline*}
\end{proof}

\begin{remark}\label{remark:HS-NC-constant}
As above, if we keep track of the constants and use a standard homogeneity argument we obtain that, under the conditions of Theorem \ref{thm:HS-general},
\begin{multline*}
\|\widetilde{\Sha}_{r,s}\|_{L^1(\mu)\to L^{1,\infty}(\mu)}\!
\le
C_0\,\Big(\|\widetilde{\Sha}_{r,s}\|_{L^2(\mu)\to L^{2}(\mu)}
\\
+ \sup_{Q\in\dd,\mu(Q)\neq 0} \frac{\|\widetilde{\Sha}_{r,s}^{Q}(1_{Q})\big\|_{L^2(\mu)}}{\sqrt{\mu(Q)}}
+
2^{(s+r)\,d}\,r\,\,\Xi(\Phi,\Psi; r,s)\sup_{Q,R,S} |\alpha_{R,S}^Q|\Big),
\end{multline*}
where $C_0$ is a universal constant (independent of the dimension, for instance, in the previous argument one can safely take $C_0\le 220$).
\end{remark}

\subsection{Dyadic paraproducts}

As a consequence  of Theorem \ref{thm:HS-general:non-cancel} we can obtain the following result for dyadic paraproducts.

\begin{theorem}\label{theor:parap}
Let $\mu$ be a Borel measure on $\R^d$, $d\ge 1$, satisfying that $\mu(Q)<\infty$ for every $Q\in\dd $. Let  $\Psi=\{\psi_Q\}_{Q\in\dd}$ be a generalized Haar system. Given a sequence $\gamma=\{\gamma_Q\}_{Q\in\dd}$ we consider the \emph{dyadic paraproduct} $\Pi_\gamma$ and its adjoint $\Pi_\gamma^*$\textup{:}
$$
\Pi_\gamma f(x)
=
\sum_{Q \in \dd} \gamma_Q\,\langle f \rangle_Q \psi_{Q}(x),
\qquad
\Pi_\gamma^* f(x)
=
\sum_{Q \in \dd} \gamma_Q\,\langle f, \psi_{Q}\rangle\,\frac{1_Q(x)}{\mu(Q)}.
$$
Then we have the following\textup{:}
\begin{list}{$(\theenumi)$}{\usecounter{enumi}\leftmargin=1cm
\labelwidth=1cm\itemsep=0.2cm\topsep=.1cm
\renewcommand{\theenumi}{\roman{enumi}}}

\item For every $\gamma\in\car$, $\Pi_\gamma$ is of weak-type $(1,1)$ and there exists a universal constant $C_0$ \textup{(}one can take for instance $C_0\le 288$\textup{)} such that
$$
\|\Pi_\gamma f\|_{L^{1,\infty}(\mu)}
\le
C_0\,\|\gamma\|_{\car}\|f\|_{L^1(\mu)}.
$$
Consequently, $\Pi_\gamma$ is bounded on $L^p(\mu)$, $1<p\le 2$ \textup{(}the constant is dimension free and depends linearly on $\|\gamma\|_{\car}$\textup{)}.

\item If
\begin{equation}\label{comp-adjoint-parap}
\sup_{Q\in\dd} \|\psi_{Q}\|_{L^{\infty}(\mu)}\,\mu(Q)^{\frac12}<\infty,
\end{equation}
then $\Pi_\gamma^*$ is of weak-type $(1,1)$ for every $\gamma\in\car$ with boundedness constant depending linearly on $\|\gamma\|_{\car}$. Conversely,  if $\Pi_\gamma^*$ is of weak-type $(1,1)$ with $\|\Pi_\gamma^*\|_{L^1(\mu)\to L^{1,\infty}} \le C\,\|\gamma\|_{\car}$ for every $\gamma\in \car$, then \eqref{comp-adjoint-parap} holds. Additionally,
if \eqref{comp-adjoint-parap} holds then $\Pi_\gamma^*$ is bounded on $L^p(\mu)$ for $1<p<2$ \textup{(}the case $p\ge 2$ follows from $(i)$ without assuming \eqref{comp-adjoint-parap}\textup{)}.

\item Suppose in particular that $d=1$, $\mu(I)>0$ for every $I\in\dd$ and that $\Psi=\mathcal{H}$. Then, $\Pi_\gamma$ is of weak-type $(1,1)$ and bounded on $L^p(\mu)$, $1<p\le 2$, for every $\gamma\in \car$. However, if for every $\gamma\in \car$ we have that  $\Pi_\gamma^*$ is of weak-type $(1,1)$ or weak-type $(p,p)$ for some $1<p<2$, then $\mu$ is dyadically doubling. Conversely, if $\mu$ is dyadically doubling then $\Pi_\gamma^*$ is of weak-type $(1,1)$ and bounded on $L^p(\mu)$, $1<p<2$, for every $\gamma\in \car$.

\item In $(i)$, $(ii)$, $(iii)$ we can replace the condition ``$\gamma\in\car$'' by  ``$\gamma_Q=\langle\rho,\theta_Q\rangle$ with $\rho\in\BMO(\mu)$ and $\Theta=\{\theta_Q\}_{Q\in\dd}$ a generalized Haar system''; and in the boundedness constants $\|\gamma\|_{\car}$  by $\|\rho\|_{\BMO(\mu)}$.

\end{list}
\end{theorem}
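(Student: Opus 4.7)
\emph{Proof sketch.} The heart of the plan is to recognize both $\Pi_\gamma$ and $\Pi_\gamma^*$ as non-cancellative Haar shifts of complexity $(0,0)$ in the sense of Section \ref{section:NC-HS}: letting $\widetilde{\Phi}=\{1_Q/\mu(Q)^{1/2}\}_{Q\in\dd}$ be the non-cancellative Haar system (with $\widetilde{\phi}_Q\equiv 0$ when $\mu(Q)=0$) and $\alpha_Q=\gamma_Q/\mu(Q)^{1/2}$, one has
$$
\Pi_\gamma f=\sum_{Q\in\dd}\alpha_Q\langle f,\widetilde{\phi}_Q\rangle\psi_Q,\qquad \Pi_\gamma^*f=\sum_{Q\in\dd}\alpha_Q\langle f,\psi_Q\rangle\widetilde{\phi}_Q,
$$
with $\sup_Q|\alpha_Q|\le\|\gamma\|_{\car}$. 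Parts $(i)$ and the forward direction of $(ii)$ will then follow from Theorem \ref{thm:HS-general:non-cancel} combined with Remark \ref{remark:HS-NC-constant} (to keep track of the constant), once the three hypotheses of that theorem are verified in each case.

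For $(i)$ each check is short. The quantity $\Xi(\widetilde{\Phi},\Psi;0,0)=\sup_Q\|\psi_Q\|_{L^1(\mu)}/\mu(Q)^{1/2}$ is bounded by $1$ by H\"older and $\|\psi_Q\|_{L^2(\mu)}=1$; the $L^2(\mu)$ bound $\|\Pi_\gamma f\|_{L^2(\mu)}\lesssim\|\gamma\|_{\car}\|f\|_{L^2(\mu)}$ is the dyadic Carleson embedding, a direct consequence of orthonormality of $\Psi$ and the definition of $\|\gamma\|_{\car}$; and the local bound is cheap since $\langle 1_{Q_0}\rangle_Q=1$ for every $Q\in\dd(Q_0)$, so that $\Pi_\gamma^{Q_0}(1_{Q_0})=\sum_{Q\in\dd(Q_0)}\gamma_Q\psi_Q$, whose squared $L^2(\mu)$-norm is $\sum_{Q\in\dd(Q_0)}|\gamma_Q|^2\le\|\gamma\|_{\car}^2\mu(Q_0)$. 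For the forward direction of $(ii)$ the situation mirrors this: $\Xi(\Psi,\widetilde{\Phi};0,0)=\sup_Q\|\psi_Q\|_{L^\infty(\mu)}\mu(Q)^{1/2}$, which is exactly \eqref{comp-adjoint-parap}; the $L^2$ bound is dual to $(i)$; and the local bound is trivially zero, because $\langle 1_{Q_0},\psi_Q\rangle=\int\psi_Q\,d\mu=0$ for every $Q\in\dd(Q_0)$. The $L^p$ conclusions in both parts follow by interpolation with the $L^2$ estimate.

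For the converse in $(ii)$ I will use a single-atom test: fix $Q_0\in\dd$ with $\mu(Q_0)>0$, set $\gamma_Q=\mu(Q_0)^{1/2}\delta_{Q,Q_0}$ (so $\|\gamma\|_{\car}=1$), choose a dyadic child $E$ of $Q_0$ on which $|\psi_{Q_0}|$ equals $M:=\|\psi_{Q_0}\|_{L^\infty(\mu)}$, and let $f=\sgn(\psi_{Q_0}|_E)\,1_E/\mu(E)$, so that $\|f\|_{L^1(\mu)}=1$ and $\Pi_\gamma^* f=\mu(Q_0)^{-1/2}M\,1_{Q_0}$; the uniform weak-type $(1,1)$ hypothesis then forces $M\,\mu(Q_0)^{1/2}\lesssim 1$, which is \eqref{comp-adjoint-parap}. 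For the weak-type $(p,p)$ converse in $(iii)$ the same $\gamma$ and a duality argument give $\|\Pi_\gamma^*\|_{L^p(\mu)\to L^{p,\infty}(\mu)}\gtrsim\mu(Q_0)^{1/p-1/2}\|h_{Q_0}\|_{L^{p'}(\mu)}\approx(\mu(Q_0)/m(Q_0))^{1/p-1/2}$ via \eqref{hI-Lp}; since $1/p-1/2>0$ this forces $\mu(Q_0)\lesssim m(Q_0)\approx\min(\mu((Q_0)_-),\mu((Q_0)_+))$, i.e.\ dyadic doubling. The remaining statements in $(iii)$ are then immediate: in $\R$ with $\Psi=\mathcal{H}$, identity \eqref{hI:1-infty} shows that \eqref{comp-adjoint-parap} is \emph{equivalent} to dyadic doubling.

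Finally, $(iv)$ is a direct translation of $(i)$--$(iii)$ via the correspondence $\gamma_Q=\langle\rho,\theta_Q\rangle$ established in the discussion preceding the theorem: one direction is orthogonality ($\|\gamma\|_{\car}\le\|\rho\|_{\BMO_\dd(\mu)}$), and the other is the explicit reconstruction culminating in \eqref{eqn:norm-L2-rho}, which produces from any $\gamma\in\car$ and any choice of Haar system $\Theta$ a $\rho\in\BMO_\dd(\mu)$ with $\|\rho\|_{\BMO_\dd(\mu)}\lesssim\|\gamma\|_{\car}$. The main obstacle I anticipate is purely technical: tracking the sharp constant $C_0\le 288$ in $(i)$ demands careful bookkeeping via Remark \ref{remark:HS-NC-constant}, and in particular rests on the observation that the analogue of the $S_3$-term in the proof of Theorem \ref{thm:HS-general:non-cancel} vanishes when $r=0$, since a non-zero contribution there would simultaneously require $R\subset Q_j$ and $Q\supsetneq Q_j$, which is impossible when $r=s=0$.
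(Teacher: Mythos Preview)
Your sketch is correct and follows the same strategy as the paper: cast $\Pi_\gamma$ and $\Pi_\gamma^*$ as non-cancellative Haar shifts of complexity $(0,0)$, verify the hypotheses of Theorem~\ref{thm:HS-general:non-cancel} (the $\Xi$ condition, the global $L^2$ bound via Carleson embedding, and the restricted local $L^2$ bound), and use single-atom Carleson sequences as tests for the converses. Your observation that $(\Pi_\gamma^*)^{Q_0}(1_{Q_0})\equiv 0$ is a nice shortcut for the local bound in part~$(ii)$; the paper instead invokes $(\Pi_\gamma^*)^{Q_0}=(\Pi_\gamma^{Q_0})^*$ and Lemma~\ref{lemma:L2-parpap}. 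One small caution for part~$(iv)$: the general reconstruction in the discussion before the theorem yields $\rho$ with $\|\rho\|_{\BMO}\le\|\gamma\|_{\car}$, but it is not immediately clear that it recovers $\langle\rho,\theta_Q\rangle=\gamma_Q$ for \emph{every} $Q$; for the converse of $(ii)$ in the BMO formulation the paper bypasses this by taking $\Theta=\Psi$ and $\rho=\sqrt{\mu(Q_0)}\,\psi_{Q_0}$ directly, which transparently gives the single-atom sequence by orthonormality.
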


Before starting the proof, let us state the $L^2(\mu)$ boundedness of the paraproduct (and its adjoint) along with the corresponding restricted local  boundedness as a lemma:

\begin{lemma}\label{lemma:L2-parpap}
Under the assumptions of Theorem \ref{theor:parap}, for every $\gamma\in \car$ we have
\begin{equation}\label{L2-parpap}
\|\Pi_\gamma f\|_{L^2(\mu)}
\le
2\,\|\gamma\|_{\car}\,\|f\|_{L^2(\mu)}.
\end{equation}
Moreover, for every $Q_0\in\dd$ we obtain
\begin{equation}\label{L2-parpap-local}
\|\Pi_\gamma^{Q_0} f\|_{L^2(\mu)}
\le
2\,\|\gamma\|_{\car}\,\|f\|_{L^2(\mu)},
\quad
\Pi_\gamma^{Q_0} f
=
\!\!\!\sum_{Q \in \dd(Q_0)}\!\!\! \gamma_Q\,\langle f \rangle_Q \psi_{Q}.
\end{equation}
\end{lemma}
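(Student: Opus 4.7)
The plan is to reduce the bound to two classical dyadic facts: Carleson's embedding theorem, and Doob's $L^2$ inequality for the dyadic maximal operator $M_\dd$.

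First I would exploit the orthogonality of the generalized Haar system $\Psi$. Since $\gamma_Q=0$ whenever $\mu(Q)=0$, and $\psi_Q\equiv 0$ for $Q\in\dd\setminus\dd_\Psi$, only the cubes in $\dd_\Psi$ contribute, and there $\{\psi_Q\}_{Q\in\dd_\Psi}$ is orthonormal. Thus
$$
\|\Pi_\gamma f\|_{L^2(\mu)}^2
=
\sum_{Q\in\dd_\Psi} |\gamma_Q|^2\,|\langle f\rangle_Q|^2
\le
\sum_{Q\in\dd,\,\mu(Q)>0} |\gamma_Q|^2\,|\langle f\rangle_Q|^2.
$$

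Next I would prove the Carleson embedding bound
$$
\sum_{Q\in\dd,\,\mu(Q)>0} |\gamma_Q|^2\,|\langle f\rangle_Q|^2
\le
\|\gamma\|_{\car}^2\,\|M_\dd f\|_{L^2(\mu)}^2.
$$
The key observation is that $|\langle f\rangle_Q|\le M_\dd f(x)$ for every $x\in Q$, so the set $\{Q:|\langle f\rangle_Q|>t\}$ consists of cubes contained in $\Omega_t=\{M_\dd f>t\}$. Using the layer-cake formula $|\langle f\rangle_Q|^2=\int_0^\infty 2t\,1_{\{|\langle f\rangle_Q|>t\}}\,dt$, covering $\Omega_t$ by its maximal dyadic cubes $\{Q_j^t\}_j$ (which are pairwise disjoint and whose union is $\Omega_t$), and applying the Carleson condition on each subtree $\dd(Q_j^t)$, I obtain
$$
\sum_{Q:\,|\langle f\rangle_Q|>t} |\gamma_Q|^2
\le
\sum_j \sum_{Q\in\dd(Q_j^t)} |\gamma_Q|^2
\le
\|\gamma\|_{\car}^2\sum_j\mu(Q_j^t)
=
\|\gamma\|_{\car}^2\,\mu(\Omega_t).
$$
Integrating in $t$ gives the displayed Carleson embedding.

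Finally, Doob's $L^2$ inequality, $\|M_\dd f\|_{L^2(\mu)}\le 2\,\|f\|_{L^2(\mu)}$, yields \eqref{L2-parpap} with constant $2$. The local estimate \eqref{L2-parpap-local} is immediate: if $\gamma'=\{\gamma'_Q\}_{Q\in\dd}$ is defined by $\gamma'_Q=\gamma_Q$ for $Q\in\dd(Q_0)$ and $\gamma'_Q=0$ otherwise, then clearly $\|\gamma'\|_{\car}\le\|\gamma\|_{\car}$ and $\Pi_\gamma^{Q_0}=\Pi_{\gamma'}$, so \eqref{L2-parpap} applied to $\gamma'$ furnishes \eqref{L2-parpap-local}. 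No real obstacle arises: the argument is entirely standard once the Carleson embedding via the maximal operator is put in place, and the only care needed is the book-keeping of degenerate cubes where $\mu(Q)=0$ or $\psi_Q\equiv 0$, which contribute nothing to either side.
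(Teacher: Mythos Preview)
Your approach is essentially identical to the paper's: orthogonality of $\Psi$, layer-cake on $|\langle f\rangle_Q|^2$, Carleson packing on the maximal subtrees of $\Omega_t$, then Doob's $L^2$ bound for $M_\dd$; the reduction of \eqref{L2-parpap-local} to \eqref{L2-parpap} via truncation of $\gamma$ is also exactly what the paper does.

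There is one point you gloss over that the paper treats carefully. When you write ``covering $\Omega_t$ by its maximal dyadic cubes $\{Q_j^t\}_j$'', this step requires that such maximal cubes exist, which in turn needs $\langle|f|\rangle_Q\to 0$ as $\ell(Q)\to\infty$. Under the hypotheses of Theorem~\ref{theor:parap} the measure $\mu$ is only assumed to satisfy $\mu(Q)<\infty$ for all $Q\in\dd$; if some quadrant $\R^d_k$ has $\mu(\R^d_k)<\infty$, then for $0<t\le\langle|f|\rangle_{\R^d_k}$ the averages $\langle|f|\rangle_Q$ converge to $\langle|f|\rangle_{\R^d_k}>t$ along any increasing chain in $\dd(\R^d_k)$, so no maximal cube exists and your covering fails. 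The paper handles this by first proving the estimate under the extra assumption $\mu\in\mathcal{M}_\infty$, and then, in the general case, splitting the $t$-integral at $\langle|f|\rangle_{\R^d_k}$: for $t>\langle|f|\rangle_{\R^d_k}$ maximal cubes exist and your argument runs; for $0<t\le\langle|f|\rangle_{\R^d_k}$ one bounds $\sum_{Q\in\dd(\R^d_k)}|\gamma_Q|^2$ directly by $\|\gamma\|_{\car}^2\,\mu(\R^d_k)$ using an exhaustion of $\R^d_k$ by dyadic cubes. This is a routine fix, but your claim that ``no real obstacle arises'' is not quite right.
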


\begin{proof}
We claim that it suffices to obtain \eqref{L2-parpap}. Indeed, we consider a new sequence $\widetilde{\gamma}=\{\widetilde{\gamma}_Q\}_{Q\in\dd}$ with $\widetilde{\gamma}_Q=\gamma_Q$ if $Q\in\dd(Q_0)$ and $\widetilde{\gamma}_Q=0$ otherwise. We clearly have that $\widetilde{\gamma}\in\car$ with $\|\widetilde{\gamma}\|_{\car}\le \|\gamma\|_{\car}$ and also $\Pi_\gamma^{Q_0}=\Pi_{\widetilde{\gamma}}$. Thus, \eqref{L2-parpap} applied to $\widetilde{\gamma}$ implies \eqref{L2-parpap-local}.

We obtain \eqref{L2-parpap} using ideas from \cite{pereyra2000}. Let us first suppose that $\mu\in\mathcal{M}_\infty$. The argument is somehow standard, but, since our setting is very general, we give the argument for completeness. Given $f\in L^2(\mu)$ and $\lambda>0$, as in Theorem \ref{theor:CZ-d}, we can find a maximal collection of dyadic cubes $\{Q_j^\lambda\}_j$ such that $\Omega_\lambda=\cup_{j} Q_j^\lambda$. We notice that the existence of such maximal cubes follows from the fact that $\langle|f|\rangle_Q\le \langle|f|^2\rangle_Q^{1/2}\to 0$ as $\ell(Q)\to\infty$, given our current assumption $\mu\in \mathcal{M}_\infty$. Next we use that $\Psi$ is cancellative, therefore orthogonal,
\begin{multline}\label{trick-parap}
\|\Pi_\gamma f\|_{L^2(\mu)}^2
=
\sum_{Q\in\dd}
|\gamma_Q|^2\,\big|\langle f \rangle_Q\big|^2 \|\psi_{Q}\|_{L^2(\mu)}^2
\\
\le
\int_0^\infty \sum_{Q \in \dd} 1_{\{\langle|f|\rangle_Q>\lambda \}}(\lambda)\,|\gamma_Q|^2\, 2\,\lambda\, d\lambda
\leq \int_0^\infty \sum_j\!\sum_{Q \in \dd(Q_j^{\lambda})}\!\! |\gamma_Q|^2\, 2\lambda\, d\lambda.
\\
\leq
\|\gamma\|_{\car}^2\int_0^\infty \sum_j\mu(Q_j^\lambda)\, 2\lambda\, d\lambda
\le
\|\gamma\|_{\car}^2
\int_0^\infty  \,\mu(\Omega_\lambda)\,2\,\lambda\, d\lambda
\\
=
\|\gamma\|_{\car}^2\,\|M_\dd f\|_{L^2(\mu)}^2
\le
4\,\|\gamma\|_{\car}^2\,\|f\|_{L^2(\mu)}^2,
\end{multline}
and this completes the proof of the fact that $\Pi_\gamma$ is bounded on $L^2(\mu)$ provided $\mu\in\mathcal{M}_\infty$. To consider the general case, as before we may suppose that $\supp f\subset \R^d_k$, $1\le k\le 2^d$ with $\mu(\R^d_k)<\infty$. In \eqref{trick-parap} we split the integral in two: $0<\lambda\le \langle|f|\rangle_{\R^d_k}$ and $\lambda> \langle|f|\rangle_{\R^d_k}$. In the second case we can find the maximal cubes $\{Q_j^\lambda\}$ and the previous argument goes through. Let us next consider the integral in the range  $0<\lambda\le \langle|f|\rangle_{\R^d_k}$. Let $\{Q_n\}_{n\ge 1}\subset\dd(\R^d_k)$ be an increasing sequence such that $\cup_n Q_n=\R^d_k$. Then,
we proceed as above
\begin{multline*}
\int_0^{\langle|f|\rangle_{\R^d_k}} \sum_{Q \in \dd(\R^d_k)} 1_{\{\langle|f|\rangle_Q>\lambda \}}(\lambda)\,|\gamma_Q|^2\, 2\,\lambda\, d\lambda
\le
\langle|f|\rangle_{\R^d_k}^2\,\sup_n\sum_{Q \in \dd(Q_n)}|\gamma_Q|^2
\\
\le
\|\gamma\|_{\car}^2\,
\langle|f|^2\rangle_{\R^d_k}\,
\sup_n \mu(Q_n)
=
\|\gamma\|_{\car}^2\,\|f\|_{L^2(\mu)}^2.
\end{multline*}
This completes the proof of \eqref{L2-parpap}.
\end{proof}

\begin{proof}[Proof of Theorem \ref{theor:parap}]
We start with $\Pi_{\gamma}$. Set $\widetilde{\phi}_Q=1_Q/\mu(Q)^{1/2}$ if $\mu(Q)>0$ and $\widetilde{\phi}_Q=0$ otherwise and consider the non-cancellative generalized Haar system $\widetilde{\Phi}=\{\widetilde{\phi}_Q\}_{Q\in\dd}$. As explained above, in the notation of Theorem \ref{thm:HS-general:non-cancel}, $\Pi_\gamma$ is a non-cancellative Haar shift operator of complexity $(0,0)$ with respect to the systems $\widetilde{\Phi}$ and $\widetilde{\Psi}=\Psi$. By Lemma \ref{lemma:L2-parpap} we have the required $L^2(\mu)$ bounds in Theorem \ref{thm:HS-general:non-cancel}. Thus the weak-type $(1,1)$ (and by interpolation the boundedness on $L^p(\mu)$, $1<p<2$) of $\Pi_\gamma$ follows from the property
$\Xi(\widetilde{\Phi},\Psi; 0,0)<\infty$. But this is in turn trivial: by H\"older's inequality we have for every $Q\in\dd_\Psi$
$$
\|\widetilde{\phi}_Q\|_{L^\infty(\mu)}\|\psi_Q\|_{L^1(\mu)}
=
\mu(Q)^{-\frac12}\|\psi_Q\|_{L^1(\mu)}
\le
\|\psi_Q\|_{L^2(\mu)}
=1.
$$
This completes the proof of $(i)$. For the boundedness constant we can use Remark \ref{remark:HS-NC-constant} along with Lemma \ref{lemma:L2-parpap} to obtain the linear dependence on  $\|\gamma\|_{\car}$.

We now turn to $(ii)$. We have shown that $\Pi_\gamma$ is bounded on $L^2(\mu)$ and so is its adjoint $\Pi_\gamma^*$. Notice that $(\Pi_\gamma^*)^{Q_0}=(\Pi_\gamma^{Q_0})^*$ and therefore $(\Pi_\gamma^*)^{Q_0}$ satisfies \eqref{L2-parpap-local}. Then, we apply again Theorem \ref{thm:HS-general:non-cancel} to $\Pi_\gamma^*$ which is a non-cancellative Haar shift operator of complexity $(0,0)$ with respect to the non-cancellative generalized Haar systems $\Psi$, $\widetilde{\Phi}$. Thus, $\Xi(\Psi, \widetilde{\Phi}; 0,0)<\infty$, which coincides with \eqref{comp-adjoint-parap}, implies that $\Pi_\gamma^*$ is of weak-type $(1,1)$ . The linear dependence on $\|\gamma\|_{\car}$  uses the same argument as above. Let us now obtain the converse. Notice that in \eqref{comp-adjoint-parap} we can restrict the supremum to $Q\in\dd_\Psi$ and in particular $\mu(Q)>0$. Fix one of these cubes $Q_0$ and let $\gamma_Q=\delta_{Q,Q_0}\,\sqrt{\mu(Q_0)}$. Then, $\gamma\in\car$ with $\|\gamma\|_{\car}=1$. Take
$$
f=\sgn\big(\psi_{Q_0}(x)\big)\frac{1_{Q_{0,\infty}}(x)}{\mu(Q_{0,\infty})},
$$
where $Q_{0,\infty}\in\dd_1(Q_0)$ is a cube where $\psi_{Q_0}$ attains its maximum. Then, as in the proof of Theorem \ref{thm:HS-general} and using that $\Pi_\gamma^*$ is of weak-type $(1,1)$ with uniform constant (since $\|\gamma\|_{\car}=1$) we obtain
\begin{multline*}
\|\psi_{Q_0}\|_{L^\infty(\mu)}\,\sqrt{\mu(Q_0)}
=
\Big\|\langle f, \psi_{Q_0}\rangle\,\frac{1_{Q_0}}{\sqrt{\mu(Q_0)}}\Big\|_{L^{1,\infty}(\mu)}
=
\|\Pi_\gamma^* f\|_{L^{1,\infty}(\mu)}
\\
\le
C\,\|f\|_{L^1(\mu)}
=
C.
\end{multline*}
Repeating this for every $Q_0\in\dd_\Psi$ we obtain \eqref{comp-adjoint-parap} as desired.

To complete the proof of $(ii)$ we first observe that for $p\ge 2$, duality and $(i)$ give the
$L^p(\mu)$ boundedness of $\Pi_\gamma^*$ with no further assumption on $\mu$. For $1<p<2$, assuming \eqref{comp-adjoint-parap}, we already know that $\Pi_\gamma^*$ is of weak-type $(1,1)$. The desired estimates now follow by interpolation with the $L^2(\mu)$ bound from Lemma \ref{lemma:L2-parpap}.

To obtain $(iii)$ we apply $(i)$ and $(ii)$ and observe that \eqref{comp-adjoint-parap} can be written as
$$
\sup_{I\in\dd} \frac{\sqrt{\mu(I)}}{\sqrt{\min\{\mu(I_-),\mu(I_+)\}}}
\approx
\sup_{I\in\dd} \frac{\sqrt{\mu(I)}}{\sqrt{m(I)}}<\infty,
$$
which in turn is equivalent to the fact that $\mu$ is dyadically doubling. To complete the proof of $(iii)$ it remains to show that if $\Pi_\gamma^*$ is of weak type $(p,p)$ for some $1<p<2$ then $\mu$ is dyadically doubling. Fix then $1<p<2$ and $I_0\in\dd$. Let $\gamma_I=\delta_{I,\widehat{I}_0}\,\sqrt{\mu(\widehat{I}_0)}$ and observe that $\gamma\in\car$ with $\|\gamma\|_{\car}=1$. Taking $f=h_{\widehat{I}_0}$, by \eqref{hI-Lp} we have
\begin{multline*}
\mu(\widehat{I}_0)^{\frac1p-\frac12}
=
\Big\|\gamma_{\widehat{I}_0}\,\frac{1_{\widehat{I}_0}}{\mu(\widehat{I}_0)}\Big\|_{L^{p,\infty}(\mu)}
=
\|\Pi_\gamma^* f\|_{L^{p,\infty}(\mu)}
\\
\le
C\,\|f\|_{L^p(\mu)}
\approx m(\widehat{I}_0)^{\frac12-\frac1{p'}}
=
m(\widehat{I}_0)^{\frac1p-\frac1{2}}
\le
\mu(I_0)^{\frac1p-\frac1{2}}.
\end{multline*}
This estimate holds for every $I_0\in\dd$ and therefore $\mu$ is dyadically doubling as desired.

We finally show $(iv)$. As observed before if we set $\gamma_Q=\langle\rho,\theta_Q\rangle$ with $\rho\in\BMO(\mu)$ and $\Theta=\{\theta_Q\}_{Q\in\dd}$ being a generalized Haar system we have that $\gamma\in\car$ with $\|\gamma\|_{\car}\le \|\rho\|_{\BMO(\mu)}$. Therefore the only assertion that is not contained in the previous items is the converse implication in $(ii)$. As before, in \eqref{comp-adjoint-parap}, we can restrict the supremum to $Q\in\dd_\Psi$ and in particular $\mu(Q)>0$. Fix one of these cubes $Q_0$, take $\Theta=\Psi$ and let $\rho=\psi_{Q_0}\,\sqrt{\mu(Q_0)}$. Then,
$$
\|\rho\|_{\BMO_\dd(\mu)}^2 = \sup_{Q_0\subset Q \in \dd} \frac1{\mu(Q)}\int_Q \big|\rho - \langle \rho \rangle_Q \big|^2 \,d\mu
=
\sup_{Q_0\subset Q \in \dd} \frac{\mu(Q_0)}{\mu(Q)}
=1.
$$
We take the same function $f$ as in $(ii)$, use that $\Pi_\rho^*$ is of weak-type $(1,1)$ with uniform constant (since $\|\rho\|_{\BMO_\dd(\mu)}=1$) and obtain
\begin{multline*}
\|\psi_{Q_0}\|_{L^{\infty}(\mu)}\,\mu(Q_0)^{\frac12}
=
\Big\|\langle f, \psi_{Q_0}\rangle\,\frac{1_{Q_0}}{\sqrt{\mu(Q_0)}}\Big\|_{L^{1,\infty}(\mu)}
=
\|\Pi_\rho^* f\|_{L^{1,\infty}(\mu)}
\\
\le
C\,\|f\|_{L^1(\mu)}
=C.
\end{multline*}
Repeating this for every $Q_0\in\dd_\Psi$ we obtain as desired \eqref{comp-adjoint-parap}. This completes the proof of $(iv)$.
\end{proof}

\subsection{On the probabilistic approach} \label{Paragraph-Probability}

We shall work with a fixed Borel measure $\mu$ on $\mathbb{R}^d$ such that $\mu(Q) < \infty$ for every dyadic cube $Q$.
The dyadic system $\mathscr{D} = (\mathscr{D}_k)_{k \in \mathbb{Z}}$ is a filtration on $\mathbb{R}^d$. The conditional expectation operator $\mathrm{E}_k$ associated to $\mathscr{D}_k$ is defined by
\begin{equation*}
\mathrm{E}_{k} f(x) = \sum_{Q \in \mathscr{D}_k}\mathrm{E}_Q f(x) = \sum_{Q \in \mathscr{D}_k} \langle f \rangle_Q 1_Q(x),
\end{equation*}
where $\langle f \rangle_Q = 0$ if $\mu(Q) = 0$. The martingale difference operators $\mathrm{D}_k$ are given by $\mathrm{D}_k = \mathrm{E}_k - \mathrm{E}_{k-1}$. It is clear from the definitions that the operators $\mathrm{E}_k$ form an increasing family projections that preserve integrals and that $\mathrm{D}_k$ are orthogonal projections. Thus, if $f \in L^p(\mu)$, $1\le p<\infty$, the sequence $(\mathrm{E}_k f)_{k \in \mathbb{Z}}$ is an $L^p$-martingale and
\begin{equation}
\label{eq:ortdiffn}
f(x) = \sum_{k \in \mathbb{Z}} \mathrm{D}_k f + \mathrm{E}_{-\infty}f = \sum_{n > k} \mathrm{D}_n f + \mathrm{E}_k f,
\end{equation}
where the convergence is in $L^p(\mu)$ and $\mu$-almost everywhere, and where $\mathrm{E}_{-\infty}f= \sum_{j=1}^{2^d} \langle f \rangle_{\mathbb{R}^d_j}1_{\mathbb{R}^d_j}$.
Let $Q \in \mathscr{D}_{k-1}$ and denote by $\mathrm{D}_Q$ the projection
$$
D_Q f(x) = \mathrm{D}_kf(x)\,1_Q(x) = \Big(\sum_{Q' \in \mathscr{D}_1(Q)} \mathrm{E}_{Q'}f(x)\Big) - \mathrm{E}_Qf(x).
$$
Hence $\mathrm{D}_k = \sum_{Q \in \mathscr{D}_{k-1}}\mathrm{D}_Q$. Observe that we may set $\mathrm{D}_Q f \equiv 0$ if $\mu(Q) = 0$. We easily obtain that  $\phi \in \mathrm{D}_Q(L^2(\mu))$ (by this we mean the image of $L^2(\mu)$ by the operator $\mathrm{D}_Q$) if and only if $\phi$ is supported on $Q$, constant on dyadic subcubes of $Q$, and has vanishing $\mu$-integral. In such a case we may write
\begin{equation}
\label{eq:DQL2}
\phi(x) = \sum_{Q' \in \mathscr{D}_1(Q)} a_{Q'}\frac{1_{Q'}(x)}{\mu(Q')},
\end{equation}
with $\sum_{Q' \in \mathscr{D}_1(Q)}a_{Q'} = 0$, and where it is understood that $a_Q'=0$ if $\mu(Q')=0$ and we use the standard convention that $0\cdot\infty=0$. Hence, $\mathrm{D}_Q(L^2(\mu))$ is a vector space of dimension at most $2^d-1$.

If we are in dimension $d=1$ and $I\in\mathscr{D}$ satisfies $\mu(I)>0$, then $h_I\in \mathrm{D}_I(L^2(\mu))$ (since $\mathrm{D}_Ih_I=h_I$). Note that in such a case $\mathrm{D}_I(L^2(\mu))$ is $1$-dimensional and therefore $\mathrm{D}_I f = \langle f,h_I \rangle h_I$, for every $f\in L^2(\mu)$.

In the higher dimensional case, assume for simplicity that $\mu(Q)>0$ for every $Q\in\mathscr{D}$. Let us consider the Wilson's Haar system $\{h_Q^\omega: \omega\in\mathscr{W}(Q),Q\in\mathscr{D}\}$. By othonormality of the Wilson's Haar system and the fact that the cardinality of $\mathscr{W}(Q)$ is $2^d-1$ we immediately  obtain that  $\{h_Q^\omega: \omega\in\mathscr{W}(Q)\}$ is an orthonormal basis  of $\mathrm{D}_Q(L^2(\mu))$. Thus,
$$
\mathrm{D}_Qf = \sum_{\omega \in \mathscr{W}(Q)} \langle f, h_Q^\omega \rangle h_Q^\omega,
\qquad
f\in L^2(\mu).
$$
The same can be done with Mitrea's Haar system (see above), in which case we obtain
$$
\mathrm{D}_Qf = \sum_{j=1}^{2^d-1} \langle f, H_Q^j \rangle H_Q^j,
\qquad
f\in L^2(\mu).
$$
Finally, if $\mu=\mu_1\times\dots\times\mu_d$ with $\mu_j$ Borel measures in $\R$ such that $0<\mu_j(I)<\infty$ for every $I\in\mathscr{D}(\R)$ and we consider the Haar system in the spirit of the wavelet construction $\{\phi_Q^\epsilon: \epsilon\in\{0,1\}^d\setminus\{0\}^d, Q\in\mathscr{D}\}$ we analogously have
$$
\mathrm{D}_Qf = \sum_{\epsilon\in\{0,1\}^d\setminus\{0\}^d} \langle f, \phi_Q^\epsilon \rangle \phi_Q^\epsilon,
\qquad
f\in L^2(\mu).
$$

We next see that martingale transforms can be written as Haar multipliers (i.e., Haar shifts of complexity $(0,0)$). A martingale transform is defined as
$$
Tf(x)=\sum_{k\in\Z} \xi_k(x)\,\mathrm{D}_k f(x)
$$
where the sequence $\{\xi_k\}_{k\in\Z}$ is predictable with respect to the dyadic filtration $(\mathscr{D}_k)_{k \in \mathbb{Z}}$, that is, $\xi_k$ is  $\sigma(\mathscr{D}_{k-1})$-measurable. Then $\xi_k$ is constant on the cubes $Q \in \mathscr{D}_{k-1}$. Namely, $\xi_k(x) = \sum_{Q \in \mathscr{D}_{k-1}}\alpha_Q 1_Q(x)$. Thus, by definition of the projections $\mathrm{D}_Q$ we get then that the martingale transform defined by $\{\xi_k\}_{k\in\Z}$ can be equivalently written as
\begin{equation}\label{eqn:martin-transform-hI}
Tf(x) = \sum_{k \in \mathbb{Z}}\sum_{Q \in \mathscr{D}_{k-1}}\alpha_Q\,\mathrm{D}_Q f(x)
=
\sum_{Q \in \mathscr{D}} \alpha_Q \Biggl(\,\sum_{j=1}^{2^{d}-1} \langle f,\psi_Q^{j} \rangle \psi_Q^{j}(x)\Biggr),
\end{equation}
with $\{\psi_Q^{j}\}_{1\le j\le 2^{d}-1}$ being any orthonormal basis of $\mathrm{D}_Q(L^2(\mu))$. Thus, every martingale transform can be represented as a sum of $2^{d}-1$ Haar multipliers, i.e., a Haar shift operators of complexity $(0,0)$ (see Example \ref{ex:Haar-mult}). Note that each Haar shift operator in the sum is written in terms of the system $\{\psi_Q^{j_Q}\}_{Q\in\mathscr{D}}$ where for each $Q\in\mathscr{D}$ we chose $j_Q$ with $1\le j_Q\le 2^d-1$.

It is easy to see that any orthonormal basis $\{\psi_Q^j\}_{1 \leq j \leq 2^d -1}$ of $\mathrm{D}_Q(L^2(\mu))$ is also a basis of $\mathrm{D}_Q(L^p(\mu))$ for $1 \leq p \leq \infty$. Assuming further that $\mu \in \mathcal{M}_\infty$, \eqref{eq:ortdiffn} says that $\{\psi_Q^j\}_{1 \leq j \leq 2^d -1, Q \in \mathscr{D}}$ is a basis of $L^p(\mu)$, $1 \leq p < \infty$. However, in view of \eqref{eqn:martin-transform-hI}, Burkholder's theorem of $L^p$ boundedness of martingale transforms, $1 <p < \infty$, does not suffice to show that a given Haar basis is unconditional in $L^p(\mu)$. In fact, unconditionality of a Haar basis is not true in general. We take the last example in Section \ref{section-Haar-systems} of a non-standard generalized Haar system and the Haar multiplier in \eqref{HM-non-stan}. We can easily see that for every $1<p<2$,
\begin{multline*}
\frac{\|T_\epsilon \varphi_{Q_k}\|_{L^{p}(\mu)}}{\|\varphi_{Q_k}\|_{L^p(\mu)}}
=
\frac{k\,\|\phi_{Q_k}\|_{L^{p}(\mu)}}{2\,\mu(Q_k^1)^{\frac1p-1}}
=
\frac{k}{2\,k^{2\,\frac{p-1}p}}\,\bigg(\frac{k^{p-2}}{2^{p-1}}+\bigg(\frac{k^2}{k^2-2}\bigg)^{\frac{p}2-1}\,2^{-\frac{p}2}\bigg)^\frac1p
\\
\gtrsim k^{\frac{2-p}{p}}
\underset{k \to \infty}{\longrightarrow} \infty.
\end{multline*}
Also, if we now take $\widetilde{\varphi}_{Q_k}=1_{Q_k^3}/\mu(1_{Q_k^3})$ then, for $2<p<\infty$,
\begin{multline*}
\frac{\|T_\epsilon \widetilde{\varphi}_{Q_k}\|_{L^{p}(\mu)}}{\|\widetilde{\varphi}_{Q_k}\|_{L^p(\mu)}}
=
\bigg(\frac{k^2}{2\,(k^2-2)}\bigg)^{\frac12}
\frac{\|\phi_{Q_k}\|_{L^{p}(\mu)}}{\mu(Q_k^3)^{\frac1p-1}}
\\
=
2^{\frac1p-\frac32}\bigg(\frac{k^2-2}{k^2}\bigg)^{\frac12-\frac1p}\,
\bigg(\frac{k^{p-2}}{2^{p-1}}+\bigg(\frac{k^2}{k^2-2}\bigg)^{\frac{p}2-1}\,2^{-\frac{p}2}\bigg)^\frac1p
\gtrsim k^{\frac{p-2}{p}}
\underset{k \to \infty}{\longrightarrow} \infty.
\end{multline*}
These imply that $\Phi=\{\phi_{Q_k}\}_{k\ge 2}$ is not an unconditional basis (on its span) on $L^p(\mu)$ for $1<p<\infty$ with $p\neq 2$.

Nevertheless, the standardness property
\begin{equation*}
\sup_{1 \leq j \leq 2^d -1}\sup_{Q \in \mathscr{D}} \|\psi_Q^j\|_{L^1(\mu)}\|\psi_Q^j\|_{L^\infty(\mu)}<\infty,
\end{equation*}
implies, by Theorem \ref{thm:HS-general}, that every Haar multiplier is of weak type $(1,1)$ and, by interpolation and duality, $L^p(\mu)$ bounded for every $1<p<\infty$.  This, in turn, gives that $\{\psi_Q^j\}_{1 \leq j \leq 2^d -1, Q \in \mathscr{D}}$ is an unconditional  basis for $L^p(\mu)$, $1<p<\infty$.

Let us now look at the case of the dyadic Hilbert transform an its adjoint in dimension $d=1$. Assume that $\mu(I)>0$ for every $I\in\mathscr{D}$. One can easily see that
$$
h_{I_\pm}(x) =  \mp\frac{\mu(I_\pm)}{\sqrt{m(I)}} h_{I_\pm}(x) h_I(x).
$$
Hence,
\begin{align*}
&H_\mathscr{D} f(x) = \sum_{I \in \mathscr{D}} \langle f,h_I \rangle \bigl(h_{I_-}(x) - h_{I_+}(x)\bigr) \\
&\qquad= \sum_{I \in \mathscr{D}} \frac{1}{\sqrt{m(I)}} \bigl(\mu(I_-) h_{I_-}(x) + \mu(I_+) h_{I_+}(x)\bigr) \langle f,h_I \rangle h_I(x) \\
&\qquad= \sum_{k \in \mathbb{Z}} \Biggl(\,\sum_{I \in \mathscr{D}_{k-1}} \frac{\mu(I_-) h_{I_-}(x) + \mu(I_+) h_{I_+}(x)}{\sqrt{m(I)}}\Biggr)\Biggl(\,\sum_{J \in \mathscr{D}_{k-1}}\langle f,h_J \rangle h_J(x)\Biggr) \\
&\qquad= \sum_{k \in \mathbb{Z}} \xi_{k}(x) \mathrm{D}_k f(x),
\end{align*}
where we have used that $\mathrm{D}_k = \sum_{I \in \mathscr{D}_{k-1}}\mathrm{D}_I$ and that $\mathrm{D}_I f = \langle f,h_I \rangle h_I$. The coefficient $\xi_{k}$ is $k+1$-measurable, defining a non predictable sequence. One may thus regard the dyadic Hilbert transform as a ``generalized martingale transform''.  Let us finally observe that for the adjoint of the Hilbert transform, since $\mathrm{D}_k$ is a projection, we have
\begin{equation*}
H_\mathscr{D}^*f(x) = \sum_{k \in \mathbb{Z}}  \mathrm{D}_k \bigl(\xi_{k}f\bigr)(x).
\end{equation*}
Similar expressions can be obtained for other Haar shift operators in every dimension provided the coefficients can be split as $\alpha_{R,S}^Q=\gamma_R^Q\,\beta_S^Q$. This procedure shows that Haar shift operators of arbitrary complexity ``fill'' the space of ``martingale transforms'' with arbitrary measurable coefficients,
further details are left to the interested reader. In particular, we see why classical tools coming from martingale $L^p$-theory do not apply in the present contexts, and our Calder\'on-Zygmund decomposition establishes the right substitute of Gundy's martingale decomposition in such a general setting.


\let\oldthebibliography=\thebibliography
  \let\endoldthebibliography=\endthebibliography
  \renewenvironment{thebibliography}[1]{\begin{oldthebibliography}{#1}\setlength{\parskip}{.1cm}\setlength{\itemsep}{0ex}}{\end{oldthebibliography}}

\end{document}